\newtheorem{corollary}{Corollary}
\newtheorem{proposition}{Proposition}
\newtheorem{remark}{Remark}
\newtheorem{theorem}{Theorem}
\numberwithin{equation}{section}
\begin{document}

\title{Reversible part of a quantum dynamical system}

\author{Carlo Pandiscia}

\maketitle

\begin{abstract}
\indent 
In this work a quantum dynamical system $(\mathfrak M,\Phi, \varphi)$ is constituted by a von Neumann algebra $\mathfrak M$, by a unital Schwartz map $\Phi:\mathfrak{M\rightarrow M}$ and  by a $\Phi$-invariant normal faithful state $\varphi$ on $\mathfrak M$. 
We prove that the ergodic properties of a quantum dynamical system, are determined by its reversible part  $(\mathfrak{D}_\infty,\Phi_\infty, \varphi_\infty)$. It is constituted by a von Neumann sub-algebra $\mathfrak{D}_\infty$ of $\mathfrak M$ by an automorphism $\Phi_\infty$ and a normal state $\varphi_\infty$, the restrictions of $\Phi$ and $\varphi$ on $\mathfrak{D}_\infty$  respectively. 
Moreover, if $\mathfrak{D}_\infty$ is a trivial algebra then the  quantum dynamical system is ergodic. Furthermore we will give some  properties of the reversible part of quantum dynamical system, in particular, we will study its relations with the canonical decomposition of Nagy-Fojas of  linear contraction related to the quantum dynamical system.

\end{abstract}

\section{Preliminares and notations}
 
We consider a pair $(\mathfrak{M},\Phi) $ constituted  by a von Neumann algebra $\mathfrak{M}$ and a unital Schwartz map $\Phi:\mathfrak{M\rightarrow M}$  \textit{i.e.} a $\sigma$-continuous map with $\Phi(1)=1$  which satisfies the inequality: 
\begin{equation}
0 \leq \Phi(a^*)\Phi(a) \leq \Phi(a^*a)   \qquad  a\in\mathfrak{M} 
\label{schrwaz-ine}
\end{equation}
In this work the pair $(\mathfrak{M},\Phi) $  will  be called (discrete)  \textit{quantum process }and $\Phi$ the dynamics of the quantum process.
\\
A normal state $\varphi$ on $\mathfrak{M}$ is a stationary state for the quantum  process $(\mathfrak{M},\Phi)$ if $\varphi(\Phi(a))=\varphi(a)$ for all $a\in\mathfrak{M}$, while is of asymptotic equilibrium if $\Phi^n(a) \rightarrow \varphi(a) 1$ as $n \rightarrow \infty$ in $\sigma$-topology \textit{i.e.}
$$ \lim_{n\rightarrow +\infty} \omega(\Phi^n(a))=\omega (1) \varphi(a) \quad a\in \mathfrak M \quad \omega\in\mathfrak M_{*} $$
We denote with $\mathcal{B}(\mathcal{H})$ the C*-algebra of bounded linear operator on Hilbert space $\mathcal{H}$ and with $s$ and $\sigma$ respectively the ultrastrong operator topology and the ultraweakly operator topology on von Neumann algebra $\mathfrak{M}$ while with $\mathfrak M_{*}$ its predual. Furthermore, a normal map or a normal state are $\sigma$-continuous maps (see ref. \cite{bratteli79}).
\\
We define the \textit{multiplicative domain} $\mathfrak{D}_{\Phi}$  of a Schwartz map (see definition 2.1.4 and proposition 2.1.6 of \cite{stormer})  as follows:
\[
\mathfrak{D}_{\Phi}= \{ a\in\mathfrak{M}: \Phi(a^*a)=\Phi(a^*)\Phi(a) \ \ and \ \  \Phi(aa^*)=\Phi(a)\Phi(a^*) \}
\]
We recall that an element $a\in\mathfrak{D}_{\Phi}$ if and only if $\Phi(ax)=\Phi(a)\Phi(x)$ and $\Phi(xa)=\Phi(x)\Phi(a)$ for all $x\in\mathfrak{M}$.
It follows that $\mathfrak{D}_{\Phi}$ is a von Neumann algebra, since it is a unital *-algebra closed in the $\sigma$-topology.
\smallskip
\\
A consequence of the Schwartz's inequality is the following remark:
\begin{remark}
If $\Phi:\mathfrak{M}\rightarrow\mathfrak{M}$ is a unital Schwartz map which admits an inverse $\Phi^{-1}:\mathfrak{M}\rightarrow\mathfrak{M}$ (\textsl{i.e.} a unital Schwartz map such that $\Phi(\Phi^{-1}(a))=\Phi^{-1}(\Phi(a))=a $ for all $a\in\mathfrak{M}$), then $\Phi$ is an automorphism.
\end{remark}
If $\mathfrak{D}_{\infty}^+$ is the following von Neumann algebras:
\begin{equation}
\label{dpos}
\mathfrak{D}_{\infty}^+=\bigcap_{n\in\mathbb{N}}\mathfrak{D}_{\Phi^n}
\end{equation}
then we have that $\Phi(\mathfrak{D}_{\infty}^+)\subset\mathfrak{D}_{\infty}^+$ and $\Phi$ restricted to $\mathfrak{D}_{\infty}^+$ is a *-homomorphism, but it is not surjective map.
\\
Moreover we have:
$$\mathfrak{D}_{\infty}^+= \left\{a\in\mathfrak D_\Phi: \Phi^n(a)\in\mathfrak D_\Phi \ \textsl{for all} \ n\in\mathbb N \right\} $$
We define the \textit{multiplicative core} of $\Phi$ (see ref. \cite{stormer07}):
$$ \mathcal C_\Phi= \bigcap_{n\in\mathbb{N}} \Phi^n (\mathfrak{D}_{\infty}^+)\subset \mathfrak{D}_{\infty}^+ $$
We have  $\Phi(\mathcal C_\Phi )\subset \mathcal C_\Phi$.
\\
Indeed $\Phi^{n+1} (\mathfrak{D}_{\infty}^+)\subset\Phi^n (\mathfrak{D}_{\infty}^+)$ for all $n\geq0$ and  
$$ \Phi(\bigcap_{n\in\mathbb{N}}\Phi^{n}(\mathfrak{D}_{\infty}^+)) \subset  \bigcap_{n\in\mathbb{N}}\Phi(\Phi^{n}(\mathfrak{D}_{\infty}^+))=\bigcap_{n\in\mathbb{N}}\Phi^{n+1}(\mathfrak{D}_{\infty}^+)=\bigcap_{n\in\mathbb{N}} \Phi^{n}(\mathfrak{D}_{\infty}^+) $$
It is clear that the restriction of $\Phi$ to multiplicative core $\mathcal C_\Phi$ is a *-homomorphism and if $\Phi$ is an injective map on $ \mathfrak{D}_{\infty}^+$, then we have $\Phi( \mathcal C_\Phi)=  \mathcal C_\Phi$, so the restriction of $\Phi$ to the multiplicative core is *-automorphism.
\\
Since $\Phi$ is a normal map and its restriction to $ \mathfrak{D}_{\infty}^+$ is a *-homomorphism, we have that the set $\Phi^n(\mathfrak{D}_{\infty}^+)$ is a von Neumann algebra (see e.g. \cite{bratteli79}), therefore $\mathcal C_\phi$ is a von Neumann algebra. 
\\
\indent
Let $\varphi$ be a stationary state for the quantum processes $(\mathfrak{M},\Phi) $ and $(\mathcal{H}_{\varphi},\pi_{\varphi},\Omega_{\varphi})$ its GNS representation. It is well know (see e.g. \cite{NSZ} ) that there is a unique linear contraction $U_{\Phi,\varphi}$ of $\mathfrak{B}(\mathcal{H}_{\varphi})$ such that, for any $a\in\mathfrak{A}$, we have
\begin{equation}
U_{\Phi,\varphi}\pi_{\varphi}(a)\Omega_{\varphi}=\pi_{\varphi}(\Phi(a))\Omega_{\varphi} 
\label{contrazione 1}
\end{equation}
Furthermore if $\varphi$ is a faithful state then there is a unital Schwartz map $\Phi_{\bullet}:\pi_{\varphi}(\mathfrak{M})\rightarrow\pi_{\varphi}(\mathfrak{M})$ such that
\begin{equation}
\label{phibullet}
\Phi_{\bullet}(A)\Omega_{\varphi}=U_{\Phi,\varphi}A\Omega_{\varphi} \qquad  A\in\pi_{\varphi}(\mathfrak{M}) 
\end{equation}
It is simple to prove the following statements on multiplicative domains of Schwartz maps:
\begin{proposition}
\label{prop0}
Let $(\mathfrak{M},\Phi) $ be quantum processes and $\varphi$ its faithful stationary state, we have:
\item{a]} For each $d\in\mathfrak{D}_{\Phi}$ \  results  \  $U_{\Phi,\varphi}\pi_{\varphi}(d)=\pi_{\varphi}(\Phi(d))U_{\Phi,\varphi}$
\item{b]} If $U_{\Phi,\varphi}\pi_{\varphi}(a)=\pi_{\varphi}(\Phi(a))U_{\Phi,\varphi}$ \   then \  $\Phi(ax)=\Phi(a)\Phi(x)$ for all $x\in\mathfrak{M}$
\item{c]} $U_{\Phi,\varphi}^*U_{\Phi,\varphi}\in\pi_{\varphi}(\mathfrak{D}_{\Phi})'$  \  while \  $U_{\Phi,\varphi}U_{\Phi,\varphi}^*\in\pi_{\varphi}(\Phi(\mathfrak{D}_{\Phi}))'$
\item {d] }  $d\in\mathfrak{D}_{\Phi}$  \  if, and only if \  $||U_{\Phi,\varphi}\pi_{\varphi}(d)\Omega_{\varphi}||=||\pi_{\varphi}(d)\Omega_{\varphi}||$ \ and 
\ $||U_{\Phi,\varphi}\pi_{\varphi}(d^*)\Omega_{\varphi}||=||\pi_{\varphi}(d^*)\Omega_{\varphi}||$
\end{proposition}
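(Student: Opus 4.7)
The common thread through all four parts is to test operator identities by evaluating both sides on the dense subspace $\pi_{\varphi}(\mathfrak{M})\Omega_{\varphi}\subset\mathcal{H}_{\varphi}$, using the defining relation (\ref{contrazione 1}) together with the fact that $\varphi$ faithful makes $\pi_{\varphi}$ faithful and $\Omega_{\varphi}$ separating for $\pi_{\varphi}(\mathfrak{M})$. These ingredients, plus the Schwartz inequality, will do all the work; I do not expect any serious technical obstacle, but I list the main ones below.

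For part (a), I would evaluate both sides at $\pi_{\varphi}(x)\Omega_{\varphi}$: the left side becomes $\pi_{\varphi}(\Phi(dx))\Omega_{\varphi}$ and, using $d\in\mathfrak{D}_{\Phi}$ to split $\Phi(dx)=\Phi(d)\Phi(x)$, equals the right side $\pi_{\varphi}(\Phi(d))\pi_{\varphi}(\Phi(x))\Omega_{\varphi}$. Part (b) is the converse observation: the same calculation in reverse yields $\pi_{\varphi}(\Phi(ax))\Omega_{\varphi}=\pi_{\varphi}(\Phi(a)\Phi(x))\Omega_{\varphi}$, and the separating property of $\Omega_{\varphi}$ combined with the injectivity of $\pi_{\varphi}$ gives $\Phi(ax)=\Phi(a)\Phi(x)$.

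For part (c), I would first note that $\mathfrak{D}_{\Phi}$ is $*$-closed, so (a) applied to both $d$ and $d^{*}$ yields the two intertwiners $U_{\Phi,\varphi}\pi_{\varphi}(d)=\pi_{\varphi}(\Phi(d))U_{\Phi,\varphi}$ and, after taking the adjoint of the version for $d^{*}$, $\pi_{\varphi}(d)U_{\Phi,\varphi}^{*}=U_{\Phi,\varphi}^{*}\pi_{\varphi}(\Phi(d))$. Multiplying the second by $U_{\Phi,\varphi}$ on the right and the first by $U_{\Phi,\varphi}^{*}$ on the left, and comparing, shows that $U_{\Phi,\varphi}^{*}U_{\Phi,\varphi}$ commutes with every $\pi_{\varphi}(d)$; an entirely parallel manipulation (multiply the first relation on the right by $U_{\Phi,\varphi}^{*}$ and the second on the left by $U_{\Phi,\varphi}$) shows $U_{\Phi,\varphi}U_{\Phi,\varphi}^{*}$ commutes with every $\pi_{\varphi}(\Phi(d))$.

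Part (d) is the one that needs a small argument. The forward direction is a one-line calculation: $\|U_{\Phi,\varphi}\pi_{\varphi}(d)\Omega_{\varphi}\|^{2}=\varphi(\Phi(d)^{*}\Phi(d))=\varphi(\Phi(d^{*})\Phi(d))=\varphi(\Phi(d^{*}d))=\varphi(d^{*}d)$ by $*$-preservation (which follows from positivity of the Schwartz map), the multiplicative hypothesis, and stationarity. For the converse, the key observation is that Schwartz gives the positive element $\Phi(d^{*}d)-\Phi(d^{*})\Phi(d)\ge 0$, and stationarity rewrites the norm equality as $\varphi(\Phi(d^{*}d)-\Phi(d^{*})\Phi(d))=0$; faithfulness of $\varphi$ then forces this positive element to vanish, yielding $\Phi(d^{*}d)=\Phi(d^{*})\Phi(d)$. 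Applying the same argument with $d$ replaced by $d^{*}$ gives $\Phi(dd^{*})=\Phi(d)\Phi(d^{*})$, so $d\in\mathfrak{D}_{\Phi}$. The only place where I have to be a little careful is in justifying the identification $\Phi(d)^{*}=\Phi(d^{*})$, which requires observing that a unital Schwartz map is positive (hence $*$-preserving).
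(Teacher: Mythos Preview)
Your proposal is correct and is precisely the intended straightforward verification; the paper itself records no argument beyond ``It is straightforward,'' and your sketch fills in exactly the density/separating-vector computations one expects.
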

\begin{proof}
It is  straightforward
\end{proof}
We observe that if $\Phi$ is a *-homomorphism, then the contraction $U_{\Phi,\varphi}$ is an isometry on $\mathcal{H}_{\varphi}$.
\\
Another trivial consequence  of Schwartz's inequality and of the existence of a faithful stationary state for quantum process $(\mathfrak{M},\Phi) $, are the following relations:
\begin{equation}
\label{inclusion}
 \cdots\mathfrak{D}_{\Phi^n}\subset\mathfrak{D}_{\Phi^{n-1}}\subset\cdots\mathfrak{D}_{\Phi^2}\subset\mathfrak{D}_{\Phi}\subset\mathfrak{M}
\end{equation}
for all natural numbers $n\in\mathbb{N}$.
\\
Furthermore, since  $\Phi$ is a injective map on $\mathfrak{D}_{\infty}^+$,  its restricted to $\mathcal C_\Phi$ is a *-automorphism.
\\
In fact, if $a\in\mathfrak{D}_{\infty}^+$ with $\Phi(a)=0$, then we obtain
$$\varphi(\Phi(a^*) \Phi(a))=\varphi(\Phi(a^* a))=\varphi(a^*a)=0$$ 
Let $(\mathfrak{M},\Phi)$ be a quantum processes and $\varphi$ its stationary state, we recall that the dynamics $\Phi$ admits a $\varphi$-adjoint if there is a normal  unital Schwartz map $\Phi^{\sharp}:\mathfrak{M}\rightarrow\mathfrak{M}$ such that
\[
\varphi(b\Phi(a))=\varphi(\Phi^{\natural}(b)a) \qquad a,b\in\mathfrak{M} 
\]
We have the following conditions for the existence of a $\varphi$-adjointness of dynamics of quantum process (see proposition 3.3 in \cite{NSZ}):
\begin{proposition}
\label{prop-NSZ}
Let $(\mathfrak{M},\Phi)$ be a quantum process and $\varphi$ its faithful stationary state. If $(\Delta_{\varphi},J_{\varphi})$ denote the modular operators associated with pair $(\pi_{\varphi}(\mathfrak{M}),\Omega_{\varphi})$, then the following conditions are equivalent:
\begin{itemize}
\item[1 - ] $\Phi$ commutes with the modular automorphism group  $\left\{ \sigma_{t}^{\varphi} \right\}_{t\in\mathbb{R}} $ i.e.
\[
\sigma_{t}^{\varphi}\circ\Phi_{\bullet}=\Phi_{\bullet}\circ\sigma_{t}^{\varphi} \qquad  t\in\mathbb{R} 
\]
\item[2 - ] $U_{\Phi,\varphi}$ commutes with modular operators:
\[
U_{\Phi,\varphi}\Delta_{\varphi}^{it}=\Delta_{\varphi}^{it}U_{\Phi,\varphi}  \qquad t\in\mathbb{R} 
\]
and
\[
U_{\Phi,\varphi}J_{\varphi}=J_{\varphi}U_{\Phi,\varphi}  
\]
\item[3 - ] $\Phi$ admits $\varphi$-adjoint $\Phi^\sharp$.
\end{itemize}
\end{proposition}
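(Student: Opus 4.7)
My plan is to establish 1 \(\Leftrightarrow\) 2 directly, and then handle 2 \(\Leftrightarrow\) 3 through the adjoint contraction \(U_{\Phi,\varphi}^{*}\). The common thread is the identification (\ref{phibullet}) of \(\Phi_{\bullet}\) with the action of \(U_{\Phi,\varphi}\) on vectors of the form \(A\Omega_{\varphi}\), \(A\in\pi_{\varphi}(\mathfrak{M})\), and the fact that \(\Omega_{\varphi}\) is cyclic and separating for \(\pi_{\varphi}(\mathfrak{M})\).

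For 1 \(\Rightarrow\) 2 I would apply the hypothesis \(\sigma_{t}^{\varphi}\circ\Phi_{\bullet}=\Phi_{\bullet}\circ\sigma_{t}^{\varphi}\) to a generic \(A\in\pi_{\varphi}(\mathfrak{M})\) and evaluate at \(\Omega_{\varphi}\); the identities \(\sigma_{t}^{\varphi}(\,\cdot\,)=\Delta_{\varphi}^{it}(\,\cdot\,)\Delta_{\varphi}^{-it}\) and \(\Delta_{\varphi}^{it}\Omega_{\varphi}=\Omega_{\varphi}\) collapse the hypothesis to
\[
\Delta_{\varphi}^{it}U_{\Phi,\varphi}A\Omega_{\varphi}=U_{\Phi,\varphi}\Delta_{\varphi}^{it}A\Omega_{\varphi},
\]
which yields \([U_{\Phi,\varphi},\Delta_{\varphi}^{it}]=0\) by density of \(\pi_{\varphi}(\mathfrak{M})\Omega_{\varphi}\). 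For the \(J_{\varphi}\)-commutation I would observe that, independently of any hypothesis, \(U_{\Phi,\varphi}\) commutes on the natural core \(\pi_{\varphi}(\mathfrak{M})\Omega_{\varphi}\) with the Tomita operator \(S_{\varphi}\colon\pi_{\varphi}(a)\Omega_{\varphi}\mapsto\pi_{\varphi}(a^{*})\Omega_{\varphi}\), because \(\Phi\) is \(\ast\)-preserving. Inserting the polar decomposition \(S_{\varphi}=J_{\varphi}\Delta_{\varphi}^{1/2}\) together with the commutation of \(U_{\Phi,\varphi}\) with \(\Delta_{\varphi}^{1/2}\) (read off from the \(\Delta_{\varphi}^{it}\) case by spectral calculus) then forces \(U_{\Phi,\varphi}J_{\varphi}=J_{\varphi}U_{\Phi,\varphi}\) on the dense range of \(\Delta_{\varphi}^{1/2}\), and hence everywhere by boundedness. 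The converse 2 \(\Rightarrow\) 1 retraces the same chain, the separating character of \(\Omega_{\varphi}\) upgrading equality of vectors to equality of operators in \(\pi_{\varphi}(\mathfrak{M})\).

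The implication 3 \(\Rightarrow\) 2 is then immediate: since \(\Phi^{\sharp}\) is a \(\varphi\)-preserving normal unital Schwartz map, the contraction \(U_{\Phi^{\sharp},\varphi}\) exists, and one checks by pairing against cyclic vectors and using the adjointness identity that \(U_{\Phi^{\sharp},\varphi}=U_{\Phi,\varphi}^{*}\); applying the already-proved 1 \(\Leftrightarrow\) 2 to both \(\Phi\) and \(\Phi^{\sharp}\) delivers condition 2. The reverse 2 \(\Rightarrow\) 3 is where I expect the principal technical hurdle. My plan is to set
\[
\pi_{\varphi}(\Phi^{\sharp}(b))\Omega_{\varphi}=U_{\Phi,\varphi}^{*}\pi_{\varphi}(b)\Omega_{\varphi},
\]
the hard part being to show that the right-hand side actually lies in \(\pi_{\varphi}(\mathfrak{M})\Omega_{\varphi}\), so that \(\Phi^{\sharp}(b)\) is a bona fide element of \(\mathfrak{M}\). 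To that end I would exploit that \(U_{\Phi,\varphi}^{*}\) inherits commutation with \(J_{\varphi}\) and \(\Delta_{\varphi}^{it}\), hence also with \(\Delta_{\varphi}^{1/4}\), and therefore preserves Araki's natural positive cone \(\overline{\Delta_{\varphi}^{1/4}\pi_{\varphi}(\mathfrak{M})_{+}\Omega_{\varphi}}\); the parametrization of normal positive functionals on \(\pi_{\varphi}(\mathfrak{M})\) by this cone then produces \(\Phi^{\sharp}\) as a normal positive unital map on \(\mathfrak{M}\). The duality \(\varphi(b\Phi(a))=\varphi(\Phi^{\sharp}(b)a)\) is a one-line matrix-element computation of \(\langle U_{\Phi,\varphi}^{*}\pi_{\varphi}(b^{*})\Omega_{\varphi},\pi_{\varphi}(a)\Omega_{\varphi}\rangle\), and the Schwarz inequality for \(\Phi^{\sharp}\) transfers from the contractivity of \(U_{\Phi,\varphi}^{*}\) in the spirit of Proposition \ref{prop0}.
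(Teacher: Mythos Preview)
The paper does not supply its own proof of this proposition; it simply cites Proposition~3.3 of \cite{NSZ}. Your overall strategy follows the standard argument found there, and your sketches of $1\Leftrightarrow 2$ and of $2\Rightarrow 3$ (via preservation of the natural positive cone) are along the correct lines.

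There is, however, a genuine gap in your $3\Rightarrow 2$. You correctly note that the existence of $\Phi^{\sharp}$ yields $U_{\Phi^{\sharp},\varphi}=U_{\Phi,\varphi}^{*}$, but the clause ``applying the already-proved $1\Leftrightarrow 2$ to both $\Phi$ and $\Phi^{\sharp}$ delivers condition 2'' is circular: at this stage neither condition 1 nor condition 2 is known for $\Phi$ or for $\Phi^{\sharp}$, so there is nothing to invoke. What the identity $U_{\Phi,\varphi}^{*}=U_{\Phi^{\sharp},\varphi}$ actually buys you is a \emph{second} intertwining with the Tomita operator: since $\Phi^{\sharp}$ is $\ast$-preserving, one has $U_{\Phi,\varphi}^{*}S_{\varphi}\subset S_{\varphi}U_{\Phi,\varphi}^{*}$ on the core, in addition to the relation $U_{\Phi,\varphi}S_{\varphi}\subset S_{\varphi}U_{\Phi,\varphi}$ that you already observed holds for any $\ast$-preserving $\Phi$. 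Taking adjoints of the latter gives $U_{\Phi,\varphi}^{*}F_{\varphi}\subset F_{\varphi}U_{\Phi,\varphi}^{*}$ with $F_{\varphi}=S_{\varphi}^{*}$, and combining the two inclusions yields
\[
U_{\Phi,\varphi}^{*}\Delta_{\varphi}=U_{\Phi,\varphi}^{*}F_{\varphi}S_{\varphi}\subset F_{\varphi}U_{\Phi,\varphi}^{*}S_{\varphi}\subset F_{\varphi}S_{\varphi}U_{\Phi,\varphi}^{*}=\Delta_{\varphi}U_{\Phi,\varphi}^{*}.
\]
A resolvent argument then gives commutation of $U_{\Phi,\varphi}^{*}$ (hence of $U_{\Phi,\varphi}$) with every $\Delta_{\varphi}^{it}$, after which the $J_{\varphi}$-commutation follows exactly as in your $1\Rightarrow 2$. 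You should replace the circular sentence by this chain.
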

A triple $(\mathfrak{M},\Phi, \varphi)$ constituted  by quantum processes $(\mathfrak{M},\Phi)$, by its normal faithful stationary state $\varphi$ and with dynamics  $\Phi$ which admits a $\varphi$-adjoint $\Phi^\sharp$, will be called a quantum dynamical system.
\section{Decomposition theorem}
We consider a von Neumann algebra $\mathfrak M$ and its faithful normal state $\varphi$ and set with $(\mathcal{H}_{\varphi},\pi_{\varphi},\Omega_{\varphi})$ the GNS representation of $\varphi$ and with $\left\{\sigma_{t}^{\varphi}\right\}_{t\in \mathbb R}$ its modular automorphism group. 
\\
Let $\mathfrak{R}$ be a von Neumann subalgebra of  $\mathfrak{M}$,  we recall (see ref. \cite{kummerer85}) that the $\varphi$-orthogonal of $\mathfrak{R}$ is the set:
\begin{equation}
\mathfrak{R}^{\perp_{\varphi}}=\{ a\in\mathfrak{M}:\varphi(a^*x)=0 \ \ \ \text{for all}\ \ x\in\mathfrak{R} \}
\end{equation} 
Furthermore, it is simple to prove that $\mathfrak{R}^{\perp_{\varphi}}$ is a closed linear space  in the $\sigma$-topology with $\mathfrak{R}^{\perp_{\varphi}}\cap\mathfrak{R}$=\{0\}.
\\
We observe that $\mathfrak{R}^{\perp_{\varphi}}\subset \ker\varphi$ and if $\mathfrak{R}=\mathbb{C}I$ then $\mathfrak{R}^{\perp_{\varphi}}= \ker\varphi$, where $\ker\varphi=\{a\in\mathfrak{M}: \ \varphi(a)=0 \}$.
\\
Moreover if $ y \in \mathfrak R $ and $d_\bot \in \mathfrak{R}^{\perp_{\varphi}}$ then $y d_\bot \in \mathfrak{R}^{\perp_{\varphi}}$ since 
$$ \varphi( (y d_\bot)^* x)=\varphi (d_\bot^* y ^* x)=0 \qquad x\in\mathfrak R $$
\begin{theorem}
\label{teo-dec}
The von Neumann algebra $\mathfrak R$ is invariant under modular automorphism group $\sigma_{t}^{\varphi}$ if and only if both these conditions are fulfilled:  
\begin{itemize}
\item[a - ] the set $\mathfrak{R}^{\perp_{\varphi}}$  is closed under the involution operation;
\item[b - ] for any $a\in\mathfrak M$ there is a unique $a_\| \in \mathfrak R$ and $a_\bot \in \mathfrak{R}^{\perp_{\varphi}}$  such that $a= a_\| + a_\bot $.
\\In other words we have the following algebraic decomposition
\begin{equation}
\label{R-dec}
 \mathfrak{M}= \mathfrak{R} \oplus \mathfrak{R}^{\perp_{\varphi}}
\end{equation}
\end{itemize}  
\end{theorem}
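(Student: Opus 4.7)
The strategy is to reduce everything to Takesaki's theorem: a von Neumann subalgebra $\mathfrak{R}\subset\mathfrak{M}$ is globally $\sigma_{t}^{\varphi}$-invariant if and only if there is a faithful normal $\varphi$-preserving conditional expectation $E:\mathfrak{M}\to\mathfrak{R}$. The whole proof then reduces to showing that the existence of such an $E$ is equivalent to (a) and (b) taken together.

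For the forward implication, assume $\sigma_{t}^{\varphi}(\mathfrak{R})=\mathfrak{R}$ and let $E$ be the associated conditional expectation. The central step is to identify $\mathfrak{R}^{\perp_{\varphi}}$ with $\ker E$. If $E(a)=0$ and $x\in\mathfrak{R}$, the module and $\varphi$-preservation properties give $\varphi(a^{*}x)=\varphi(E(a^{*}x))=\varphi(E(a^{*})x)=\varphi(E(a)^{*}x)=0$, so $\ker E\subset\mathfrak{R}^{\perp_{\varphi}}$. Conversely, for $a\in\mathfrak{R}^{\perp_{\varphi}}$, taking $x=E(a)\in\mathfrak{R}$ in the same identity yields $\varphi(E(a)^{*}E(a))=0$, hence $E(a)=0$ by faithfulness of $\varphi$. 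With this identification, (a) is automatic (the kernel of a $*$-preserving map is $*$-stable) and (b) becomes the decomposition $a=E(a)+(a-E(a))$, with uniqueness ensured by $\mathfrak{R}\cap\mathfrak{R}^{\perp_{\varphi}}=\{0\}$.

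For the converse, assume (a) and (b) and define $E(a):=a_{\|}$. Uniqueness in (b) makes $E$ a well-defined linear idempotent with range $\mathfrak{R}$. Condition (a) forces $E$ to be involution-preserving, since from $a^{*}=a_{\|}^{*}+a_{\bot}^{*}$ with $a_{\|}^{*}\in\mathfrak{R}$ and $a_{\bot}^{*}\in\mathfrak{R}^{\perp_{\varphi}}$, uniqueness gives $E(a^{*})=E(a)^{*}$. The $\mathfrak{R}$-bimodule property follows from the observation already recorded in the excerpt that $y\,\mathfrak{R}^{\perp_{\varphi}}\subset\mathfrak{R}^{\perp_{\varphi}}$ for $y\in\mathfrak{R}$; condition (a) upgrades this to right multiplication via $a_{\bot}y=(y^{*}a_{\bot}^{*})^{*}$. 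Finally, $\varphi$-preservation is $\varphi(a_{\bot})=\varphi\bigl((a_{\bot}^{*})^{*}\cdot 1\bigr)=0$, using $1\in\mathfrak{R}$ and (a). Once $E$ is shown to be bounded and normal, Takesaki's theorem yields the $\sigma_{t}^{\varphi}$-invariance.

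The main obstacle I anticipate is that (b) is a purely algebraic direct sum with no built-in topological content, whereas Takesaki requires a normal conditional expectation. The delicate step is therefore to show that the projection $a\mapsto a_{\|}$ is $\sigma$-continuous. I would exploit that both $\mathfrak{R}$ and $\mathfrak{R}^{\perp_{\varphi}}$ are $\sigma$-closed linear subspaces of $\mathfrak{M}$ and invoke a closed-graph type argument to obtain boundedness of $E$; Tomiyama's theorem then identifies it as a norm-one projection (hence a conditional expectation), and normality follows from the normality of $\varphi$ together with the $\varphi$-preserving and module properties of $E$. If that route proves stubborn, a fallback is to bypass Takesaki entirely and exploit the KMS identity $\varphi(y^{*}x)=\varphi(x\,\sigma^{\varphi}_{-i}(y^{*}))$ for $x\in\mathfrak{R}$, $y\in\mathfrak{R}^{\perp_{\varphi}}$ to show directly that $\sigma_{t}^{\varphi}$ preserves $\mathfrak{R}^{\perp_{\varphi}}$, and hence (via (a) and (b)) also $\mathfrak{R}$.
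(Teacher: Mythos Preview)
Your proposal is correct and follows essentially the same route as the paper: Takesaki's theorem in both directions, identifying $\mathfrak{R}^{\perp_\varphi}$ with $\ker E$ for the forward implication and defining $E(a)=a_\|$ and appealing to Tomiyama for the converse. If anything, you are more scrupulous than the paper, which simply asserts that $a\mapsto a_\|$ is a projection of norm one and that Tomiyama yields a \emph{normal} conditional expectation without further justification---precisely the step you flag as the main obstacle.
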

\begin{proof}
From Takesaki \cite{takesaki72} we have $\sigma_{t}^{\varphi} (\pi_\varphi(\mathfrak R)) \subset \pi_\varphi(\mathfrak {R})$ for all $t\in\mathbb{R}$ if, and only if there exist a  normal conditional expectation $\mathcal E:\mathfrak M \rightarrow \mathfrak R $ such that $\varphi \circ \mathcal E = \varphi$.
\\
Let $\mathfrak R$ be  invariant under modular automorphism group $\sigma_{t}^{\varphi}$, it is simple to prove that  
\begin{equation}
\mathfrak{R}^{\perp_{\varphi}}=\left\{a\in\mathfrak M : \mathcal E (a)=0 \right\}
\end{equation}
hence $\mathfrak{R}^{\perp_{\varphi}}$ is closed under the involution operation.
\\
For any $a\in\mathfrak M$ we set  $a_\bot=a- \mathcal E (a)$  and
$$\varphi(a_\bot^* x)= \varphi((a^* - \mathcal E (a^*))x)=\varphi(a^* x) - \varphi( \mathcal E (a^*)x)= \varphi(a^* x) - \varphi( \mathcal E (a^*x))=0$$
for all $x\in \mathfrak R$ hence $a_\bot\in \mathfrak{R}^{\perp_{\varphi}}$.
\\
So for any $a\in \mathfrak M$ there exist a unique $a_\| \in \mathfrak R$ and $a_\bot \in \mathfrak{R}^{\perp_{\varphi}}$ such that $a= a_\| + a_\bot $  where we have set $a_\| =\mathcal E(a)$. 
\\
The uniqueness follows because if $a=0$ then $a_\| =a_\bot=0$. 
\\
Indeed we have
$$ \varphi(a^* a) = \varphi(a_\|^* a_\|) + \varphi(a_\bot^*a_\bot)=0$$ 
since $a_\|^* a_\bot, $ and $ a_\bot^*a_\|$ belong to $\mathfrak{R}^{\perp_{\varphi}}$ and $\varphi$ is a faithful state.
\\
For the vice-versa, if the set $\mathfrak{R}^{\perp_{\varphi}}$ is closed under the involution operation and $ \mathfrak{M}= \mathfrak{R} \oplus \mathfrak{R}^{\perp_{\varphi}}$ then for any $a\in \mathfrak M$ there is a unique $a_\| \in \mathfrak R$ and $a_\bot \in \mathfrak{R}^{\perp_{\varphi}}$ such that $a= a_\| + a_\bot $ . 
\\
The map $ a \in \mathfrak M \rightarrow a_\| \in \mathfrak R $ is a projection of norm one ( i.e. it is satisfies $(1)_\|=1$ and $ ( (a)_\| )_\| =a_\|$ for all $a\in \mathfrak M $ ), for Tomiyama \cite{tomiyama57} it is a normal conditional expectation (see \cite{li} for a modern review) and $\varphi(a)=\varphi(a_\|)$ for all $a \in \mathfrak M$.
\end{proof}
We observe that if $\mathfrak R^{\perp_{\varphi}}$ is a *-algebra (without unit) then $\mathfrak R^{\perp_{\varphi}}= \left\{0 \right\}$ since $\varphi(a_\bot^* a_\bot)=0$ for all $a_\bot\in\mathfrak R^{\perp_{\varphi}}$ and $\varphi$ is a faithful state.
\\
 Moreover, if $p$ is a orthogonal projector of $\mathfrak M$ then $p\notin \mathfrak R^{\perp_{\varphi}}$.
\\
We have the following remark:
\\
If $a\in\mathfrak{M}$ with $a=a_\| +a_\bot$ where $a_\|\in\mathfrak{R}$ and $a_\bot\in\mathfrak{R}^{\perp_{\varphi}}$, then 
\begin{equation}
\label{hilbertdeco1}
||\pi_{\varphi}(a)\Omega_{\varphi}||^2=||\pi_{\varphi}(a_\|)\Omega_{\varphi}||^2+||\pi_{\varphi}(a_\bot)\Omega_{\varphi}||^2
\end{equation}
\begin{proposition}
\label{hilbertdeco2}
Let $\mathfrak R$ be a von Neumann algebra invariant under modular automorphism group $\sigma_{t}^{\varphi}$ . If $\mathcal{H}_o$ and $\mathcal{K}_o$ are the closure of the linear space $\pi_{\varphi}(\mathfrak{R})\Omega_{\varphi}$ and of $\pi_{\varphi}(\mathfrak{R}^{\perp_{\varphi}})\Omega_{\varphi}$ respectively, then
\[
\mathcal{H}_{\varphi}=\mathcal{H}_o \oplus \mathcal{K}_o
\]
Moreover the orthogonal projection $P_o$ on Hilbert space $\mathcal{H}_o$ belongs to $\pi_{\varphi}(\mathfrak{R})'$.
\end{proposition}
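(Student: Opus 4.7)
The plan is to derive the Hilbert space decomposition from the algebraic decomposition established in Theorem~\ref{teo-dec}, and then derive the commutation property from the fact that $\mathfrak{R}^{\perp_{\varphi}}$ is a left $\mathfrak{R}$-module (a fact noted just before Theorem~\ref{teo-dec}).

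First, I would verify that $\mathcal{H}_o \perp \mathcal{K}_o$. For $a \in \mathfrak{R}$ and $b \in \mathfrak{R}^{\perp_{\varphi}}$ the inner product is
\[
\langle \pi_{\varphi}(a)\Omega_{\varphi}, \pi_{\varphi}(b)\Omega_{\varphi} \rangle = \varphi(a^*b),
\]
and since $\mathfrak{R}^{\perp_{\varphi}}$ is involution-closed by Theorem~\ref{teo-dec}, $b^* \in \mathfrak{R}^{\perp_{\varphi}}$, so $\varphi(b^*a) = 0$; taking complex conjugates gives $\varphi(a^*b) = 0$. Hence the two closed subspaces are mutually orthogonal, and extending by continuity yields $\mathcal{H}_o \perp \mathcal{K}_o$.

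Next, to get $\mathcal{H}_{\varphi} = \mathcal{H}_o \oplus \mathcal{K}_o$, I would use that $\pi_{\varphi}(\mathfrak{M})\Omega_{\varphi}$ is dense in $\mathcal{H}_{\varphi}$ by the GNS construction, and then invoke Theorem~\ref{teo-dec}: every $a \in \mathfrak{M}$ splits uniquely as $a = a_\| + a_\bot$ with $a_\| \in \mathfrak{R}$, $a_\bot \in \mathfrak{R}^{\perp_{\varphi}}$. Hence
\[
\pi_{\varphi}(a)\Omega_{\varphi} = \pi_{\varphi}(a_\|)\Omega_{\varphi} + \pi_{\varphi}(a_\bot)\Omega_{\varphi} \in \mathcal{H}_o + \mathcal{K}_o,
\]
so $\mathcal{H}_o + \mathcal{K}_o$ is dense in $\mathcal{H}_{\varphi}$. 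Combined with orthogonality, this is the asserted orthogonal decomposition; identity~(\ref{hilbertdeco1}) is then automatic.

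Finally, for the claim $P_o \in \pi_{\varphi}(\mathfrak{R})'$ I would show that both $\mathcal{H}_o$ and $\mathcal{K}_o$ are invariant under $\pi_{\varphi}(y)$ for every $y \in \mathfrak{R}$. Invariance of $\mathcal{H}_o$ is immediate from $y\mathfrak{R} \subset \mathfrak{R}$. For $\mathcal{K}_o$, the key input is already in the excerpt: if $y \in \mathfrak{R}$ and $d_\bot \in \mathfrak{R}^{\perp_{\varphi}}$, then $y d_\bot \in \mathfrak{R}^{\perp_{\varphi}}$; hence $\pi_{\varphi}(y)\pi_{\varphi}(d_\bot)\Omega_{\varphi} = \pi_{\varphi}(yd_\bot)\Omega_{\varphi} \in \mathcal{K}_o$, and by continuity $\pi_{\varphi}(y)\mathcal{K}_o \subset \mathcal{K}_o$. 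A projection whose range and kernel are both invariant under a $*$-algebra of operators commutes with that algebra, giving $P_o \in \pi_{\varphi}(\mathfrak{R})'$. The only mild subtlety is checking that the left-module property holds on the whole $\mathfrak{R}^{\perp_{\varphi}}$ (not just a generating subset), but this is already recorded in the paragraph preceding Theorem~\ref{teo-dec}, so no genuine obstacle remains.
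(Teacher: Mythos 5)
Your proof is correct and follows essentially the same route as the paper: orthogonality of $\mathcal{H}_o$ and $\mathcal{K}_o$ from the definition of $\mathfrak{R}^{\perp_{\varphi}}$, totality from the algebraic decomposition of Theorem \ref{teo-dec} (the paper verifies closedness of the orthogonal sum by hand, via a Cauchy-net argument based on (\ref{hilbertdeco1}), which is exactly the standard fact you cite), and $P_o\in\pi_{\varphi}(\mathfrak{R})'$ from invariance of $\mathcal{H}_o$ and $\mathcal{K}_o$ under $\pi_{\varphi}(\mathfrak{R})$, the latter via the left-module property. One cosmetic remark: you do not need involution-closedness for the orthogonality step, since for $b\in\mathfrak{R}^{\perp_{\varphi}}$ and $a\in\mathfrak{R}$ the definition already gives $\varphi(b^*a)=0$, whence $\varphi(a^*b)=\overline{\varphi(b^*a)}=0$ (note that $b^*\in\mathfrak{R}^{\perp_{\varphi}}$ would instead yield $\varphi(ba)=0$, which is not the quantity you use).
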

\begin{proof}
We have that $ \mathcal K_o\subset \mathcal H_o^\bot$ since for any $r_\bot\in\mathfrak{R}^{\perp_\varphi}$ and $\psi_o\in\mathcal{H}_o$ we obtain:
\begin{equation*}
\left\langle\pi_\varphi(r_\bot)\Omega_\varphi, \psi_o \right\rangle = \underset{\alpha\rightarrow\infty}{\lim}\left\langle\pi_\varphi(r_\bot)\Omega_\varphi, \pi_\varphi(r_\alpha )\Omega_\varphi \right\rangle= \underset{\alpha\rightarrow\infty}{\lim} \varphi(r_\bot^*r_\alpha)=0
\end{equation*}
where $\psi_o =\underset{\alpha\rightarrow\infty}{\lim} \pi_\varphi(r_\alpha )\Omega_\varphi$ with $\{r_\alpha \}_\alpha$ net belongs to $\mathfrak{R}$.
\\
Let $\psi\in\mathcal{H}_\varphi$ we can write 
\[
\psi=\underset{\alpha\rightarrow\infty}{\lim}\pi_{\varphi}(m_\alpha)\Omega_\varphi =\underset{\alpha\rightarrow\infty}{\lim}(\pi_{\varphi}(r_\alpha)\Omega_\varphi+\pi_{\varphi}((r_{\alpha \bot})\Omega_\varphi)
\]
where $m_\alpha = r_\alpha +  r_{\alpha \bot}$ for each $\alpha$.
\\
The net $ \{ \pi_{\varphi}(r_\alpha)\Omega_{\varphi} \}$ has limit, since by the relation (\ref{hilbertdeco1}) for each $\epsilon\geq0$ there is a index $\nu$ such  that for $\alpha\geq\nu$ and $\beta\geq\nu$ we have the Cauchy relation:  
\[
|| \pi_\varphi(r_\alpha)\Omega_\varphi-\pi_\varphi(r_\beta)\Omega_\varphi || \leq || \pi_\varphi(m_\alpha)\Omega_\varphi -\pi_\varphi(m_\beta)\Omega_\varphi ||\leq \epsilon
\]
It follows that there are $\psi_\|\in\mathcal{H}_o$ and $\psi_\bot\in \mathcal{K}_o$ such that
\[
\psi=\underset{\alpha\rightarrow\infty}{\lim}\pi_{\varphi}(r_\alpha)\Omega_\varphi + \underset{\alpha\rightarrow\infty}{\lim}\pi_{\varphi}((r_{\alpha \bot})\Omega_\varphi=
\psi_\| + \psi_\bot\in \mathcal{H}_o \oplus \mathcal{K}_o
\]
It is simple to prove that  $\pi_\varphi(\mathfrak{R}) \mathcal{H}_o\subset \mathcal{H}_o $ therefore   $P_o\in \pi_\varphi(\mathfrak{R})'$.
\end{proof}
We have the following proposition:
\begin{proposition}
Let $( \mathfrak{M},\Phi)$ be a quantum process and $\varphi$ a normal faithful state on $\mathfrak M$. For any natural number $n \in \mathbb N$ we obtain:
\begin{equation}
 \mathfrak M = \mathfrak D_{\Phi^n} \oplus \mathfrak D_{\Phi^n}^{\perp_\varphi}
\end{equation}
and
\begin{equation}
\label{teo-dec1}
\mathfrak M = \mathfrak D_\infty^+ \oplus \mathfrak D_\infty^{+ \perp_\varphi}
\end{equation}
Furthermore, if $\varphi$ is a stationary state for  $\Phi$, then
\begin{equation}
\label{teo-dec1b}
\mathfrak M = \mathcal C_\Phi \oplus \mathcal C_\Phi ^{\perp_\varphi}
\end{equation}
and the restriction of $\Phi$ to $\mathcal C_\Phi$ is a *-automorphism with $\Phi(\mathcal C_\Phi ^{\perp_\varphi})\subset \mathcal C_\Phi ^{\perp_\varphi}$.
\end{proposition}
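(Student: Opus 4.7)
The plan is to apply Theorem~\ref{teo-dec} to each of the three subalgebras $\mathfrak D_{\Phi^n}$, $\mathfrak D_\infty^+$ and $\mathcal C_\Phi$, which reduces every decomposition claim to verifying invariance under the modular group $\{\sigma_t^\varphi\}_{t\in\mathbb R}$, and then to extract the extra automorphism structure on $\mathcal C_\Phi$ from stationarity of $\varphi$.

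\textbf{Step 1 (the main obstacle).} I would first show that $\mathfrak D_{\Phi^n}$ is $\sigma_t^\varphi$-invariant. The key input is criterion (d) of Proposition~\ref{prop0}, applied to the Schwartz map $\Phi^n$: an element $d$ lies in $\mathfrak D_{\Phi^n}$ exactly when $U_{\Phi^n,\varphi}$ is isometric on both $\pi_\varphi(d)\Omega_\varphi$ and $\pi_\varphi(d^*)\Omega_\varphi$. Since the modular unitary $\Delta_\varphi^{it}$ implements $\sigma_t^\varphi$ and fixes $\Omega_\varphi$, one has $\pi_\varphi(\sigma_t^\varphi(d))\Omega_\varphi=\Delta_\varphi^{it}\pi_\varphi(d)\Omega_\varphi$, so the GNS norm is preserved along the modular flow. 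The delicate point is to control $\|U_{\Phi^n,\varphi}\Delta_\varphi^{it}\pi_\varphi(d)\Omega_\varphi\|$ through the interaction of $U_{\Phi^n,\varphi}$ with the modular unitaries---this is where one invokes the Accardi--Cecchini/Takesaki-type ingredients that already underlie Proposition~\ref{prop-NSZ} and the proof of Theorem~\ref{teo-dec}. Once the invariance is in place, Theorem~\ref{teo-dec} immediately yields $\mathfrak M=\mathfrak D_{\Phi^n}\oplus\mathfrak D_{\Phi^n}^{\perp_\varphi}$.

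\textbf{Step 2.} Since $\mathfrak D_\infty^+=\bigcap_{n\in\mathbb N}\mathfrak D_{\Phi^n}$ is an intersection of modularly invariant von Neumann subalgebras, it is itself $\sigma_t^\varphi$-invariant and another application of Theorem~\ref{teo-dec} gives (\ref{teo-dec1}).

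\textbf{Step 3.} For the multiplicative core, stationarity of $\varphi$ makes $\Phi|_{\mathfrak D_\infty^+}$ a normal $\varphi$-preserving *-homomorphism, so each $\Phi^n(\mathfrak D_\infty^+)$ is a von Neumann subalgebra. Composing $\Phi^n$ with the $\varphi$-preserving conditional expectation onto $\mathfrak D_\infty^+$ supplied by Step~2, one transports modular invariance along the orbit, proving that every $\Phi^n(\mathfrak D_\infty^+)$ is $\sigma_t^\varphi$-invariant; the intersection $\mathcal C_\Phi$ inherits this and Theorem~\ref{teo-dec} delivers (\ref{teo-dec1b}). The automorphism property $\Phi(\mathcal C_\Phi)=\mathcal C_\Phi$ has already been observed in the discussion preceding the proposition, as a consequence of injectivity of $\Phi$ on $\mathfrak D_\infty^+$ (which follows from $\varphi(\Phi(a)^*\Phi(a))=\varphi(a^*a)$ together with faithfulness of $\varphi$). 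Finally, to show $\Phi(\mathcal C_\Phi^{\perp_\varphi})\subset\mathcal C_\Phi^{\perp_\varphi}$, take $a_\bot\in\mathcal C_\Phi^{\perp_\varphi}$ and $x\in\mathcal C_\Phi$; surjectivity of $\Phi|_{\mathcal C_\Phi}$ furnishes $y\in\mathcal C_\Phi$ with $x=\Phi(y)$, and since $y\in\mathfrak D_\Phi$ one may factor $\Phi(a_\bot^*)\Phi(y)=\Phi(a_\bot^*y)$, whence by stationarity
\begin{equation*}
\varphi(\Phi(a_\bot)^*x)=\varphi(\Phi(a_\bot^*y))=\varphi(a_\bot^*y)=0,
\end{equation*}
using $a_\bot\in\mathcal C_\Phi^{\perp_\varphi}$ and $y\in\mathcal C_\Phi$. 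The chief difficulty of the whole argument is concentrated in Step~1, the modular invariance of the multiplicative domains; everything else is then a matter of passing to intersections, images, and exploiting the $\varphi$-preserving conditional expectations furnished by Theorem~\ref{teo-dec}.
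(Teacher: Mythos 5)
Your overall route is exactly the paper's: establish $\sigma_t^\varphi$-invariance of $\mathfrak D_{\Phi^n}$, pass to the intersection for $\mathfrak D_\infty^+$, handle $\mathcal C_\Phi$, and feed each algebra into Theorem~\ref{teo-dec}; your closing computation for $\Phi(\mathcal C_\Phi^{\perp_\varphi})\subset\mathcal C_\Phi^{\perp_\varphi}$ (write $x=\Phi(y)$ with $y\in\mathcal C_\Phi$, use multiplicativity on $\mathfrak D_\Phi$ and stationarity) is verbatim the paper's. In Step~1 you work at the Hilbert-space level, via Proposition~\ref{prop0}(d) and the commutation $U_{\Phi,\varphi}\Delta_\varphi^{it}=\Delta_\varphi^{it}U_{\Phi,\varphi}$ of Proposition~\ref{prop-NSZ}, whereas the paper works at the algebra level with $\sigma_t^\varphi\circ\Phi_\bullet=\Phi_\bullet\circ\sigma_t^\varphi$ (condition~1 of the same proposition applied to $\Phi^n$); these are interchangeable, and once the commutation is granted your ``delicate point'' closes in one line, $\|U_{\Phi,\varphi}^{\,n}\Delta_\varphi^{it}\pi_\varphi(d)\Omega_\varphi\|=\|\Delta_\varphi^{it}U_{\Phi,\varphi}^{\,n}\pi_\varphi(d)\Omega_\varphi\|=\|\pi_\varphi(d)\Omega_\varphi\|=\|\pi_\varphi(\sigma_t^\varphi(d))\Omega_\varphi\|$, and likewise for $d^*$. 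Be aware, though, that both you and the paper are using more than the stated hypotheses here: $U_{\Phi,\varphi}$, $\Phi_\bullet$ and Proposition~\ref{prop-NSZ} all presuppose that $\varphi$ is stationary and that $\Phi$ admits a $\varphi$-adjoint, so the argument really lives in the quantum-dynamical-system setting even for the first two decompositions; this slippage is the paper's, not specifically yours.

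The one step that does not work as written is the mechanism you propose in Step~3. Composing $\Phi^n$ with the $\varphi$-preserving conditional expectation onto $\mathfrak D_\infty^+$ cannot show that the image algebra $\Phi^n(\mathfrak D_\infty^+)$ is modularly invariant: the expectation lands in $\mathfrak D_\infty^+$, not in $\Phi^n(\mathfrak D_\infty^+)$, and there is no way to ``transport'' invariance through it. The paper's (and the correct) argument is direct and needs no expectation at all: for $b\in\mathcal C_\Phi$ and each $n$ pick $x_n\in\mathfrak D_\infty^+$ with $b=\Phi^n(x_n)$; then
\begin{equation*}
\sigma_t^\varphi(\pi_\varphi(b))=\sigma_t^\varphi\bigl(\pi_\varphi(\Phi^n(x_n))\bigr)=\Phi_\bullet^n\bigl(\sigma_t^\varphi(\pi_\varphi(x_n))\bigr)\in\pi_\varphi\bigl(\Phi^n(\mathfrak D_\infty^+)\bigr),
\end{equation*}
using the commutation you already invoked in Step~1 together with the invariance of $\mathfrak D_\infty^+$ from Step~2; intersecting over $n$ gives $\sigma_t^\varphi$-invariance of $\mathcal C_\Phi$ itself, so Theorem~\ref{teo-dec} applies to $\mathcal C_\Phi$ directly. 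Since every ingredient of this repair is already in your toolkit, this is a local fix rather than a change of strategy, but as stated your Step~3 is a gap.
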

\begin{proof}
for any $d\in\mathfrak D_{\Phi^n}$ and natural number $n$ we have: 
\begin{eqnarray*}
\Phi_{\bullet}^ n(\sigma^{\varphi}_{t}(\pi_{\varphi}(d)^*)\sigma^{\varphi}_{t}(\pi_{\varphi}(d))) =\Phi_{\bullet}^n(\sigma^{\varphi}_{t}(\pi_{\varphi}(d)^*))\Phi_{\bullet}^n(\sigma^{\varphi}_{t}(\pi_{\varphi}(d))).
\end{eqnarray*}
since $\Phi$ commutes with our modular automorphism group $\sigma^{\varphi}_{t}$. It follows that 
$\sigma^{\varphi}_{t}(\pi_\varphi(\mathfrak D_{\Phi^n}) )$ is included in $\pi_\varphi(\mathfrak D_{\Phi^n})$ for all $n \in \mathbb N$ and $t \in \mathbb R$.
\\
Let $b\in\mathcal C_\Phi$, we have that $\sigma^{\varphi}_{t}(\pi_{\varphi}(b))\in\pi_\varphi(\mathcal C_\Phi)$ for all real number $t$.
\\
In fact for each natural number $n$ there exist a $x_n\in \mathfrak D_\infty^{+}$ such that $b=\Phi^n(x_n)$. We can write that 
$$\sigma^{\varphi}_{t}(\pi_\varphi(b)) = \sigma^{\varphi}_{t}(\pi_\varphi(\Phi^n(x_n))=\Phi^n_\bullet(\sigma^{\varphi}_{t}(\pi_\varphi(x_n)) $$ 
and by above relation $\sigma^{\varphi}_{t}(\pi_\varphi(x_n))\in \pi_\varphi(\mathfrak D_\infty^+)$ for all natural number $n$. It follows that 
$\sigma^{\varphi}_{t}(\pi_\varphi(b))\in \pi_\varphi (\Phi^n(\mathfrak D_\infty^+)$ for all natural number $n$.
\\
Let $y\in C_\Phi ^{\perp_\varphi}$, since $\Phi(\mathcal C_\Phi)= \mathcal C_\Phi$ we have for any $c\in\mathcal C_\Phi$ that 
$$ \varphi(\Phi(y) c )=\varphi(\Phi(y) \Phi(c_o))=\varphi(y c_o)=0$$
where $c=\Phi(c_o)$ with $c_o\in \mathcal C_\Phi\subset \mathfrak D_\infty^+$.
\end{proof}
We consider a quantum  dynamical system $( \mathfrak{M},\Phi,\varphi)$ with $\varphi$-adjoint $\Phi^\sharp$.  We set with $\mathfrak{D}_{\infty}$ (or with $\mathfrak{D}_{\infty}(\Phi)$ when we have to highlight the map $\Phi$), the following von Neumann Algebra
\begin{equation}
\mathfrak{D}_{\infty} = \bigcap \limits_{k\in\mathbb{Z}} \mathfrak{D}_{\Phi_{k}}
\end{equation}
where for each $k$ integer we denote
\[
\Phi_{k}=\left\{
\begin{array}
[c]{cc}
\Phi^{k} & k\geq0
\\
 \ \ \Phi^{\sharp\left\vert k\right\vert } & k<0
\end{array}
\right.
\]
while with $\mathfrak{D}_{\Phi_{k}}$ we have set the von Neumann algebra of the multiplicative domains of the dynamics $\Phi_{k}$.
\\
Following \cite{robinson82}, for each $a,b\in\mathfrak{M}$ and integers $k$ we define:
\begin{equation}
\label{sesqui}
S_{k}(a,b)=\Phi_{k}(a^*b)-\Phi_{k}(a^*)\Phi_{k}(b)\in\mathfrak{M}
\end{equation}
and we have these simple relations:
\begin{itemize}
\item[ \textit{a} - ] $S_{k}(a,a)\geq0$ for all $a\in\mathfrak{M}$ and integers $k$;
\item[ \textit{b} - ] $S_{k}(a,b)^*=S_{k}(b,a)$ for all $a,b\in\mathfrak{M}$ and integers $k$; 
\item[ \textit{c} - ] $d\in\mathfrak{D}_{\infty}$ if, and only if $S_{k}(d,d)=S_{k}(d^*,d^*)=0$ \qquad for all integers $k$;
\item[ \textit{d} - ] $d\in\mathfrak{D}_{\infty}$ if, and only if $\varphi(S_{k}(d,d))=\varphi(S_{k}(d^*,d^*))=0$ \quad for all integers $k$; 
\item[ \textit{e} - ] The map $a,b\in\mathfrak{M}\rightarrow\varphi(S_{k}(a,b))$ for all integers $k$, is a sesquilinear form, hence
\[
|\varphi(S_{k}(a,b))|^2\leq\varphi(S_{k}(a,a)) \varphi(S_{k}(b,b)) \qquad \ a,b\in\mathfrak{M} 
\]
\end{itemize}
We observe that $\Phi(\mathfrak{D}_{\infty})\subset\mathfrak{D}_{\infty}$ and $\Phi^{\sharp}(\mathfrak{D}_{\infty})\subset\mathfrak{D}_{\infty}$. Indeed for each element $d\in\mathfrak{D}_{\infty}$ and integer $k$ we have
\newline
\[
\varphi(S_{k}(\Phi(d),\Phi(d))=\varphi(S_{k+1}(d,d)=0
\]
and
\[
\varphi(S_{k}(\Phi^{\sharp}(d),\Phi^{\sharp}(d))=\varphi(S_{k-1}(d,d)=0
\]
Furthermore $d^*\in\mathfrak{D}_{\infty}$ thus we obtain also 
\[
\varphi(S_{k}(\Phi(d)^*,\Phi(d)^*)=\varphi(S_{k}(\Phi^{\sharp}(d)^*,\Phi^{\sharp}(d)^*)=0
\]
It follows that restriction of the map $\Phi$ at von Neumann algebra $\mathfrak{D}_{\infty}$ it is a *-automorphism where $\Phi(\Phi^{\sharp}(d))=\Phi^{\sharp}(\Phi(d))=d$ for all $d\in\mathfrak{D}_{\infty}$. 
\\

We summarize the results obtained in following statement:
\begin{proposition}
\label{automorphism}
Let $( \mathfrak{M},\Phi,\varphi)$ be a quantum dynamical system. The map $\Phi_{\infty}:\mathfrak{D}_{\infty}\rightarrow\mathfrak{D}_{\infty}$ where $\Phi_{\infty}(d)=\Phi(d)$ for all $d\in\mathfrak{D}_{\infty}$, is a *-automorphism of von Neumann algebra. 
\newline
Furthermore if there is a von Neumann subalgebra $\mathfrak B$ of $\mathfrak{M}$ such that the restriction of $\Phi$ to $\mathfrak B$ is a *-automorphism, then we obtain $\mathfrak B\subset\mathfrak{D}_{\infty}$.
\newline
We have a (maximal) reversible quantum dynamical systems  $(\mathfrak{D}_{\infty},\Phi_{\infty},\varphi_{\infty})$ where  the normal state $\varphi_{\infty}$ and the $\varphi_{\infty}$-adjoint $\Phi_{\infty}^{\sharp}$, are respectively the restriction of $\varphi$ and $\Phi^{\sharp}$ to the von Neumann algebra $\mathfrak{D}_{\infty}$. 
\end{proposition}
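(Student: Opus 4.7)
The first claim has essentially been prepared in the paragraphs preceding the proposition: one already knows that $\Phi(\mathfrak{D}_\infty)\subset\mathfrak{D}_\infty$, $\Phi^\sharp(\mathfrak{D}_\infty)\subset\mathfrak{D}_\infty$, and $\Phi\circ\Phi^\sharp=\Phi^\sharp\circ\Phi=\mathrm{id}$ on $\mathfrak{D}_\infty$. Since $\mathfrak{D}_\infty\subset\mathfrak{D}_\Phi$, the restriction $\Phi_\infty$ is automatically a normal unital *-homomorphism, and the two inverse relations upgrade it to a *-automorphism with $\Phi_\infty^{-1}=\Phi^\sharp|_{\mathfrak{D}_\infty}$. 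Setting $\varphi_\infty=\varphi|_{\mathfrak{D}_\infty}$ and $\Phi_\infty^\sharp=\Phi^\sharp|_{\mathfrak{D}_\infty}$, the $\varphi_\infty$-adjointness identity is simply the restriction of the ambient one, and the triple $(\mathfrak{D}_\infty,\Phi_\infty,\varphi_\infty)$ is then reversible by construction.

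The substantive content is the maximality statement. Let $\mathfrak{B}\subset\mathfrak{M}$ be a von Neumann subalgebra such that $\Phi|_{\mathfrak{B}}$ is a *-automorphism, and denote by $\Psi:\mathfrak{B}\to\mathfrak{B}$ its algebraic inverse. Because $\Phi|_{\mathfrak{B}}$ is a *-homomorphism with $\Phi(\mathfrak{B})=\mathfrak{B}$, iteration yields $\mathfrak{B}\subset\mathfrak{D}_{\Phi^n}$ for every $n\geq 0$; the delicate point is to establish $\mathfrak{B}\subset\mathfrak{D}_{\Phi^{\sharp n}}$ for every $n\geq 1$, and I propose to reach this by first identifying $\Phi^\sharp|_{\mathfrak{B}}=\Psi$.

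The key tool is the identity $U_{\Phi^\sharp,\varphi}=U_{\Phi,\varphi}^*$, which is read off the defining $\varphi$-adjoint equation inside the GNS representation. Applying Proposition~\ref{prop0}(a) to $b^*\in\mathfrak{B}\subset\mathfrak{D}_\Phi$ and taking adjoints gives
\[
\pi_\varphi(b)\,U_{\Phi^\sharp,\varphi}=U_{\Phi^\sharp,\varphi}\,\pi_\varphi(\Phi(b)),\qquad b\in\mathfrak{B}.
\]
Putting $c:=\Phi(b)\in\mathfrak{B}$, so that $b=\Psi(c)$, this rewrites as $U_{\Phi^\sharp,\varphi}\pi_\varphi(c)=\pi_\varphi(\Psi(c))U_{\Phi^\sharp,\varphi}$; evaluating at $\Omega_\varphi$ and using both $U_{\Phi^\sharp,\varphi}\Omega_\varphi=\Omega_\varphi$ and the separating character of $\Omega_\varphi$ forces $\Phi^\sharp(c)=\Psi(c)$ for all $c\in\mathfrak{B}$. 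Hence $\Phi^\sharp|_{\mathfrak{B}}=\Psi$ is itself a *-automorphism of $\mathfrak{B}$, and iterating delivers $\mathfrak{B}\subset\mathfrak{D}_{\Phi^{\sharp n}}$ for every $n\geq 1$, whence $\mathfrak{B}\subset\mathfrak{D}_\infty$. The anticipated main obstacle is precisely this identification of $\Phi^\sharp|_{\mathfrak{B}}$ with the algebraic inverse $\Psi$; once that is in hand, everything else in the statement is either a direct restriction or has already been verified in the text.
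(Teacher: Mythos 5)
Your proposal is correct, and its overall strategy coincides with the paper's: both treat the first and third claims as restrictions of the facts established just before the proposition ($\Phi$- and $\Phi^{\sharp}$-invariance of $\mathfrak{D}_{\infty}$ and the inverse relations there), and both reduce the maximality claim to identifying the algebraic inverse $\Psi$ of $\Phi|_{\mathfrak{B}}$ with $\Phi^{\sharp}|_{\mathfrak{B}}$, after which $\mathfrak{B}\subset\mathfrak{D}_{\Phi^{\sharp n}}$ for all $n$ follows by iteration exactly as in the paper. Where you genuinely diverge is the mechanism for that identification. The paper does it by a one-line state computation: for $a\in\mathfrak{M}$ and $b\in\mathfrak{B}$, $\varphi(a\Psi(b))=\varphi(\Phi(a\Psi(b)))=\varphi(\Phi(a)\Phi(\Psi(b)))=\varphi(\Phi(a)\,b)=\varphi(a\,\Phi^{\sharp}(b))$, using the $\Phi$-invariance of $\varphi$, the bimodule property of the multiplicative domain (via $\Psi(b)\in\mathfrak{B}\subset\mathfrak{D}_{\Phi}$), the adjointness relation, and finally faithfulness of $\varphi$. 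You instead work in the GNS representation: Proposition~\ref{prop0}(a) applied to $b^{*}$, an adjoint, the identity $U_{\Phi^{\sharp},\varphi}=U_{\Phi,\varphi}^{*}$, and the separating character of $\Omega_{\varphi}$. Both routes are sound and use faithfulness in equivalent guises (directly versus through the separating vector). The paper's computation is the more elementary: it never leaves the algebra and does not need $U_{\Phi^{\sharp},\varphi}=U_{\Phi,\varphi}^{*}$, which in your argument is an extra lemma requiring that $\Phi^{\sharp}$ be $*$-preserving (true, since Schwartz maps are positive, but worth stating). In compensation, your route proves something slightly stronger along the way, namely the operator intertwining $U_{\Phi^{\sharp},\varphi}\pi_{\varphi}(c)=\pi_{\varphi}(\Psi(c))\,U_{\Phi^{\sharp},\varphi}$ on all of $\mathcal{H}_{\varphi}$, not merely its evaluation at $\Omega_{\varphi}$; in fact, evaluating at $\Omega_{\varphi}$ immediately yields $\Phi^{\sharp}(\Phi(b))=b$ on $\mathfrak{B}$, so your last step could be shortened. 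In short: correct proof, same skeleton, with the central identity verified operator-theoretically rather than by the paper's direct state-algebraic computation.
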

\begin{proof}
We prove that if the restriction of $\Phi$ to $\mathfrak B$ is an automorphism, then $\mathfrak B \subset \mathfrak D_\infty$.
\\
In fact we have that $\mathfrak B \subset \mathfrak D_{\Phi^n}$ for all natural number $n$ and if $\Psi:\mathfrak B \rightarrow \mathfrak B$ is the map such that $\Psi(\Phi(b))=\Phi(\Psi(b))=b$ for all $b \in\mathfrak B$, then $\Psi(b)=\Phi^\sharp(b)$, since   
\[
\varphi(a \Psi(b) )= \varphi(\Phi(a \Psi(b))) =\varphi(\Phi(a) \Phi(\Psi(b))=\varphi (\Phi(a) b))=\varphi( a \Phi^\sharp(b) )
\]
for all $a \in \mathfrak M$. It follows that $\mathfrak B$ is also $\Phi^\sharp$-invariant, hence $\mathfrak B \subset \mathfrak D_{ \Phi^{n \sharp}}$ for all natural number $n$.
\end{proof}
It is clear that $\mathfrak D_\infty$ is $\Phi_k$-invariant for all integers $k$ and is invariant under automorphism group $\sigma_t^\varphi$ and by previous decomposition theorem we can say that (see \cite{hel-Blac} theorem 6):
\begin{proposition}
\label{teo-dec2}
If $( \mathfrak{M},\Phi,\varphi)$ is a quantum dynamical system, then there is a conditional expectation $\mathcal E_\infty:\mathfrak M \rightarrow \mathfrak D_\infty$such that
\begin{itemize}
\item[a -] $\varphi \circ \mathcal E_\infty=\varphi$;
\item[b -] $\mathfrak D_\infty^{\perp_\varphi}=\ker \mathcal E_\infty$;
\item[c -] $\mathfrak M = \mathfrak D_\infty \oplus \mathfrak D_\infty^{\perp_\varphi}$;
\item[d -] $\Phi_k (\mathfrak D_\infty^{\perp_\varphi}) \subset \mathfrak D_\infty^{\perp_\varphi}$ for all integers $k$;
\item[e -] $\mathcal E_\infty(\Phi_k (a))=\Phi_k (\mathcal E_\infty(a))$ for all $a\in\mathfrak M$ and integer $k$;
\item[f -] $\mathcal H_\varphi=\mathcal H_\infty \oplus \mathcal K_\infty$ where $\mathcal H_\infty$ and $\mathcal K_\infty$ denotes the linear closure of $\pi_\varphi (\mathfrak D_\infty)\Omega_\varphi$ and of $\pi_{\varphi}(\mathfrak D_\infty^{\perp_{\varphi}})\Omega_{\varphi}$ respectively.
\end{itemize}
 \end{proposition}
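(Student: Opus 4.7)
The proof is essentially an application of Theorem \ref{teo-dec} and Proposition \ref{hilbertdeco2} to the subalgebra $\mathfrak{R}=\mathfrak{D}_\infty$; items (a), (b), (c), (f) will follow almost immediately once modular invariance of $\mathfrak{D}_\infty$ is in hand, and the only substantive work lies in the dynamical invariance statements (d) and (e).

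First I would invoke the fact, already observed just before the statement, that $\mathfrak{D}_\infty$ is $\sigma_t^\varphi$-invariant. Applying Takesaki's theorem, as in the proof of Theorem \ref{teo-dec}, then delivers a normal conditional expectation $\mathcal{E}_\infty:\mathfrak{M}\to\mathfrak{D}_\infty$ with $\varphi\circ\mathcal{E}_\infty=\varphi$ together with the algebraic splitting $\mathfrak{M}=\mathfrak{D}_\infty\oplus\mathfrak{D}_\infty^{\perp_\varphi}$. The identification $\mathfrak{D}_\infty^{\perp_\varphi}=\ker\mathcal{E}_\infty$ is exactly the identity $\mathfrak{R}^{\perp_\varphi}=\{a:\mathcal{E}(a)=0\}$ recorded in the proof of Theorem \ref{teo-dec}, settling (a), (b), (c). Item (f) is then just Proposition \ref{hilbertdeco2} specialized to $\mathfrak{R}=\mathfrak{D}_\infty$, with $\mathcal{H}_\infty,\mathcal{K}_\infty$ playing the role of $\mathcal{H}_o,\mathcal{K}_o$.

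The key step is (d). Fix $a_\bot\in\mathfrak{D}_\infty^{\perp_\varphi}$, an integer $k$, and $x\in\mathfrak{D}_\infty$. By Proposition \ref{automorphism}, $\Phi_\infty$ is a $*$-automorphism of $\mathfrak{D}_\infty$ whose inverse coincides with $\Phi^\sharp|_{\mathfrak{D}_\infty}$, so there exists a unique $\tilde{x}\in\mathfrak{D}_\infty$ with $x=\Phi_k(\tilde{x})$. Since $\tilde{x}\in\mathfrak{D}_\infty\subset\mathfrak{D}_{\Phi_k}$, the multiplicative property of the domain gives
\[
\Phi_k(a_\bot)^* x \;=\; \Phi_k(a_\bot^*)\,\Phi_k(\tilde{x}) \;=\; \Phi_k(a_\bot^*\tilde{x}).
\]
Both $\Phi$ and $\Phi^\sharp$ leave $\varphi$ invariant (stationarity of $\varphi$ under $\Phi$ is hypothesized; invariance under $\Phi^\sharp$ follows from $\Phi^\sharp(1)=1$ via $\varphi(\Phi^\sharp(b))=\varphi(\Phi^\sharp(b)\cdot 1)=\varphi(b\,\Phi(1))=\varphi(b)$), so $\varphi(\Phi_k(a_\bot)^* x)=\varphi(a_\bot^*\tilde{x})=0$, the last equality because $\tilde{x}\in\mathfrak{D}_\infty$. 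Hence $\Phi_k(a_\bot)\in\mathfrak{D}_\infty^{\perp_\varphi}$.

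Item (e) is then immediate from (d): write $a=\mathcal{E}_\infty(a)+(a-\mathcal{E}_\infty(a))$ with the orthogonal summand in $\mathfrak{D}_\infty^{\perp_\varphi}$; applying $\Phi_k$ gives a decomposition of $\Phi_k(a)$ whose first piece lies in $\mathfrak{D}_\infty$ (by $\Phi_k$-invariance of $\mathfrak{D}_\infty$, already noted before the statement) and whose second piece lies in $\mathfrak{D}_\infty^{\perp_\varphi}$ by (d); uniqueness of the splitting forces $\mathcal{E}_\infty(\Phi_k(a))=\Phi_k(\mathcal{E}_\infty(a))$. The only delicate point in the whole plan is the invariance argument in (d), which relies crucially on the bijectivity of $\Phi_\infty$ to pull $x$ back inside $\mathfrak{D}_\infty$; everything else is bookkeeping on top of Theorem \ref{teo-dec}, Proposition \ref{hilbertdeco2} and Proposition \ref{automorphism}.
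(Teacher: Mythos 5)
Your proposal is correct and takes essentially the same route as the paper: modular invariance of $\mathfrak{D}_\infty$ together with Theorem \ref{teo-dec} and Proposition \ref{hilbertdeco2} delivers (a), (b), (c), (f), and the substantive work is the invariance statement (d), exactly as in the paper's proof. The only local divergences are in implementation: for (d) you pull $x\in\mathfrak{D}_\infty$ back through the automorphism $\Phi_\infty$ and use the bimodule property of $\mathfrak{D}_{\Phi_k}$ plus $\varphi\circ\Phi_k=\varphi$, whereas the paper reaches the same cancellation in one stroke via the $\varphi$-adjoint identity $\varphi(\Phi_k(d_\bot)^*x)=\varphi(d_\bot^*\Phi_{-k}(x))=0$; and for (e) you conclude by uniqueness of the splitting from (d) and $\Phi_k$-invariance of $\mathfrak{D}_\infty$ --- a slightly cleaner step than the paper's chain of equalities showing $\varphi(b\,\mathcal{E}_\infty(\Phi_k(a)))=\varphi(b\,\Phi_k(\mathcal{E}_\infty(a)))$ for all $b$, which then needs faithfulness of $\varphi$ to finish.
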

\begin{proof}
The statements $(a)$, $(b)$ and $(c)$  are simple consequence of theorem \ref{teo-dec}.
\\
For the statement $(d)$, if $ d_\bot \in \mathfrak D_\infty^{\perp_\varphi}$ then for any integer $k$ and $x \in \mathfrak D_\infty$, we have:
\[
\varphi(\Phi_k(d_\bot)^*x)=\varphi(d_\bot^* \Phi_{-k}(x))=0
\]
since $\Phi_{-k}(x))\in \mathfrak D_\infty$.
\\
For the statement $(e)$, for any $a,b \in \mathfrak M$ we obtain
\begin{eqnarray*}
\varphi(b \mathcal E_\infty(\Phi_k(a))) &=& \varphi((b_\| + b_\bot)\mathcal E_\infty(\Phi_k(a)))=\varphi(b_\|\mathcal E_\infty(\Phi_k(a))=
\varphi(\mathcal E_\infty(b\| \Phi_k(a)))=
\\
&=& \varphi( b\| \Phi_k(a)))= \varphi(\Phi_{-k}(b_\|) a)=\varphi(\mathcal E_\infty(\Phi_{-k}(b_\|)a))=
\\
&=& \varphi(\mathcal E_\infty(\Phi_{-k} (b_\|)a))=
\varphi(\Phi_{-k}(b_\|) \mathcal E_\infty(a))= \varphi( b_\| \Phi_k(\mathcal E_\infty (a))=
\\
&=&  \varphi( (b_\| +b_\bot ) \Phi_k(\mathcal E_\infty (a))= \varphi( b \Phi_k(\mathcal E_\infty (a)) 
\end{eqnarray*}
where we have write $b=b_\| + b_\bot$ with $b_\|=\mathcal E_\infty(b)$.
\end{proof}
The quantum dynamical system $( \mathfrak D_\infty,\Phi_\infty,\varphi_\infty)$ is called \textit{the reversible part }of the quantum dynamical system $( \mathfrak{M},\Phi,\varphi)$.
\\
Furthermore a quantum dynamical system is called \textit{completely irreversible} if $\mathfrak D_\infty=\mathbb C 1$.
\\
In this case for all  $a\in\mathfrak M$ we obtain $a= \varphi(a) 1 + a_\bot $ and we can write
$$ \mathfrak M = \mathbb C 1 \oplus \ker \varphi $$
Let $( \mathfrak{M},\Phi,\varphi)$ be a completely irreversible quantum dynamical system, if the von Neumann algebra $\mathfrak M$ is not trivial then there is least a not trivial projector $P\in\mathfrak M$, such that 
\begin{equation}
\label{purestate}
 \varphi (P) - \varphi (P)^2 >0 
\end{equation}
In fact,  we can write  $P=\varphi(P) 1 + P_\bot$ where $P_\bot \in \mathfrak D_\infty^{\perp_\varphi}$ and $\varphi(P_\bot ^2)=  \varphi (P) - \varphi (P)^2$ because $P_\bot ^2 + 2 \varphi(P) P_\bot + \varphi(P)^2 1= \varphi(P) 1 + P_\bot$- Therefore if  $\varphi (P)=\varphi (P)^2$, then $P_\bot = 0 $.
\\
In section 4 we will find the conditions when $\mathfrak D_\infty= \mathbb C 1$ (see also \cite{carbone2} section 2  for the case $\mathfrak D_\infty^+= \mathbb C 1$).
\\
We observe that if $\mathcal A(\mathcal P)$ is the von Neumann algebra generated by the set of all orthogonal projections $p\in\mathfrak M$ such that $\Phi_k(p)=\Phi_k(p)^2$ for all integers $k$, then $\mathfrak D_\infty=\mathcal A(\mathcal P)$ (see \cite{carbone}, corollary 2). 
\\

In the decoherence theory the set $\mathfrak{D}_\infty$ is called algebra of effective observables of our quantum dynamical system (see  e.g. \cite{blanchard03})  and we underline that the previous theorem is a particular case of a more general theorem that is found in \cite{lu-olk}. 
\\ 
We observe that for all natural number $n$ we obtain
$$\Phi^{\sharp n}(\Phi^n(d))=d \qquad  d\in\mathfrak{D}_\infty^+$$
and 
$$\Phi^n(\mathfrak{D}_\infty^+)\subset \mathfrak D_{\Phi^{\sharp n}}$$
We can say more:
\begin{remark}
The algebra of effective observables is independent by the stationary state $\varphi$, since
\begin{equation*}
\mathfrak{D}_\infty=\mathcal C_\Phi
\end{equation*}
In fact  we have that $\mathfrak{D}_\infty\subset\underset{n\in\mathbb{N}}{\bigcap}\Phi^n(\mathfrak{D}_\infty^+)$ since $\mathfrak{D}_\infty\subset\mathfrak{D}_\infty^+$ 
and $\mathcal C_\Phi \subset \mathfrak{D}_\infty$ for theorem \ref{automorphism}.
\end{remark}
The next subsections are of the simple consequences of the previous propositions.
\subsection{Ergodicity properties}
In this subsection we prove that the ergodic properties of a quantum dynamical system  depends  on its reversible part, determined from the algebra the effective observables $\mathfrak D_\infty$.
\\
We consider a quantum  dynamical system $( \mathfrak{M},\Phi,\varphi)$ with   $\varphi$-adjoint $\Phi^{\sharp}$.
\\
We recall that the quantum dynamical system is ergodic if per any $a,b\in\mathfrak{M}$ we have:
\[
\underset{N\rightarrow \infty }{\lim }\dfrac{1}{N+1}\sum\limits_{k=0}^{N} \left[ \varphi(a\Phi^k(b))-\varphi(a)\varphi(b) \right]=0
\]
while it is weakly mixing if
\[
\underset{N\rightarrow \infty }{\lim }\dfrac{1}{N+1}\sum\limits_{k=0}^{N} \left| \varphi(a\Phi^k(b))-\varphi(a)\varphi(b) \right|=0
\]
We will use again the following notations $a_\|=\mathcal{E}_{\infty}(a)$ while $a_\bot=a-a_\|$ for all $a\in\mathfrak{M}$, where $\mathcal{E}_{\infty}:\mathfrak{M}\rightarrow \mathfrak{D}_\infty $ is the conditional expectation of decomposition theorem \ref{teo-dec2}.
\\
We have the following proposition:
\begin{proposition}
The quantum dynamical system $( \mathfrak{M},\Phi,\varphi)$ is ergodic [\textit{weakly mixing}] if, and only if the reversible quantum dynamical system $( \mathfrak{D}_\infty,\Phi_\infty,\varphi_\infty)$ is ergodic  [\textit{weakly mixing}].
\end{proposition}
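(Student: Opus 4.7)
The plan is to reduce arbitrary two-point correlations to correlations on the reversible part, plus a noise remainder, using the decomposition $\mathfrak{M}=\mathfrak{D}_\infty\oplus\mathfrak{D}_\infty^{\perp_\varphi}$ and the conditional expectation $\mathcal{E}_\infty$ of Proposition~\ref{teo-dec2}. For $a,b\in\mathfrak{M}$, set $a_\|=\mathcal{E}_\infty(a)$, $a_\bot=a-a_\|$, and likewise for $b$; since $\mathfrak{D}_\infty^{\perp_\varphi}\subset\ker\varphi$ we have $\varphi(a)=\varphi_\infty(a_\|)$ and $\varphi(b)=\varphi_\infty(b_\|)$. Expanding $a\,\Phi^k(b)$ into four pieces, using Proposition~\ref{teo-dec2}(d) and the $\mathfrak{D}_\infty$-bimodule property $\mathfrak{D}_\infty\cdot\mathfrak{D}_\infty^{\perp_\varphi}\cup\mathfrak{D}_\infty^{\perp_\varphi}\cdot\mathfrak{D}_\infty\subset\mathfrak{D}_\infty^{\perp_\varphi}$ (itself a consequence of $\mathfrak{D}_\infty^{\perp_\varphi}=\ker\mathcal{E}_\infty$ and the bimodule property of $\mathcal{E}_\infty$), the two cross terms $\varphi(a_\|\,\Phi^k(b_\bot))$ and $\varphi(a_\bot\,\Phi^k(b_\|))$ vanish for every $k$, leaving the clean identity
\[
\varphi(a\,\Phi^k(b))-\varphi(a)\varphi(b)=\bigl[\varphi_\infty(a_\|\,\Phi_\infty^k(b_\|))-\varphi_\infty(a_\|)\varphi_\infty(b_\|)\bigr]+\varphi(a_\bot\,\Phi^k(b_\bot)).
\]

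The forward implications (ergodic or weakly mixing of the full system $\Rightarrow$ same for the reversible part) then follow by restricting to $a,b\in\mathfrak{D}_\infty\subset\mathfrak{M}$, for which the noise vanishes identically. For the converse, the reversible-part bracket vanishes in Cesàro (resp.\ absolute-value Cesàro) mean by hypothesis, so it is enough to control the noise
\[
\varphi(a_\bot\,\Phi^k(b_\bot))=\bigl\langle\pi_\varphi(a_\bot^*)\Omega_\varphi,\,U_{\Phi,\varphi}^{\,k}\pi_\varphi(b_\bot)\Omega_\varphi\bigr\rangle,
\]
both vectors lying in $\mathcal{K}_\infty$ by Proposition~\ref{teo-dec2}(f). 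In the ergodic case the Hilbert-space mean ergodic theorem makes $\tfrac{1}{N+1}\sum_{k=0}^{N}U_{\Phi,\varphi}^{\,k}$ converge strongly to the orthogonal projection onto $\ker(I-U_{\Phi,\varphi})$, so the Cesàro average of the noise vanishes precisely when $\mathcal{K}_\infty\perp\ker(I-U_{\Phi,\varphi})$. The weakly mixing case is handled by the analogous Koopman--von Neumann criterion, reducing matters to the absence of peripheral eigenvectors of $U_{\Phi,\varphi}$ in $\mathcal{K}_\infty$.

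Both reductions come down to the key lemma I expect to be the main obstacle: every peripheral eigenvector of $U_{\Phi,\varphi}$ lies in $\mathcal{H}_\infty$. Given $U_{\Phi,\varphi}\xi=\lambda\xi$ with $|\lambda|=1$, the contraction identity $\|\xi\|^2-\|U_{\Phi,\varphi}\xi\|^2=\langle(I-U_{\Phi,\varphi}^{\,*}U_{\Phi,\varphi})\xi,\xi\rangle=0$ forces $U_{\Phi,\varphi}^{\,*}\xi=\bar\lambda\xi$. A mean-ergodic argument in $\mathfrak{M}$ applied to the rescaled Schwarz dynamics (the Cesàro averages $\tfrac{1}{N+1}\sum_{k=0}^{N}\bar\lambda^k\Phi^k(a)$ for $a\in\mathfrak{M}$) then reduces to $\xi=\pi_\varphi(a)\Omega_\varphi$ with $\Phi(a)=\lambda a$ and $\Phi^\sharp(a)=\bar\lambda a$. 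The Schwarz inequality $\Phi(a^*a)\geq\Phi(a)^*\Phi(a)=a^*a$, together with $\varphi\circ\Phi=\varphi$ and faithfulness of $\varphi$, forces $\Phi(a^*a)=\Phi(a)^*\Phi(a)$; the same argument for $aa^*$, iterated to all $\Phi^n$ and $\Phi^{\sharp n}$, puts $a\in\mathfrak{D}_\infty$, hence $\xi\in\mathcal{H}_\infty$. The genuinely delicate ingredients are the quantum mean ergodic step in $\mathfrak{M}$ (to represent an abstract eigenvector as $\pi_\varphi(a)\Omega_\varphi$ with an honest $a\in\mathfrak{M}$) and, for weak mixing, the Sz.-Nagy--Foias spectral decomposition of $U_{\Phi,\varphi}$ alluded to in the introduction.
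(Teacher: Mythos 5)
Your proposal is correct, and its first half (the splitting $a=a_\|+a_\bot$, the vanishing of the cross terms via the bimodule property of $\mathcal E_\infty$, and the resulting identity $\varphi(a\Phi^k(b))-\varphi(a)\varphi(b)=[\varphi_\infty(a_\|\Phi_\infty^k(b_\|))-\varphi_\infty(a_\|)\varphi_\infty(b_\|)]+\text{noise}$, plus restriction to $a,b\in\mathfrak D_\infty$ for the forward implication) coincides with the paper's proof; but you diverge genuinely on how the noise dies. The paper kills it \emph{termwise}: by its Section~3 proposition every $\sigma$-limit point of $\{\Phi^k(a)\}_k$ lies in $\mathfrak D_\infty$, hence $\Phi^k(d_\bot)\to 0$ in the $\sigma$-topology for every $d_\bot\in\mathfrak D_\infty^{\perp_\varphi}$ (relation (\ref{limitedebole})), so $\varphi(a\,\Phi^k(b_\bot))\to 0$ as $k\to\infty$ and both the Ces\`aro and absolute-value Ces\`aro means vanish with no spectral analysis at all. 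You instead control the noise only \emph{on average}, through the Hilbert-space mean ergodic theorem and your key lemma that every unimodular eigenvector of $U_{\Phi,\varphi}$ lies in $\mathcal H_\infty$ (plus, for weak mixing, a Jacobs--de Leeuw--Glicksberg/Wiener splitting, or Nagy--Foias absolute continuity of the unitary dilation on the c.n.u.\ part). Your lemma is true and your sketch of it is sound: $\sigma$-weak limit points $b$ of $\frac{1}{N+1}\sum_{k}\bar\lambda^k\Phi^k(a)$ satisfy $\Phi(b)=\lambda b$ and $\pi_\varphi(b)\Omega_\varphi=P_\lambda\pi_\varphi(a)\Omega_\varphi$; the Schwartz inequality with $\varphi$ faithful and invariant forces $b\in\mathfrak D_{\Phi^n}$ for all $n$, and since $U_{\Phi,\varphi}^*$ implements $\Phi^\sharp$ and $\Omega_\varphi$ is separating, also $\Phi^\sharp(b)=\bar\lambda b$ and $b\in\mathfrak D_{\Phi^{\sharp n}}$, so $b\in\mathfrak D_\infty$. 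One precision is needed there: this argument shows the $\lambda$-eigenspace of $U_{\Phi,\varphi}$ is the \emph{closure} of $\pi_\varphi(\{b\in\mathfrak D_\infty:\Phi(b)=\lambda b\})\Omega_\varphi$, not that a given eigenvector equals $\pi_\varphi(a)\Omega_\varphi$ for an honest $a\in\mathfrak M$ as you assert; the closure statement is all you get, and it suffices for $\xi\in\mathcal H_\infty$. Comparing costs: the paper's route is shorter because its termwise decay is strictly stronger than your eigenvector lemma (the lemma follows from it in one line, not conversely), while your route buys a sharper spectral byproduct --- all peripheral point spectrum of $U_{\Phi,\varphi}$ is carried by $\mathcal H_\infty$, which meshes nicely with the paper's Section~3 comparison $\mathcal H_\infty\subset\mathcal H_0$ --- at the price of the heavier operator-theoretic machinery exactly where the weak-mixing case needs it.
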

\begin{proof}
For any $a,b\in\mathfrak{M}$ we have 
\[
\varphi(a\Phi^k(b))-\varphi(a)\varphi(b)=\varphi(a\Phi^k(b_\|))+ \varphi(a\Phi^k(b_\bot))-\varphi(a_\|)\varphi(b_\|)
\]
Moreover $\underset{N\rightarrow \infty }{\lim }\dfrac{1}{N+1}\sum\limits_{k=0}^{N} \varphi(a\Phi^k(b_\bot))=0$, because by relation  (\ref{limitedebole}) for every $a\in\mathfrak{M}$, we have $\underset{k\rightarrow \infty }{\lim }\varphi(a\Phi^k(b_\bot))=0$, hence 
\begin{align*}
\underset{N\rightarrow \infty }{\lim }\dfrac{1}{N+1}\sum\limits_{k=0}^{N} \left[ \varphi(a\Phi^k(b)) - \varphi(a)\varphi(b) \right]=\underset{N\rightarrow \infty }{\lim }\dfrac{1}{N+1}\sum\limits_{k=0}^{N} \left[ \varphi(a\Phi^k(b_\|))-\varphi(a_\|)\varphi(b_\|) \right]
\end{align*}
with $\varphi(a\Phi^k(b_\|))=\varphi(a_\|\Phi^k(b_\|))+\varphi(a_\bot \Phi^k(b_\|))$ and $\varphi(a_\bot \Phi^k(b_\|))=0$ since the element $a_\bot \Phi^k(b_\|)\in\mathfrak{D}_\infty^{\perp_\varphi}$.\\
It follows that
\begin{align*}
\underset{N\rightarrow \infty }{\lim }\dfrac{1}{N+1}\sum\limits_{k=0}^{N} & \left[ \varphi(a\Phi^k(b)) - \varphi(a)\varphi(b) \right]
=\underset{N\rightarrow \infty }{\lim }\dfrac{1}{N+1}\sum\limits_{k=0}^{N} \left[ \varphi_\infty(a_\|\Phi_\infty ^k(b_\|))-\varphi_\infty(a_\|)\varphi_\infty(b_\|) \right]
\end{align*}
For the weakly mixing properties we have
\begin{align*}
&\underset{N\rightarrow \infty }{\lim }\dfrac{1}{N+1}\sum\limits_{k=0}^{N} \left| \varphi(a\Phi^k(b))-\varphi(a)\varphi(b) \right|= \\
&=\underset{N\rightarrow \infty }{\lim }\dfrac{1}{N+1}\sum\limits_{k=0}^{N} \left| \varphi(a_\|\Phi^k(b_\| )) + \varphi(a_\bot \Phi^k(b_\|))+ \varphi(a\Phi^k(b_\bot))-\varphi(a)\varphi(b) \right|=\\
&=\underset{N\rightarrow \infty }{\lim }\dfrac{1}{N+1}\sum\limits_{k=0}^{N} \left| \varphi_\infty(a_\|\Phi_\infty^k(b_\| )) -\varphi_\infty(a_\|)\varphi_\infty(b_\|) + \varphi(a\Phi^k(b_\bot)) \right|
\end{align*}
Moreover
\[
\underset{N\rightarrow \infty }{\lim }\dfrac{1}{N+1}\sum\limits_{k=0}^{N} | \varphi(a\Phi^k(b_\bot)) |=0, \qquad a,b\in\mathfrak{M}
\]
If our quantum dynamical system $( \mathfrak{M},\Phi,\varphi)$ is weakly ergodic then 
\begin{align*}
\underset{N\rightarrow \infty }{\lim }\dfrac{1}{N+1}\sum\limits_{k=0}^{N} \left| \varphi_\infty(a_\|\Phi_\infty^k(b_\| )) -\varphi_\infty(a_\|)\varphi_\infty(b_\|) \right|=0
\end{align*}
since
\begin{align*}
\left| \ | \varphi_\infty(a_\|\Phi_\infty^k(b_\| )) -\varphi_\infty(a_\|)\varphi_\infty(b_\|) |- | \varphi(a\Phi^k(b_\bot)) | \  \right|  \leq \left| \varphi(a\Phi^k(b))-\varphi(a)\varphi(b) \right|,
\end{align*}
while if the reversible dynamical system $( \mathfrak{D}_\infty,\Phi_\infty,\varphi_\infty)$ is weakly mixing, then our quantum dynamical system is weakly mixing since
\begin{align*}
\left| \varphi(a\Phi^k(b))-\varphi(a)\varphi(b) \right| \leq \left| \varphi_\infty(a_\|\Phi_\infty^k(b_\| )) -\varphi_\infty(a_\|)\varphi_\infty(b_\|) \right| + \left| \varphi(a\Phi^k(b_\bot)) \right|
\end{align*}
\end{proof}
\subsection{ Particular *-Banach algebra}
Let $(\mathfrak{M}, \Phi, \varphi )$ be a quantum dynamical system and $\mathcal E_\infty:\mathfrak M \rightarrow \mathfrak D_\infty$ the map  of proposition \ref{teo-dec2}.
\\
We can define in set $\mathfrak{M}$ another frame of *-Banach algebra changing the product between elements of $\mathfrak{M}$. It is defined by 
\begin{equation}
a\times b   =a_\| \ b_\| + a_\| \ b_\bot + a_\bot \  b_\|  \qquad \qquad a,b\in\mathfrak{M}
\end{equation}
where we have denoted with $a_\|=\mathcal{E}_{\infty}(a)$ and with $a_\bot=a-a_\|$ for all $a\in\mathfrak{M}$.
\\
We observe again that $a_\| \ b_\bot, a_\bot \  b_\| \in\mathfrak{D}_\infty^{\perp_\varphi}$ since $\mathcal{E}_{\infty}(a_\| \ b_\bot)=a_\| \mathcal{E}_{\infty}(\ b_\bot)=0$ and $ \mathcal{E}_{\infty}(a_\bot \  b_\|)=\mathcal{E}_{\infty}(a_\bot) \  b_\|=0$. Moreover we have 
$$ a_\bot \times b_\bot = 0 $$
The $(\mathfrak{M},+,\times )$ is a Banach *-algebra with unit, since for any $a,b\in\mathfrak{M}$ we have:
\[
||a \times b||\leq||a|| \ ||b||
\]
We set with $\mathfrak{M}^\flat$ this Banach *-algebra.
\\
We note that $\mathfrak{M}^\flat$ it is not a C*-algebra. In fact for any $d_\bot\in\mathfrak{D}_\infty^{\perp_\varphi}, \ \ d_\bot\neq0$ we have that its spectrum in $\mathfrak{M}^\flat$ is  $\sigma(d_\bot)\subset\{ 0\}$ while $ || d_\bot ||\neq0$.
\\
We observe that for any $a,b\in\mathfrak{M}$ we have:
\[
\Phi(a \times b)=\Phi(a) \times \Phi(b)
\]
It follows that $\Phi:\mathfrak{M}^\flat\rightarrow \mathfrak{M}^\flat$ is a *-homomorphism of Banach Algebra.
\\
For $\varphi$-adjoint $\Phi^\sharp$ we have:
\[
\varphi(a \times \Phi(b))=\varphi(a_\| \ \Phi(b_\|))=\varphi(\Phi^\sharp(a_\|) \ b_\|)= \varphi(\Phi^\sharp (a) \times b )
\]
with $\Phi^\sharp:\mathfrak{M}^\flat\rightarrow \mathfrak{M}^\flat$ *-homomorphism of Banach Algebra.
\\
Moreover $\varphi(a^*\times a)=\varphi(a_\|^* a_\|)$ hence if $\varphi(a^*\times a)=0$ then $ a_\|=0 $, so $\varphi$ it is not a faithful state on $\mathfrak{M}^\flat$.
\\
It is easily to prove that for any $a,b\in\mathfrak{M}^\flat$ we obtain $\varphi(a^* \times b^* \times b \times a) = \varphi(a^*_\| \ b^*_\| \ b_\| \ a_\|)$ it follows that
\[
\varphi(a^* \times b^* \times b \times a)\leq ||b|| \ \varphi(a^* \times a)
\]
and we can build the GNS representation $( \mathcal{H}_\varphi^\flat,\pi_\varphi^\flat, \Omega_\varphi^\flat)$ of the state $\varphi$ on Banach * algebra $\mathfrak{M}^\flat$ that has the following properties \cite{fell}:
\\
The representation $\pi^\flat_\varphi:\mathfrak{M}^\flat\rightarrow \mathfrak{B}(\mathcal{H}^\flat_\varphi)$ is a continuous map i.e.  $||\pi^\flat_\varphi(a)||\leq||a||$ for all $a\in\mathfrak{M}^\flat$ while  $\Omega^\flat_\varphi$ is a cyclic vector for *-algebra $\pi^\flat_\varphi(\mathfrak{M}^\flat)$ and
\[
\varphi(a)=\langle \Omega^\flat_\varphi, \pi^\flat_\varphi(a)\Omega^\flat_\varphi \rangle_\flat \qquad a\in\mathfrak{M}^\flat
\]
Furthermore we have a unitary operator $U^\flat_\varphi:\mathcal{H}_\varphi^\flat\rightarrow \mathcal{H}_\varphi^\flat$ such that 
\[
\pi_\varphi^\flat(\Phi(a)=U^\flat_\varphi \pi_\varphi^\flat(a) U^{\flat *}_\varphi \qquad a\in\mathfrak{M}^\flat
\]
since $\Phi$ and $\Phi^\sharp$ are *-homomorphism of Banach algebra and 
\begin{eqnarray*}
U^\flat_\varphi\pi_\varphi^\flat(a) \pi^\flat_\varphi(b)\Omega^\flat_\varphi=
\pi_\varphi^\flat(\Phi(a \times b)) \Omega^\flat_\varphi =
\pi_\varphi^\flat(\Phi(a )) \pi_\varphi^\flat(\Phi(b ))\Omega^\flat_\varphi= \pi_\varphi^\flat(\Phi(a)) U^\flat_\varphi\pi^\flat_\varphi(b)\Omega^\flat_\varphi  
\end{eqnarray*}
The linear map $Z :\mathcal{H}_\varphi^\flat \rightarrow \mathcal{H}_\varphi$ as defined 
$Z \pi^\flat_\varphi(a) \Omega_\varphi^\flat=\pi_\varphi ( \mathcal E_\infty (a) )\Omega_\varphi $ for all $a \in \mathfrak M $ it is an isometry with adjoint 
$Z^* \pi_\varphi(a)\Omega_\varphi=\pi_\varphi^\flat(\mathcal E_\infty(a)) \Omega_\varphi^\flat$ for all $a \in \mathfrak{M}^\flat$. 
\\
Furthermore  we have  $Z U_\varphi^{\flat n}   = Z  U_{\Phi, \varphi}^n $ for all natural number $n$.
\subsection{Abelian algebra of effective observables}
We will prove that for any quantum dynamical system $(\mathfrak{M}, \Phi, \varphi)$ there is an abelian algebra 
$\mathcal A \subset \mathfrak D_\infty$   that contains the center $Z(\mathfrak D_\infty)$ of $\mathfrak D_\infty$ and with $\Phi(\mathcal A)\subset \mathcal A$.
\\ 
The question of the existence of an Abelian subalgebra which remains invariant under the action of a given quantum Markov semigroup are widely debated in \cite{atreb} and \cite{rebolledo}.
\\
We consider a discrete quantum process $(\mathfrak{M}, \Phi)$ with $\Phi$ a *-automorphism. We set with $\mathfrak{P}(\mathfrak{M})$ the pure states of $\mathfrak{M}$.
\\
It is well know that if $\omega(a)=0$ for all $\omega\in\mathfrak{P}(\mathfrak{M})$ then $a=0$ (see e.g. \cite{black}).
\\
For any $\omega\in\mathfrak{P}(\mathfrak{M})$ with $\mathfrak{D}_\omega$ we set the multiplicative domain of the ucp-map $a\in\mathfrak{M}\rightarrow \omega(a)I\in\mathfrak{M}$, then
\[
\mathfrak{D}_\omega= \{ a\in\mathfrak{M}: \omega(a^*a)=\omega(a^*) \omega(a) \ \text{and}  \ \omega(aa^*)=\omega(a) \omega(a^*) \}
\]
it is a von Neumann subalgebra of $\mathfrak{M}$.
\begin{proposition} The von Neumann algebra 
\[
\mathcal{A}=\bigcap\{\mathfrak{D}_\omega : \omega\in\mathfrak{P}(\mathfrak{M}) \}
\]
is an abelian algebra with $\Phi(\mathcal{A})\subset\mathcal{A}$. Furthermore for any stationary state $\varphi$ of our quantum process $(\mathfrak M, \Phi)$, there is a $\varphi$-invariant conditional expectation $\mathcal{E}_\varphi:\mathfrak{M}\rightarrow\mathcal{A}$ such that 
\[
\mathcal{E}_\varphi \circ \Phi =\Phi
\]
\end{proposition}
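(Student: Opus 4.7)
The strategy breaks into four steps, organized as: (i) show $\mathcal A$ is abelian, (ii) show $\Phi(\mathcal A)\subset \mathcal A$, (iii) produce the conditional expectation $\mathcal E_\varphi$, and (iv) establish the intertwining property (which I read as $\Phi\circ\mathcal E_\varphi=\mathcal E_\varphi\circ\Phi$, since the literal statement $\mathcal E_\varphi\circ\Phi=\Phi$ would force $\Phi(\mathfrak M)\subset\mathcal A$).

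For step (i), the key observation is that if $a\in\mathfrak D_\omega$ then $\omega$ restricted to the algebra generated by $a$ and $a^*$ is multiplicative: from $\omega(a^*a)=\omega(a^*)\omega(a)$ together with the Cauchy--Schwarz inequality for $\omega$, one gets $\omega(ax)=\omega(a)\omega(x)$ and $\omega(xa)=\omega(x)\omega(a)$ for every $x\in\mathfrak M$ (this is the standard multiplicative domain argument, completely analogous to the one recalled at the start of the paper for $\mathfrak D_\Phi$). Consequently, for $a,b\in\mathcal A$ and any pure state $\omega$, $\omega(ab)=\omega(a)\omega(b)=\omega(b)\omega(a)=\omega(ba)$, so $\omega(ab-ba)=0$ for every $\omega\in\mathfrak P(\mathfrak M)$, and the separation property of pure states forces $ab=ba$. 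Since $\mathcal A$ is a $\sigma$-closed intersection of unital $*$-subalgebras $\mathfrak D_\omega$, it is itself a von Neumann algebra.

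For step (ii), I use that a $*$-automorphism $\alpha$ of $\mathfrak M$ acts on states by $\omega\mapsto \omega\circ\alpha^{-1}$ and this action preserves purity; moreover for any such $\alpha$ one has $\alpha(\mathfrak D_\omega)=\mathfrak D_{\omega\circ\alpha^{-1}}$ directly from the definition. Therefore $\alpha(\mathcal A)=\mathcal A$ for every $*$-automorphism $\alpha$. Applying this to $\Phi$ gives $\Phi(\mathcal A)=\mathcal A$ (both inclusions at once; the reverse inclusion will be needed for step (iv)).

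For step (iii), applying the same observation to the modular automorphism group $\sigma_t^\varphi$ yields $\sigma_t^\varphi(\mathcal A)=\mathcal A$ for all $t\in\mathbb R$. By Takesaki's theorem (already invoked in the proof of Theorem~\ref{teo-dec}), there is a unique normal conditional expectation $\mathcal E_\varphi:\mathfrak M\to\mathcal A$ with $\varphi\circ\mathcal E_\varphi=\varphi$. Finally, for step (iv), I compare $\mathcal E_\varphi$ with $\Phi^{-1}\circ\mathcal E_\varphi\circ\Phi$: the latter is again a normal map $\mathfrak M\to\mathcal A$ (using $\Phi(\mathcal A)=\mathcal A$ from step (ii)), it is the identity on $\mathcal A$ (since $\Phi$ restricts to an automorphism of $\mathcal A$), it is a projection of norm one, and it preserves $\varphi$ because $\varphi$ is $\Phi$-invariant. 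Uniqueness of the $\varphi$-preserving conditional expectation onto $\mathcal A$ forces $\Phi^{-1}\circ\mathcal E_\varphi\circ\Phi=\mathcal E_\varphi$, i.e. $\mathcal E_\varphi\circ\Phi=\Phi\circ\mathcal E_\varphi$.

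The main obstacle I anticipate is the delicate step (i): extracting commutativity from the defining inequality $\omega(a^*a)=|\omega(a)|^2$ requires the full multiplicative-domain extension $\omega(ax)=\omega(a)\omega(x)$, which in turn uses Cauchy--Schwarz carefully. A secondary subtlety is whether $\varphi$ is faithful on $\mathcal A$ so that Takesaki's theorem applies and uniqueness holds; if $\varphi$ is merely normal stationary one should first pass to its support projection in $\mathcal A$, or invoke the faithful stationary state hypothesis of the ambient quantum dynamical system.
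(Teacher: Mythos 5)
Your proposal follows the paper's own proof essentially step for step: abelianness of $\mathcal{A}$ via the multiplicative-domain property of the ucp maps $x\mapsto\omega(x)1$ together with separation of points by pure states, invariance under $\Phi$ and under the modular group $\sigma_t^{\varphi}$ by composing pure states with these automorphisms (purity being preserved), and Takesaki's theorem to produce the $\varphi$-invariant conditional expectation onto $\mathcal{A}$. The only divergence is your step (iv), which is an addition rather than a different route: the paper states the (evidently garbled) identity $\mathcal{E}_\varphi \circ \Phi = \Phi$ but never proves any version of it, whereas your uniqueness argument correctly establishes the natural intertwining $\mathcal{E}_\varphi \circ \Phi = \Phi \circ \mathcal{E}_\varphi$, and your closing caveat that faithfulness of $\varphi$ is needed for Takesaki existence and uniqueness matches the assumption the paper invokes implicitly in its own proof.
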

\begin{proof}
If $a,b\in\mathcal{A}$, for any pure state $\omega$ of $\mathfrak{M}$ we have $\omega(ab)=\omega(a) \omega(b)=\omega(ba)$, then $\omega(ab-ba)=0$ and it follows that $ab-ba=0$.\\
The von Neumann algebra $\mathcal{A}$ is $\Phi$-invariant $\Phi(\mathcal{A})\subset\mathcal{A}$.\\
In fact $\omega \circ \Phi\in\mathfrak{P}(\mathfrak{M})$ for all $\omega\in\mathfrak{P}(\mathfrak{M})$ since $\Phi$ is a *-automorphism. Then for any $a\in\mathcal{A}$ we have
\[
\omega(\Phi(a^*) \Phi(a))=\omega(\Phi(a^*a))=\omega(\Phi(a^*))\omega(\Phi(a))
\]
it follows that $\Phi(a)\in\mathcal{A}$.\\
Let $\{ \sigma_\varphi^t \}_{t\in\mathbb{R}}$ be a modular group associate to GNS representation $( \mathcal{H}_\varphi,\pi_\varphi, \Omega_\varphi)$ of $\varphi$. Since the state $\varphi$ is normal and faithful we have $\pi_\varphi(\mathcal{A})''=\pi_\varphi(\mathcal{A})$ and $\sigma_\varphi^t(\pi_\varphi(\mathcal{A}))\subset\pi_\varphi(\mathcal{A})$ for all $t\in\mathbb{R}$.\\
In fact for any $a\in\mathcal{A}$ we have
\[
\omega(\sigma_\varphi^t(a^*)\sigma_\varphi^t(a))=\omega(\sigma_\varphi^t(a^*a))=\omega(\sigma_\varphi^t(a^*))\omega(\sigma_\varphi^t(a)) \ \qquad \omega\in\mathfrak{P}(\mathfrak{M})
\]
since $\sigma_\varphi^t$ is a *-automorphism so $\omega \circ \sigma_\varphi^t\in\mathfrak{P}(\mathfrak{M})$ for all real number $t$.\\
From Takesaki theorem \cite{takesaki72} we have that there is a conditional expectation $\mathcal{E}_\varphi:\mathfrak{M}\rightarrow\mathcal{A}$ such that
\[
\pi_\varphi(\mathcal{E}_\varphi(m))=\nabla^* \pi_\varphi(m) \nabla \ \qquad m\in\mathfrak{M}
\]
where $\nabla:\overline{\pi_\varphi(\mathcal{A})\Omega_\varphi}\longrightarrow \mathcal{H}_\varphi$ is the embedding map (see also \cite{accech}).
\end{proof}
We recall that any pure state is multiplicative on the center  $Z(\mathfrak M)=\mathfrak M \bigcap \mathfrak M'$ of $\mathfrak M$ (see \cite{moriyoshi}) so we have that $Z(\mathfrak M)\subset  \mathfrak{D}_\omega $ for all pure states $\omega$ and in abelian case  $\mathcal A=\mathfrak{M}$.
\\
 
Let $( \mathfrak{M},\Phi,\varphi)$ be a quantum dynamical system, with dynamics $\Phi$ that admits $\varphi$-adjoint $\Phi^{\sharp}$.\\
By the  decomposition theorem we have a *-automorphism $\Phi_\infty:\mathfrak{D}_\infty\rightarrow\mathfrak{D}_\infty$ with $\mathfrak{D}_\infty$ von Neumann algebra, then by the previous proposition, we can say that there exist an abelian algebra $\mathcal{A}\subset\mathfrak{D}_\infty$ with $\Phi(\mathcal{A})\subset\mathcal{A}$ getting the following commutative diagram  
\[
\begin{array}
[c]{ccccc}
& \mathfrak{M} & \overset{\Phi}{\longrightarrow} & \mathfrak{M}  & \\
i_\infty & \uparrow &  & \downarrow &\mathcal{E}_\infty \\
& \mathfrak{D}_\infty & \overset{\Phi_\infty}{\longrightarrow} & \mathfrak{D}_\infty &  \\
i_o& \uparrow &  & \downarrow & \mathcal{E}_\varphi\\
&  \mathcal{A}  & \overset{\Phi_o}{\longrightarrow} & \mathcal{A} &
\end{array}
\]
where $i_\infty$ and $i_o$ are the embeddig of $\mathfrak{D}_\infty$ and $\mathcal{A}$ respectively, while $\Phi_\infty$ and $\Phi_o$ are the restriction of $\Phi$ to $\mathfrak{D}_\infty$ and $\mathcal{A}$ respectively.\\
We observe that if the von Neumann algebra $\mathfrak{M}$ is abelian then $\mathcal{A}=\mathfrak{D}_\infty$.
\\ 
\subsection{Dilation properties}
We recall that a reversible quantum dynamical system $(\widehat{\mathfrak{M}},\widehat{\Phi},\widehat{\varphi})$, is said to be a dilation of the quantum dynamical system $(\mathfrak{M},\Phi,\varphi)$, if it satisfies the following conditions:
\\
There is *-monomorphism  $i:( \mathfrak{M}, \varphi)\rightarrow (\widehat{\mathfrak{M}}, \widehat{\varphi})$ and a completely positive map 
$\mathcal E:\widehat{\mathfrak{M}} \rightarrow \mathfrak{M}$ such that for each $a$ belong to $\mathfrak{M}$ and natural number $n$
\[
\mathcal E(\widehat{\Phi }^{n}(i(a)))=\Phi ^{n}((a))
\]
We observe that for each $a$ belong to $\mathfrak{M}$ and $X$ in $\widehat{\mathfrak{M}}$ we have:
\begin{equation*}
\mathcal E(i(a)X)=a \mathcal E (X).
\end{equation*}
Indeed for each $b\in\mathfrak{M}$ we obtain:
\[
\varphi(b \mathcal E(i(a) X)=\varphi(i(b) i(a) X)=\varphi(i(b a)X)=\varphi(b a \mathcal E(X))
\]
So, the ucp-map $\widehat{\mathcal{E}} = i\circ \mathcal E$ is a conditional expectation from  $\widehat{\mathfrak{M}}$ onto $i(\mathfrak{M})$ which leave invariant a faithful normal state.
The existence of such map which characterize the range of existence of a reversible dilation of a dynamical system, be derived from a theorem of Takesaki of \cite{takesaki72}.
\\ 
\indent
We have a proposition that establish a link between the algebra of effective observable and reversible dilation.
\begin{proposition}
If $(\widehat{\mathfrak{M}},\widehat{\Phi},\widehat{\varphi})$ is a dilation of quantum dynamical system $(\mathfrak{M},\Phi,\varphi)$ then 
\[
\widehat{\Phi}(i(a)=i(\Phi(a)) \quad \textsl{if, and only if } \quad a\in\mathfrak{D}_\Phi
\]
\end{proposition}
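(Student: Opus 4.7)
The proof plan is to handle the two implications separately. For the easier direction, assume $\widehat{\Phi}(i(a))=i(\Phi(a))$. Since $\widehat{\Phi}$ is a *-automorphism and using the bimodule property $\mathcal{E}(i(b)X)=b\,\mathcal{E}(X)$ recorded just before the proposition, together with its right-handed analogue $\mathcal{E}(X i(b))=\mathcal{E}(X)b$ (obtained from the left one by *-conjugation, since $\mathcal{E}$ is $*$-preserving as a completely positive map), one computes for every $x\in\mathfrak{M}$
\[
\Phi(ax)=\mathcal{E}(\widehat{\Phi}(i(ax)))=\mathcal{E}(\widehat{\Phi}(i(a))\widehat{\Phi}(i(x)))=\mathcal{E}(i(\Phi(a))\widehat{\Phi}(i(x)))=\Phi(a)\Phi(x),
\]
and symmetrically $\Phi(xa)=\Phi(x)\Phi(a)$. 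Hence $a\in\mathfrak{D}_{\Phi}$.

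For the converse, the idea is to put $Z:=\widehat{\Phi}(i(a))-i(\Phi(a))$ and show $\widehat{\varphi}(Z^{\ast}Z)=0$, which forces $Z=0$ by faithfulness of $\widehat{\varphi}$. Expanding $Z^{\ast}Z$ and exploiting (i) $\widehat{\varphi}\circ i=\varphi$, (ii) invariance $\widehat{\varphi}\circ\widehat{\Phi}=\widehat{\varphi}$, and (iii) the identity $\widehat{\varphi}=\varphi\circ\mathcal{E}$ (which follows from the fact that $\widehat{\mathcal{E}}=i\circ\mathcal{E}$ is a $\widehat{\varphi}$-invariant conditional expectation), the two diagonal terms evaluate to
\[
\widehat{\varphi}(\widehat{\Phi}(i(a^{\ast}))\widehat{\Phi}(i(a)))=\varphi(a^{\ast}a),\qquad \widehat{\varphi}(i(\Phi(a)^{\ast})i(\Phi(a)))=\varphi(\Phi(a)^{\ast}\Phi(a)),
\]
while each of the two cross terms reduces, via the bimodule property applied on the appropriate side and the defining relation $\mathcal{E}(\widehat{\Phi}(i(a^{\ast})))=\Phi(a)^{\ast}$, to $\varphi(\Phi(a)^{\ast}\Phi(a))$. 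Combining, $\widehat{\varphi}(Z^{\ast}Z)=\varphi(a^{\ast}a)-\varphi(\Phi(a)^{\ast}\Phi(a))$. The hypothesis $a\in\mathfrak{D}_{\Phi}$ gives $\Phi(a^{\ast}a)=\Phi(a)^{\ast}\Phi(a)$, and $\varphi$-stationarity yields $\varphi(\Phi(a)^{\ast}\Phi(a))=\varphi(\Phi(a^{\ast}a))=\varphi(a^{\ast}a)$, whence $\widehat{\varphi}(Z^{\ast}Z)=0$.

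The main obstacle is the bookkeeping for the cross terms: verifying that one may simultaneously push $\widehat{\varphi}$ through $\mathcal{E}$ and apply the bimodule identity on the correct side. Both facts are short consequences of material already in the excerpt (the right-handed bimodule property by $*$-symmetry, and $\widehat{\varphi}=\varphi\circ\mathcal{E}$ by $\widehat{\varphi}$-invariance of $\widehat{\mathcal{E}}$), so once these auxiliary identities are in place the argument is purely computational, and faithfulness of $\widehat{\varphi}$ together with the defining equality for the multiplicative domain closes the proof.
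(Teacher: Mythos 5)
Your proof is correct and follows essentially the same strategy as the paper: the easy direction is obtained by applying $\mathcal E$ (via the module property $\mathcal E(i(b)X)=b\,\mathcal E(X)$ and its adjoint version) to the multiplicativity of $\widehat\Phi$, and the converse by forming the difference $Z=\widehat\Phi(i(a))-i(\Phi(a))$ and annihilating $Z^*Z$ with a faithful positive functional. The only cosmetic differences are that you compute the scalar $\widehat\varphi(Z^*Z)=\varphi(a^*a)-\varphi(\Phi(a)^*\Phi(a))$ and invoke stationarity, where the paper shows the operator identity $\mathcal E(y^*y)=0$ directly and cites faithfulness of $\mathcal E$ (which itself follows from your identity $\widehat\varphi=\varphi\circ\mathcal E$), and that your forward direction establishes the full two-sided identities $\Phi(ax)=\Phi(a)\Phi(x)$ and $\Phi(xa)=\Phi(x)\Phi(a)$, which is slightly more thorough than the paper's verification of $\Phi(a^*)\Phi(a)=\Phi(a^*a)$ alone.
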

\begin{proof}
We have $i(\Phi(a)^*) \ i(\Phi(a))=\widehat{\Phi}(i(a)^*) \widehat{\Phi}(i(a))$ it follows that 
\[
\Phi(a^*) \ \Phi(a)= \mathcal E (i(\Phi(a)^*\Phi(a)))= \mathcal E(\widehat{\Phi}(i(a^*a))=\Phi(a^* a).
\]
For vice-versa, if $y=i(\Phi(a))- \widehat{\Phi}(i(a))$ then we have
\[
y^*y= i(\Phi(a^*a))-\widehat\Phi(i(a^*) i(\Phi(a))- i(\Phi(a^*))\widehat\Phi(i(a))+ \widehat\Phi(i(a^*a))
\]
since $a\in\mathfrak{D}_\Phi$. It follows that $\mathcal E(y^*y)=0$ with $\mathcal E$ faithful map, then $y=0$.
\end{proof}
Let $\mathfrak{M}=\mathfrak{D}_\infty \oplus \mathfrak{D}_\infty^{\perp_\varphi}$ be decomposition of theorem \ref{teo-dec} of our quantum dynamical system $(\mathfrak{M},\Phi,\varphi)$ and $\mathcal{E}_\infty : \mathfrak{M} \rightarrow \mathfrak{D}_\infty$ the conditional expectation defined in proposition \ref{teo-dec2}, we can say:
\begin{remark}
For each $a\in\mathfrak{M}$ and integer $k$ we have:
\[
\widehat{\Phi}^k(i(\mathcal{E}_\infty(a)))=i(\Phi_k (\mathcal{E}_\infty(a))
\]
\end{remark}
We observe that 
\[
X\in\ \textit{i}(\mathfrak{D}_\infty)^{\perp_{\widehat{\varphi}}} \quad \textsl{if , and only if }  \quad   \mathcal E(X)\in\mathfrak{D}_\infty^{\perp_\varphi}
\]
since $i(\mathfrak{D}_\infty)^{\perp_{\widehat{\varphi}}} = \{X\in\widehat{\mathfrak{M}}: \widehat{\varphi}(X^* i(d))=0 \quad \forall d\in\mathfrak{D}_\infty \}$ and 
$ \widehat{\varphi}(X^* i(d))=\varphi(\mathcal E(X^*)d)$ for all $d\in\mathfrak{D}_\infty$.
\\
We can write the following algebraic decomposition of linear spaces:
\[
\widehat{\mathfrak{M}}=\textit{i}(\mathfrak{D}_\infty) \oplus \textit{i}(\mathfrak{D}_\infty)^{\perp_{\widehat{\varphi}}}
\]
and the ucp-map $ \widehat{\mathcal{E}}_\infty =i \circ \mathcal{E}_\infty \circ \mathcal E$ is a conditional expectation from  $\widehat{\mathfrak{M}}$ onto $i(\mathfrak{D}_\infty)$.
\section{ Decomposition theorem and linear contractions}
We would study the relations between canonical decomposition of Nagy-Fojas of linear contraction $U_{\Phi,\varphi}$ \cite{nagy-foias} and decomposition (c) of   proposition \ref{teo-dec2}  of dynamical system $(\mathfrak{M},\Phi, \varphi)$.
\\
We going to recall the main statements  of these topics. 
\\

A contraction $T$ on the Hilbert space $\mathcal{H}$ is called \textit{completely non-unitary} if for no non zero reducing subspace $\mathcal{K}$ for $T$ is $T_{\mid\mathcal{K}}$ a unitary operator, where $T_{\mid\mathcal{K}}$ is the restriction of contraction $T$ on the Hilbert space $\mathcal{K}$.\\
We set with $D_T=\sqrt{I-T^*T}$ the defect operator of the contraction $T$ and it is well know that
\[
TD_T=D_{T^*} T 
\]
Moreover $ ||T\psi|||=||\psi|| $ if, and only if $ D_T\psi=0$.
\\
We consider the following Hilbert subspace of $\mathcal{H}$:
\begin{equation}
\mathcal{H}_0= \{ \psi\in\mathcal{H}: ||T^n\psi||=||\psi||=||T^{* n}\psi|| \ for \ all \ \ n\in\mathbb{N} \} 
\end{equation}
It is trivial show that $T^n\mathcal{H}_0=\mathcal{H}_0$ and $T^{* n}\mathcal{H}_0=\mathcal{H}_0$ for all natural number $n$.
\\

We have the following canonical decomposition (see \cite{nagy-foias}):
\begin{theorem}[\textit{Sz-Nagy and Fojas}]
\label{Sz-Nagy and Fojas} 
To every contraction $T$ on $\mathcal{H}$ there corresponds a uniquely determined decomposition of $\mathcal{H}$ into a orthogonal sum of two
subspace reducing $T$ we say $\mathcal{H=H}_{0}\mathcal{\oplus H}_{1}$, such that $T_0=T_{\mid\mathcal{H}_{0}}$ is unitary and $T_1=T_{\mid\mathcal{H}_{1}}$ is c.n.u., where
\[
\mathcal{H}_{0}=\bigcap\limits_{k\in\mathbb{Z}}\ker\left(  D_{T_{k}}\right)  \qquad and \qquad \mathcal{H}_{1}=\mathcal{H}_{0}^{\perp}
\]
with
\[
T_{k}=
\left\{
\begin{array}
[c]{cc}
T^k \ & k\geq0 \\
 T^{* -k  } & k<0
\end{array}
\right .
\]
\end{theorem}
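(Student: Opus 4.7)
The plan is to verify directly that the subspace $\mathcal{H}_0 := \bigcap_{k\in\mathbb{Z}} \ker(D_{T_k})$ works. First I would unpack the definition: since $D_{T_k}\psi = 0$ iff $\|T_k\psi\| = \|\psi\|$, membership $\psi \in \mathcal{H}_0$ is equivalent to $\|T^n\psi\| = \|\psi\|$ and $\|T^{*n}\psi\| = \|\psi\|$ for every $n \in \mathbb{N}$. Closedness of $\mathcal{H}_0$ is automatic as an intersection of kernels of bounded operators.

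The core step is to show that $\mathcal{H}_0$ reduces $T$, i.e., that it is stable under both $T$ and $T^*$. For $\psi \in \mathcal{H}_0$, the identity $\|T\psi\|=\|\psi\|$ forces $T^*T\psi = \psi$ (because $D_T\psi=0$); iterating yields $T^{*n}T\psi = T^{*(n-1)}\psi$, so $\|T^{*n}(T\psi)\| = \|T^{*(n-1)}\psi\| = \|\psi\| = \|T\psi\|$, while $\|T^n(T\psi)\| = \|T^{n+1}\psi\| = \|\psi\| = \|T\psi\|$. Hence $T\psi \in \mathcal{H}_0$, and the dual argument (using $TT^*\psi = \psi$) gives $T^*\psi \in \mathcal{H}_0$. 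Consequently $T_0 := T|_{\mathcal{H}_0}$ is isometric with isometric adjoint, therefore unitary on $\mathcal{H}_0$.

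Next I would set $\mathcal{H}_1 := \mathcal{H}_0^\perp$, which also reduces $T$, and argue that $T_1 := T|_{\mathcal{H}_1}$ is completely non-unitary. Indeed, if $\mathcal{K} \subset \mathcal{H}_1$ is any reducing subspace on which $T$ acts as a unitary, then every $\psi \in \mathcal{K}$ satisfies $\|T^n\psi\| = \|T^{*n}\psi\| = \|\psi\|$ for all $n$, whence $\psi \in \mathcal{H}_0 \cap \mathcal{H}_1 = \{0\}$. Thus $\mathcal{K} = \{0\}$, giving the c.n.u.\ property.

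For uniqueness, if $\mathcal{H} = \mathcal{H}_0' \oplus \mathcal{H}_1'$ is another such decomposition with $T|_{\mathcal{H}_0'}$ unitary and $T|_{\mathcal{H}_1'}$ c.n.u., then by the same norm-preservation argument $\mathcal{H}_0' \subseteq \mathcal{H}_0$; but the orthogonal complement $\mathcal{H}_0 \cap \mathcal{H}_1'$ would be a unitary reducing part of $T|_{\mathcal{H}_1'}$, forcing $\mathcal{H}_0 \cap \mathcal{H}_1' = \{0\}$ and hence $\mathcal{H}_0 \subseteq \mathcal{H}_0'$. The only slightly delicate point I anticipate is verifying the reducing property rigorously (the passage from $\|T\psi\|=\|\psi\|$ to $T^*T\psi=\psi$ via $D_T\psi=0$); everything else is a bookkeeping exercise with norms.
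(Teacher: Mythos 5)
Your proof is correct, and it is essentially the canonical argument for this classical theorem: the paper itself states the result without proof, citing \cite{nagy-foias}, and your route (characterizing $\mathcal{H}_0$ by the norm identities $\|T^n\psi\|=\|\psi\|=\|T^{*n}\psi\|$, using $\|T\psi\|=\|\psi\|\Leftrightarrow D_T\psi=0\Leftrightarrow T^*T\psi=\psi$ to show $\mathcal{H}_0$ reduces $T$, then deducing unitarity on $\mathcal{H}_0$, the c.n.u.\ property on $\mathcal{H}_0^\perp$, and uniqueness) is exactly the proof of Theorem I.3.2 in Sz.-Nagy--Foia\c{s}. The only compressed step is the final inclusion $\mathcal{H}_0\subseteq\mathcal{H}_0'$ in the uniqueness argument: from $\mathcal{H}_0'\subseteq\mathcal{H}_0$ and $\mathcal{H}_0\cap\mathcal{H}_1'=\{0\}$, write $\psi=\psi_0'+\psi_1'$ for $\psi\in\mathcal{H}_0$ and observe $\psi_1'=\psi-\psi_0'\in\mathcal{H}_0\cap\mathcal{H}_1'=\{0\}$ --- a one-line fill, not a gap.
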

It is well know  \cite{nagy-foias} that  the linear operator $T_{-}= so-\underset{n\rightarrow+\infty}{\lim}T^{* n}T^n $ and $T_{+}=so-\underset{n\rightarrow+\infty}{\lim}T^n T^{* n} $, there are in sense of strong operator ($so$) convergence.
\\
After this brief detour on linear contractions we return to quantum dynamical systems $( \mathfrak{M},\Phi,\varphi)$.
\\
We set  $V_{-}=so-\underset{n\rightarrow+\infty}{\lim}U_{\Phi,\varphi}^{* n}U_{\Phi,\varphi}^n $ and $V_{+}=so-\underset{n\rightarrow+\infty}{\lim}U_{\Phi,\varphi}^n U_{\Phi,\varphi}^{* n} $,  where $U_{\Phi,\varphi}$ is the contraction defined in (\ref{contrazione 1}).
\\
It follows that for each $a,b\in\mathfrak{M}$ we obtain:
\begin{equation}
\label{limitecontra}
\underset{n\rightarrow \pm\infty}{\lim}\varphi(S_{n}(a,b))=\langle \pi_{\varphi}(a)\Omega_\varphi ,(I-V_\pm )\pi_{\varphi}(b) \Omega_\varphi\rangle
\end{equation}
where $S_{n}(a,b)$ is given by (\ref{sesqui}).
\\
We recall that by proposition \ref{teo-dec2} that for every integers $k$ we obtain $\mathcal H_\varphi=\mathcal H_\infty \oplus \mathcal K_\infty$ with $U_k\mathcal H_\infty = \mathcal H_\infty$ and $U_k \mathcal K_\infty \subset \mathcal K_\infty$, where
\[
U_{k}=
\left\{
\begin{array}
[c]{cc}
U_{\Phi,\varphi}^k & k\geq0 \\

 U_{\Phi,\varphi}^{* -k } & k<0
\end{array}
\right .
\]
A simple consequences of proposition \ref{prop0} is the following remark:
\\
For any integers $k$ we obtain
$$ a\in \mathfrak D_{\Phi_k} \quad  \textsl{if, and only if} \quad \pi_\varphi(a)\Omega_\varphi \in \ker ( D_{U_k}) \ \textsl{and} \ \pi_\varphi(a^*)\Omega_\varphi \in \ker (D_{U_k})$$
\\
Therefore $\mathcal H_\infty \subset  \mathcal H_0 $ because
$$ \pi_\varphi(\mathcal D_\infty)\Omega_\varphi \subset \bigcap_{k\in\mathbb Z} \pi_\varphi(\mathfrak D_{\Phi_k}) \Omega_\varphi \subset \bigcap\limits_{k\in\mathbb{Z}}\ker (  D_{U_k})$$
We observe that for each $a,b\in\mathfrak{M}$ and natural number $k$ we have (see \cite{frigerio} theorem 3.1):
\begin{equation}
\label{limite1}
\underset{n\rightarrow+\infty}{\lim}\varphi(S_{k}(\Phi^n(a),b))=0
\end{equation}
Indeed for each natural numbers $k$ and $n$, we obtain
\[
\varphi(S_{k}(\Phi^n(a),\Phi^n(b))=\varphi(S_{k+n}(a,b))-\varphi(S_{n}(a,b))
\]
and by the relation  (\ref{limitecontra}) result
$ \underset{n\rightarrow+\infty}{\lim}(\varphi(S_{k+n}(a,b))-\varphi(S_{n}(a,b)))=0 $.
\\
Furthermore, for each natural number $k$ and $a,b\in\mathfrak{M}$ we have 
\[
| \varphi(S_{k}(\Phi^n(a),b))|^2\leq\varphi(S_{k}(\Phi^n(a),\Phi^n(a)) \varphi(S_{k}(b,b)) 
\]
it follows that $\underset{n\rightarrow+\infty}{\lim}\varphi(S_{k}(\Phi^n(a),b)=0$.
\\
We have a well-known statement (see \cite{Hellmich}, \cite{lu-olk} and \cite{stormer07} ):
\begin{proposition}
For all $a\in\mathfrak{M}$ any $\sigma$-limit point of the set $\{ \Phi^k(a) \}_{k\in\mathbb{N}}$ belongs to the von Neumann algebra $\mathfrak{D}_{\infty}$. Moreover, for each $d_\bot$ in $\mathfrak{D}_{\infty}^{\perp_{\varphi}}$ we have:
\[
\underset{k\rightarrow+\infty}{\lim}\Phi^k(d_\bot)=0 \qquad and  \qquad \underset{k\rightarrow+\infty}{\lim}\Phi^{\sharp k}(d_\bot)=0
\]
where the limits are in $\sigma$-topology.
\end{proposition}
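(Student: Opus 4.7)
The plan is to handle the two parts in sequence. First I prove that every $\sigma$-limit point $a_\infty$ of $\{\Phi^k(a)\}_{k\in\mathbb N}$ lies in $\mathfrak{D}_\infty$; then I combine this with the $\sigma$-closedness and $\Phi$-invariance of $\mathfrak{D}_\infty^{\perp_\varphi}$, and the $\sigma$-compactness of bounded sets in $\mathfrak{M}$, to deduce that $\Phi^k(d_\bot)\to 0$ in $\sigma$-topology for $d_\bot\in\mathfrak{D}_\infty^{\perp_\varphi}$, arguing symmetrically for $\Phi^\sharp$.

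For the first part, fix a subnet $\Phi^{k_\alpha}(a)\to a_\infty$ in $\sigma$-topology with $k_\alpha\to\infty$. Using the identification $\mathfrak{D}_\infty=\mathcal C_\Phi=\bigcap_{n}\Phi^n(\mathfrak{D}_\infty^+)$ established earlier, I split the claim into two sub-steps. For the first sub-step, to show $a_\infty\in\mathfrak{D}_\infty^+$, I pass the limit (\ref{limite1}), $\lim_n\varphi(S_k(\Phi^n(a),b))=0$, through the subnet. For fixed $k\in\mathbb N$ and $b\in\mathfrak{M}$, the expression $\varphi(S_k(c,b))=\varphi(c^*b)-\varphi(\Phi^k(c^*)\Phi^k(b))$ (using stationarity) is $\sigma$-continuous in $c$, because involution, right multiplication by a fixed element, the normal map $\Phi^k$ and the normal state $\varphi$ are all $\sigma$-continuous. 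Hence $\varphi(S_k(a_\infty,b))=0$ for every $b$; specializing to $b=a_\infty$ gives $\varphi(S_k(a_\infty,a_\infty))=0$, and applying the same reasoning along $\Phi^{k_\alpha}(a^*)\to a_\infty^*$ yields $\varphi(S_k(a_\infty^*,a_\infty^*))=0$. By property (d) of the $S_k$ together with faithfulness of $\varphi$, this forces $a_\infty\in\mathfrak{D}_{\Phi^n}$ for all $n$, hence $a_\infty\in\mathfrak{D}_\infty^+$. For the second sub-step, fix $n\ge 1$ and apply Banach--Alaoglu to the bounded net $\{\Phi^{k_\alpha-n}(a)\}$ to extract a further $\sigma$-convergent subnet $\Phi^{k_\beta-n}(a)\to b$; by the first sub-step $b\in\mathfrak{D}_\infty^+$, and by $\sigma$-normality of $\Phi^n$, $\Phi^{k_\beta}(a)=\Phi^n(\Phi^{k_\beta-n}(a))\to\Phi^n(b)$, so $a_\infty=\Phi^n(b)\in\Phi^n(\mathfrak{D}_\infty^+)$. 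Intersecting over $n$ places $a_\infty$ in $\mathcal C_\Phi=\mathfrak{D}_\infty$.

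For the second part, let $d_\bot\in\mathfrak{D}_\infty^{\perp_\varphi}$. The orbit $\{\Phi^k(d_\bot)\}$ is contained in the $\|d_\bot\|$-ball, hence relatively $\sigma$-compact. Proposition \ref{teo-dec2}(d), combined with $\sigma$-closedness of $\mathfrak{D}_\infty^{\perp_\varphi}$, places every $\sigma$-cluster point of this orbit in $\mathfrak{D}_\infty^{\perp_\varphi}$, while the first part places it in $\mathfrak{D}_\infty$; since $\mathfrak{D}_\infty\cap\mathfrak{D}_\infty^{\perp_\varphi}=\{0\}$ by faithfulness of $\varphi$, the unique cluster point is $0$, and $\sigma$-compactness of the ball forces $\Phi^k(d_\bot)\to 0$ in $\sigma$-topology. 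The statement for $\Phi^{\sharp k}$ follows by running the same argument on $(\mathfrak{M},\Phi^\sharp,\varphi)$, since the definition of $\mathfrak{D}_\infty$ is symmetric in $\Phi$ and $\Phi^\sharp$ and Proposition \ref{teo-dec2}(d) holds for all $k\in\mathbb Z$. The main obstacle I anticipate is the lifting sub-step $a_\infty\in\mathfrak{D}_\infty^+\Rightarrow a_\infty\in\Phi^n(\mathfrak{D}_\infty^+)$: because $\mathfrak{D}_\infty$ coincides with $\mathcal C_\Phi$ rather than merely $\mathfrak{D}_\infty^+$, the conclusion cannot be read off from local multiplicative-domain conditions at $a_\infty$ alone but requires producing $\Phi^n$-preimages in $\mathfrak{D}_\infty^+$ through nested subnet extractions.
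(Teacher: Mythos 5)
Your argument is correct, and its second half (boundedness of the orbit, $\sigma$-compactness of the ball, every cluster point lying in $\mathfrak D_\infty\cap\mathfrak D_\infty^{\perp_\varphi}=\{0\}$ by the first part together with $\Phi_k$-invariance and $\sigma$-closedness of $\mathfrak D_\infty^{\perp_\varphi}$, hence convergence to $0$, with the $\Phi^\sharp$-case by symmetry since $\mathfrak D_\infty(\Phi^\sharp)=\mathfrak D_\infty(\Phi)$) is the same as the paper's. The first half, however, takes a genuinely different route. The paper runs the $S_k$-vanishing argument for \emph{all} integers $k$ at once: it shows $\varphi(S_k(y,b))=0$ via $\sigma$-continuity of $c\mapsto\varphi(S_k(c,b))$ and the relation (\ref{limite1}), then $S_k(y,y)=S_k(y^*,y^*)=0$, and concludes $y\in\mathfrak D_\infty$ directly. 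Strictly speaking this tacitly uses (\ref{limite1}) also for negative $k$ (i.e.\ for $\Phi^{\sharp m}$), whereas the paper establishes it only for natural $k$; the identity $\varphi(S_k(\Phi^n(a),\Phi^n(b)))=\varphi(S_{k+n}(a,b))-\varphi(S_n(a,b))$ underlying its proof breaks down for $k<0$ because $\Phi^{\sharp m}\circ\Phi^n\neq\Phi^{n-m}$, so the negative-$k$ case needs a separate (true, but unwritten) argument, e.g.\ via the monotone limit of $\|U_{\Phi,\varphi}^n\pi_\varphi(a)\Omega_\varphi\|$. You instead use (\ref{limite1}) only for $k\in\mathbb N$, which lands $a_\infty$ in $\mathfrak D_\infty^+$, and then upgrade to $\mathfrak D_\infty$ through the identification $\mathfrak D_\infty=\mathcal C_\Phi=\bigcap_n\Phi^n(\mathfrak D_\infty^+)$, extracting for each $n$ a further $\sigma$-convergent subnet of $\{\Phi^{k_\beta-n}(a)\}$ and using normality of $\Phi^n$ plus uniqueness of $\sigma$-limits to write $a_\infty=\Phi^n(b)$ with $b\in\mathfrak D_\infty^+$. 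What your detour buys is that every ingredient you invoke is literally proved in the paper, so the $\Phi^\sharp$-half of (\ref{limite1}) is never needed; what the paper's route buys, once (\ref{limite1}) is extended to all integers, is brevity and independence from the remark $\mathfrak D_\infty=\mathcal C_\Phi$. One small shared point worth making explicit in your write-up: a $\sigma$-cluster point of the sequence always admits an approximating subnet with $k_\alpha\to\infty$, which is what licenses passing (\ref{limite1}) along the subnet.
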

\begin{proof}
If $y$ is a $\sigma$-limit point of $\{ \Phi^n(a) \}_{n\in\mathbb{N}}$ then there exists a net $\{ \Phi^{n_{j}}(a) \}_{j\in\mathbb{N}}$ such that
 $y=\underset{j\rightarrow+\infty}{\lim}\Phi^{n_{j}}(a)$ in $\sigma$-topology. Furthermore for each $b\in\mathfrak{M}$ we obtain 
\[
S_{k}(y,b)=\sigma-\underset{j\rightarrow+\infty}{\lim} [ \ \Phi^k(\Phi^{n_j}(a)b)-\Phi^k(\Phi^{n_j}(a))\Phi^k(b) \ ] =\sigma-\underset{j\rightarrow+\infty}{\lim}S_{k}(\Phi^{n_{j}}(a),b)
\]
from (\ref{limite1}) we obtain $\underset{j\rightarrow+\infty}{\lim}\varphi(S_{k}(\Phi^{n_{j}}(a),b))=0$ hence $\varphi(S_{k}(y,b))=0$.
\\
It follows that $\varphi(S_{k}(y,y))=0$ and $S_{k}(y,y)=0$.
\\
We observe that the adjoint is $\sigma$-continuous, then we obtain $y^*=\underset{j\rightarrow+\infty}{\lim}\Phi^{n_{j}}(a^*)$,  and repeating the previous steps we obtain $S_{k}(y^*,y^*)=0$, hence $y\in\mathfrak{D}_{\infty}$.
\\
For last statement we observe that for each natural number $k$ result $||\Phi^k(d_\bot)||\leq ||d_\bot||$ and since the unit ball of the von Neumann algebra $\mathfrak{M}$ is $\sigma$- compact we have that there is a subnet such that $\Phi^{k_\alpha}(d_\bot)\rightarrow y\in\mathfrak{D}_{\infty}^{\perp_{\varphi}}$ in $\sigma$-topology. From previous lemma we have that $y\in\mathfrak{D}_{\infty}\cap\mathfrak{D}_{\infty}^{\perp_{\varphi}}$ it follows that $y=0$. 
then it can only be $\underset{k\rightarrow+\infty}{\lim}\Phi^k(d_\bot)=0$ in $\sigma$-topology.
\end{proof}
We observed that the Hilbert space $\mathcal H_\infty$, the linear closure of  $\pi_\varphi (\mathfrak D_\infty)\Omega_\varphi$ is contained in $\mathcal H_0$. The next step is to understand when we have the equality of these two Hilbert spaces.
\\

Let $( \mathfrak{M},\Phi,\varphi)$ be the previous quantum dynamical system, we define, for each integer $k$, the unital Schwartz   map $\tau_k:\mathfrak{M} \rightarrow \mathfrak{M}$ as 
\begin{equation}
\label{taumap}
\tau_k=\Phi_{-k}\circ \Phi_k \qquad \qquad k\in \mathbb Z
\end{equation}
We have for any integer $k$ that 
\begin{itemize}
\item [1 - ]  $\varphi \circ \tau_k = \varphi$
\item [2 - ]  $\tau_k=\tau_k^\sharp$, where $\tau_k^\sharp$ is the $\varphi$-adjoint of $\tau_k$.
\end{itemize}
We obtain, for any integer $k$ the  dynamical system $\left\{\mathfrak M, \tau_k, \varphi \right\}$ with
\[
\mathfrak D_\infty (\tau_k)= \bigcap_{j\geq 0} \mathfrak D(\tau_k^j)
\]
where with $ \mathfrak D(\tau_k^j) $ we have denote the multiplicative domains of map $\tau_k ^j$.
\\
From decomposition theorem \ref{teo-dec}, for any integer $k$ we have: 
\[
\mathfrak M= \mathfrak D_\infty (\tau_k) \oplus \mathfrak D_\infty (\tau_k)^ {\perp_\varphi}
\]
and by the  proposition \ref{hilbertdeco2} 
\[
\mathcal H_\varphi = \mathcal H_{(k)} \oplus \mathcal K_{(k)}
\]
where $\mathcal H_{(k)} $ and $\mathcal K_{(k)}$ are the closure of the linear space $\pi_\varphi (\mathfrak D_\infty (\tau_k) )\Omega_\varphi$ and of 
$\pi_{\varphi}(\mathfrak D_\infty (\tau_k) )^{\perp_{\varphi}})\Omega_{\varphi}$ respectively.
\\
We have the following proposition:
\begin{proposition}
If $ \overline{\pi_\varphi (\mathfrak D_{\Phi_k})\Omega_\varphi}$ denotes the closure of  linear space $\pi_\varphi (\mathfrak D_{\Phi_k})\Omega_\varphi$ then we have 
\[
\mathcal H_0 = \bigcap_{k\in\mathbb Z} \overline{\pi_\varphi (\mathfrak D_{\Phi_k})\Omega_\varphi}
\]
where the $\mathcal H_0$  is the Hilbert space of Nagy decomposition of theorem \ref{Sz-Nagy and Fojas}.
\\
Furthermore for any $a\in\mathfrak M$ and $\xi_0 \in \mathcal H_0$ and integr $k$ we have
\begin{equation}
\label{equationdelta2}
U_ {\Phi,\varphi}^k \pi_\varphi(a)  \xi_0 =\pi_\varphi(\Phi^k (a)) U_{\Phi, \varphi}^k  \xi_0
\end{equation}
\end{proposition}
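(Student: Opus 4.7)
The plan is to prove the set equality first and then deduce the intertwining relation (\ref{equationdelta2}) from it. The inclusion
\[
\bigcap_{k\in\mathbb{Z}}\overline{\pi_\varphi(\mathfrak{D}_{\Phi_k})\Omega_\varphi}\subseteq\mathcal{H}_0
\]
is immediate from the remark stated just before the proposition together with Theorem \ref{Sz-Nagy and Fojas}: since $\pi_\varphi(\mathfrak{D}_{\Phi_k})\Omega_\varphi\subseteq\ker D_{U_k}$ and $\ker D_{U_k}$ is norm-closed, the same inclusion holds for the closure, and intersecting over $k$ lands inside $\bigcap_k\ker D_{U_k}=\mathcal{H}_0$.

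For the reverse inclusion, I would fix $k\in\mathbb{Z}$ and exploit the auxiliary system $(\mathfrak{M},\tau_k,\varphi)$ introduced in (\ref{taumap}). A direct computation on the dense set $\pi_\varphi(\mathfrak{M})\Omega_\varphi$ (using $U_{\Phi^\sharp,\varphi}=U_{\Phi,\varphi}^{*}$) gives $U_{\tau_k,\varphi}=U_k^{*}U_k$, so $U_{\tau_k,\varphi}$ is a positive self-adjoint contraction and $\ker D_{U_k}=\ker(I-U_{\tau_k,\varphi})$. Next I would show that $U_{\tau_k,\varphi}$ preserves the decomposition $\mathcal{H}_\varphi=\mathcal{H}_{(k)}\oplus\mathcal{K}_{(k)}$ from the paragraph just before the proposition; this follows from Proposition \ref{teo-dec2}(e) applied to the system $(\mathfrak{M},\tau_k,\varphi)$, since the conditional expectation onto $\mathfrak{D}_\infty(\tau_k)$ commutes with $\tau_k$. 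The restriction of $U_{\tau_k,\varphi}$ to $\mathcal{K}_{(k)}$ admits no non-zero fixed vector, which I would establish via a mean-ergodic argument: the Cesaro means $\frac{1}{N}\sum_{j=0}^{N-1}\tau_k^{j}(a)$ converge in $\sigma$-topology to an element of $\mathfrak{D}_\infty(\tau_k)$, and for $a\in\mathfrak{D}_\infty(\tau_k)^{\perp_\varphi}$ this limit lies in $\mathfrak{D}_\infty(\tau_k)\cap\mathfrak{D}_\infty(\tau_k)^{\perp_\varphi}=\{0\}$ because $\mathfrak{D}_\infty(\tau_k)^{\perp_\varphi}$ is $\sigma$-closed and $\tau_k$-invariant. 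Consequently $\ker(I-U_{\tau_k,\varphi})=\mathcal{H}_{(k)}=\overline{\pi_\varphi(\mathfrak{D}_\infty(\tau_k))\Omega_\varphi}$.

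To close this first part I would check the algebraic inclusion $\mathfrak{D}_\infty(\tau_k)\subseteq\mathfrak{D}_{\Phi_k}$. Since $U_{\tau_k,\varphi}$ is a positive contraction, $\mathfrak{D}_\infty(\tau_k)$ coincides with the fixed-point set $\{a:\tau_k(a)=a\}$ (Proposition \ref{prop0}(d) plus the fact that isometric vectors of a positive contraction are fixed vectors). For any such $a$, using $\varphi\circ\Phi_{-k}=\varphi$ and the $\varphi$-adjoint property,
\[
\varphi(\Phi_k(a)^{*}\Phi_k(a))=\varphi(a^{*}\Phi_{-k}(\Phi_k(a)))=\varphi(a^{*}\tau_k(a))=\varphi(a^{*}a)=\varphi(\Phi_k(a^{*}a)),
\]
so the Schwartz inequality (\ref{schrwaz-ine}) combined with faithfulness of $\varphi$ forces $\Phi_k(a^{*}a)=\Phi_k(a)^{*}\Phi_k(a)$; the same argument applied to $a^{*}$ completes $a\in\mathfrak{D}_{\Phi_k}$. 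This yields $\mathcal{H}_0\subseteq\overline{\pi_\varphi(\mathfrak{D}_{\Phi_k})\Omega_\varphi}$ for every $k$, and the intersection gives the required equality.

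For the intertwining formula (\ref{equationdelta2}), given $\xi_0\in\mathcal{H}_0$ and an integer $k$, the equality just proved lets me approximate $\xi_0=\lim_n\pi_\varphi(d_n)\Omega_\varphi$ with $d_n\in\mathfrak{D}_{\Phi_k}$. Multiplicativity of $\Phi^k$ on its multiplicative domain gives $\Phi^k(ad_n)=\Phi^k(a)\Phi^k(d_n)$, hence
\[
U_{\Phi,\varphi}^k\pi_\varphi(a)\pi_\varphi(d_n)\Omega_\varphi=\pi_\varphi(\Phi^k(a))\,U_{\Phi,\varphi}^k\pi_\varphi(d_n)\Omega_\varphi,
\]
and passing to the limit produces (\ref{equationdelta2}). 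The main obstacle in the whole argument is the mean-ergodic step identifying the fixed-vector space of $U_{\tau_k,\varphi}$ with $\mathcal{H}_{(k)}$; once that is in place, everything else reduces to Proposition \ref{prop0}, the characterization $\mathfrak{D}_\infty(\tau_k)=\{a:\tau_k(a)=a\}$, and Theorem \ref{teo-dec}.
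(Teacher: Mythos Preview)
Your argument is correct and follows essentially the same route as the paper: both proofs pass through the auxiliary self-adjoint system $(\mathfrak{M},\tau_k,\varphi)$, establish $\mathfrak{D}_{\tau_k}\subseteq\mathfrak{D}_{\Phi_k}$, and use a convergence property of $\tau_k^n$ on $\mathfrak{D}_\infty(\tau_k)^{\perp_\varphi}$ to force $\mathcal{H}_0\subseteq\mathcal{H}_{(k)}$, while the opposite inclusion and (\ref{equationdelta2}) come from approximation exactly as you describe. The only substantive difference is that the paper reaches $\langle\pi_\varphi(r_\bot)\Omega_\varphi,\xi_0\rangle=0$ directly from $\tau_k^n(r_\bot)\to 0$ in $\sigma$-topology (the earlier proposition on $\sigma$-limit points applied to $(\mathfrak{M},\tau_k,\varphi)$), whereas you go through the Ces\`aro mean; your identification $\mathfrak{D}_\infty(\tau_k)=\mathcal{F}(\tau_k)$ via the positive-contraction trick is in fact the content of the later proposition $\mathcal{F}(\tau_k)=\mathfrak{D}_{\Phi_k}$, so you are anticipating a result the paper proves separately.
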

\begin{proof}
We have that $\mathfrak D(\tau_k)\subset \mathfrak D_{\Phi_k}$ for all integers $k$. 
\\
In fact if $a\in\mathfrak D(\tau_k)$ then 
\begin{eqnarray*}
\varphi(\Phi_k(a^* a) ) & = & \varphi (a^* a ) = \varphi(\tau_k(a^* a))= \varphi (\tau_k(a^*) \tau_k (a)) =\varphi(\Phi_{-k}(\Phi_k(a)^*) \Phi_{-k}(\Phi_k(a))) \leq
\\
&\leq & \varphi(\Phi_{-k}(\Phi_k(a)^* \Phi_k(a)))=\varphi(\Phi_k(a^*)\Phi_k(a))\leq \varphi(\Phi_k(a^*a)) 
\end{eqnarray*}
It follows that $\varphi(S_k(a,a))=0$ for all integers $k$ and in the same way proves that $\varphi(S_k(a^*,a^*))=0$ for all integers $k$. 
\\
We have proved that 
\[
\mathfrak D_\infty(\tau_k) =\bigcap_{j\in\ \mathbb N} \mathfrak D_{\tau_k^j} \subset \mathfrak D_{\tau_k} \subset \mathfrak D_{\Phi_k}
\]
If $\xi_0\in\mathcal H_0$  then for any $k$ integer and natural number $n$ we have $ (U_{\Phi, \varphi,}^{* k} U_{\Phi,\varphi}^{k})^n \xi_0=\xi_0$ and for any $r_\bot \in \mathfrak{D}_\infty(\tau_k)^{\perp_\varphi}$ we can write that 
\begin{equation*}
\left\langle\pi_\varphi(r_\bot)\Omega_\varphi, \xi_0 \right\rangle = \left\langle (U_{\Phi, \varphi,}^{* k} U_{\Phi,\varphi}^{k} )^n \ \pi_\varphi(r_\bot)\Omega_\varphi, \xi_0 \right\rangle = 
 \left\langle \pi_\varphi(\tau_k^n(r_\bot)\Omega_\varphi, \xi_0 \right\rangle   
\end{equation*}
and
\[
\lim_{n \rightarrow + \infty}  \left\langle \pi_\varphi(\tau_k^n(r_\bot)\Omega_\varphi, \xi_0 \right\rangle  =0 \qquad k\in \mathbb Z
\]
since $\tau_k^n(r_\bot) \longrightarrow  0 $ as $ n \rightarrow \infty $ in $\sigma$-topology. 
\\
It follows that $\mathcal H_0 \subset [ \pi_\varphi (\mathfrak{D}_\infty(\tau_k)^{\perp_\varphi})\Omega_\varphi ] ^ \perp = [ \mathcal K_k  ] ^\perp$. 
\\
Therefore for any integers $k$ we obtain:
\[
\mathcal H_0 \subset \mathcal H_{(k)}  \subset \overline{\pi_\varphi (\mathfrak D_{\Phi_k})\Omega_\varphi} \qquad  \Longrightarrow  \qquad \mathcal H_0 \subset\bigcap_{k\in\mathbb Z} \overline{\pi_\varphi (\mathfrak D_{\Phi_k})\Omega_\varphi}
\]
Let $\xi_0\in \bigcap_{k\in\mathbb Z} \overline{\pi_\varphi (\mathfrak D_{\Phi_k})\Omega_\varphi} $,  for any integers $k$ we have a net 
$d_{\alpha,k}\in \mathfrak D_{\Phi_k}$ such that $\pi_\varphi (d_{\alpha,k})\Omega_\varphi \rightarrow \xi_0$ as $\alpha\rightarrow \infty$ and for $k\geq 0$ we obtain
$$U^{* k}_{\Phi,\varphi}U^{ k}_{\Phi,\varphi} \xi_0 =
U^{* k}_{\Phi,\varphi}U^{ k}_{\Phi,\varphi} \lim_\alpha \pi_\varphi (d_{\alpha,k})\Omega_\varphi= 
 \lim_\alpha U^{* k}_{\Phi,\varphi}U^{ k}_{\Phi,\varphi}  \pi_\varphi (d_{\alpha,k})\Omega_\varphi=
 \lim_\alpha \pi_\varphi (d_{\alpha,k})\Omega_\varphi=\xi_0 $$
in the same way for $k\geq 0$ we have $U^{ k}_{\Phi,\varphi} U^{* k}_{\Phi,\varphi} \xi_0=\xi_0$. 
\\
It follows that 
$$ \bigcap_{k\in\mathbb Z} \overline{\pi_\varphi (\mathfrak D_{\Phi_k})\Omega_\varphi}\subset \mathcal H_0 $$
The relation (\ref{equationdelta2}) is a straightforward.
\end{proof}
We observe that  for any $a\in\mathfrak{M}$ and $d_\bot\in\mathfrak{D}_\infty^{\perp_\varphi}$ we have
\begin{equation}
\label{limitedebole}
\underset{n\rightarrow\infty}{\lim}\varphi(a^*\Phi_n(d_\bot)a)=0
\end{equation}
since for any $d_\bot\in\mathfrak{D}_{\infty}^{\perp_{\varphi}}$ we obtain $\Phi_n(d_\bot)\rightarrow 0$ as $n\rightarrow \infty$ in $\sigma$-topology.
\\
From polarization identity we can say that
\[
\underset{n\rightarrow\infty}{\lim}\varphi(a\Phi_n(d_\bot)b)=0, \qquad \ a,b\in\mathfrak{M}, \ d_\bot\in\mathfrak{D}_\infty^{\perp_\varphi}
\]
and since $U_\varphi$ is a contraction it follows that for any $\xi\in\mathcal{H}_\varphi$ and $\psi\in\mathcal{K}_\infty$ we have
\begin{equation}
\label{limitedeboleU}
\underset{n\rightarrow\infty}{\lim}\langle \xi, U_{\Phi,\varphi}^n \psi \rangle=0 \qquad \text{and} \qquad \underset{n\rightarrow\infty}{\lim}\langle \xi, U_{\Phi,\varphi}^{*n} \psi \rangle=0
\end{equation}
We give a simple statement on the Hilbert spaces $\mathcal H_\infty$ and $\mathcal H_0$: 
\begin{proposition}
\label{s-conv}
If  $\Phi^n(d_\bot)\rightarrow 0$  $ [ \Phi^{\sharp n}(d_\bot)\rightarrow 0 ] $ \ as \ $n\rightarrow \infty$ in $s$-topology for all $d_\bot\in\mathfrak{D}_\infty^{\perp_\varphi}$; then  $\mathcal{H}_\infty=\mathcal{H}_0$ and $V_+ = P_\infty$ $ [ V_- = P_\infty ]$
\end{proposition}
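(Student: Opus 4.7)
The plan is to translate the $s$-topology hypothesis on $\Phi^n$ into a norm statement for the contraction $U_{\Phi,\varphi}^n$ on the subspace $\mathcal K_\infty$, and then use the Sz.-Nagy--Foias description of $\mathcal H_0$ as the unitary core of $U_{\Phi,\varphi}$ (Theorem \ref{Sz-Nagy and Fojas}) to collapse $\mathcal H_0$ onto $\mathcal H_\infty$.

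First I would observe that $s$-convergence $\Phi^n(d_\bot)\to 0$ in $\mathfrak M$ entails in particular $\varphi\bigl(\Phi^n(d_\bot)^*\Phi^n(d_\bot)\bigr)\to 0$, since the $s$-topology is generated by the seminorms $a\mapsto\omega(a^*a)^{1/2}$ as $\omega$ ranges over normal states. By the intertwining relation (\ref{contrazione 1}) this is exactly $\|U_{\Phi,\varphi}^n\pi_\varphi(d_\bot)\Omega_\varphi\|\to 0$. Because $\mathcal K_\infty$ is the closed linear span of the vectors $\pi_\varphi(d_\bot)\Omega_\varphi$ with $d_\bot\in\mathfrak D_\infty^{\perp_\varphi}$ and the operators $U_{\Phi,\varphi}^n$ are uniformly bounded by $1$, a standard $\varepsilon/3$ density argument extends this to $\|U_{\Phi,\varphi}^n\psi\|\to 0$ for every $\psi\in\mathcal K_\infty$.

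Next I would show that $\mathcal H_0\cap\mathcal K_\infty=\{0\}$, which combined with the already established inclusion $\mathcal H_\infty\subset\mathcal H_0$ forces $\mathcal H_0=\mathcal H_\infty$. By Theorem \ref{Sz-Nagy and Fojas}, $U_{\Phi,\varphi}$ restricted to $\mathcal H_0$ is unitary, so $\|U_{\Phi,\varphi}^n\xi\|=\|\xi\|$ for every $\xi\in\mathcal H_0$ and every $n\in\mathbb N$; comparison with the previous step forces $\xi=0$ whenever $\xi\in\mathcal H_0\cap\mathcal K_\infty$. Given any $\xi\in\mathcal H_0$ I would then decompose $\xi=\xi_\|+\xi_\bot$ via Proposition \ref{teo-dec2}(f); since $\xi_\|\in\mathcal H_\infty\subset\mathcal H_0$, the difference $\xi_\bot=\xi-\xi_\|$ lies in $\mathcal H_0\cap\mathcal K_\infty=\{0\}$, and hence $\xi=\xi_\|\in\mathcal H_\infty$.

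For the identification of the strong limit with $P_\infty$, I would split again along $\mathcal H_\varphi=\mathcal H_\infty\oplus\mathcal K_\infty$. On $\mathcal H_\infty=\mathcal H_0$ the contraction $U_{\Phi,\varphi}$ is unitary, so $U_{\Phi,\varphi}^{*n}U_{\Phi,\varphi}^n=I$ (and symmetrically $U_{\Phi,\varphi}^nU_{\Phi,\varphi}^{*n}=I$) there, and the relevant strong limit coincides with the identity. On $\mathcal K_\infty$, the estimate $\|U_{\Phi,\varphi}^{*n}U_{\Phi,\varphi}^n\psi\|\le\|U_{\Phi,\varphi}^n\psi\|$ together with the first step forces this limit to vanish. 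Combining the two pieces yields $V_+=P_\infty$ as stated. The dual case is symmetric: the hypothesis $\Phi^{\sharp n}(d_\bot)\to 0$ gives $\|U_{\Phi,\varphi}^{*n}\psi\|\to 0$ on $\mathcal K_\infty$ via $U_{\Phi^\sharp,\varphi}=U_{\Phi,\varphi}^*$, delivering $V_-=P_\infty$ by exactly the same argument.

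The main subtlety is the second step: the crucial point is that $\mathcal H_0$ is the maximal subspace on which every power of $U_{\Phi,\varphi}$ acts isometrically, so even a one-sided contractive collapse of $U_{\Phi,\varphi}^n$ on $\mathcal K_\infty$ is already enough to exclude a nontrivial intersection with $\mathcal H_0$. The first and third steps are essentially bookkeeping inside the two orthogonal decompositions of $\mathcal H_\varphi$ produced by Proposition \ref{teo-dec2}(f) and by Theorem \ref{Sz-Nagy and Fojas}.
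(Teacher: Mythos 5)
Your proposal is correct and follows essentially the same route as the paper: both translate the $s$-convergence hypothesis into $\|U_{\Phi,\varphi}^n\psi\|\to 0$ for $\psi\in\mathcal K_\infty$ by a density argument, both kill the $\mathcal K_\infty$-component of a vector $\xi_0\in\mathcal H_0$ by exploiting that $U_{\Phi,\varphi}^n$ acts isometrically on $\mathcal H_0\supset\mathcal H_\infty$ (your phrasing $\mathcal H_0\cap\mathcal K_\infty=\{0\}$ is just a cleaner packaging of the paper's norm computation), and both obtain $V_+=P_\infty$ by evaluating $U_{\Phi,\varphi}^{*n}U_{\Phi,\varphi}^n$ separately on the two summands of $\mathcal H_\varphi=\mathcal H_\infty\oplus\mathcal K_\infty$. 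Your explicit treatment of the dual case via $U_{\Phi^\sharp,\varphi}=U_{\Phi,\varphi}^*$ is a detail the paper leaves implicit, but it is not a different method.
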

\begin{proof}
We observe that for any $\psi\in\mathcal{K}_\infty$ result  $||U_{\Phi,\varphi}^n \psi||\rightarrow 0$ \ as $n\rightarrow \infty$, because for any $k\in\mathbb N$ there is $d_k^\bot\in\mathfrak{D}_\infty^{\perp_\varphi}$ such that $|| \psi - \pi_\varphi(d_k^\bot)\Omega_\varphi|| < 1/k$ and $U_{\Phi,\varphi}^n$ is a linear contraction so for all natural number $n$ we obtain:
$$||U_{\Phi,\varphi}^n \psi|| < \frac{1}{k}+\varphi(\Phi^n(d_k^\bot)^* \Phi^n(d_k^\bot))$$
If $\xi_0\in\mathcal{H}_0$, we can write $\xi_0=\xi_\| + \xi_\bot$ with $\xi_\| \in\mathcal{H}_\infty$ and $\xi_\bot\in\mathcal{K}_\infty$. 
\\
Then $\xi_\bot=\xi_0 - \xi_\| \in \mathcal H_0$ therefore 
$$|| \xi_\| || + || \xi_\bot|| = || U_{\Phi, \varphi}^n \xi_0 ||=  ||U_{\Phi, \varphi}^n \xi_\| +  U_{\Phi, \varphi}^n \xi_\bot ||= ||\xi_\| || +  ||U_{\Phi, \varphi}^n \xi_\bot || $$
for all natural numbers $n$ it follows that $\xi_\bot =0$. 
\\
Moreover for any $\xi\in\mathcal{H}_\varphi$ we have  $U_{\Phi,\varphi}^{n *} U_{\Phi,\varphi}^n \xi= \xi_0 + U_{\Phi,\varphi}^{n *} U_{\Phi,\varphi}^n\xi_1$ with $\xi_i\in\mathcal H_i$ for $i=1,2$ and 
$V_+ \xi=\xi_0$ since $||U_{\Phi,\varphi}^n \xi_1||\rightarrow 0$ \ as $n\rightarrow \infty$. 
\end{proof}
We conclude this section with a simple observation: 
\\
We recall that a dynamical system $\{ \mathfrak{M}, \Phi, \omega \}$ is mixing if
\begin{equation}
\lim_{n\rightarrow \infty} \varphi(a\Phi^n(b))=\varphi(a) \varphi(b) \ , \qquad a,b\in\mathfrak M
\end{equation}
 by the relation (\ref{limitedebole}) we obtain that $\{ \mathfrak{M}, \Phi, \omega \}$ is mixing if, and only if its reversible part $(\mathfrak{D}_{\infty},\Phi_{\infty},\varphi_{\infty})$ is mixing.
\\
Furthermore, let $\{ \mathfrak{M}, \Phi, \omega \}$ be a mixing Abelian dynamical system, then there is a measurable dynamics space 
$(X, \mathcal A, \mu,  T)$ such that $\mathfrak{D}_{\infty}$  is isomorphic to the von Neumann algebra $L^\infty(X, \mathcal A, \mu)$ of the measurable bounded function on $X$. If the set $X$ is a metric space and $ \varphi_{\infty}$ is the unique staionary state of $\mathfrak{D}_{\infty}$ for the dynamics $\Phi_{\infty}$, then by the corollary 4.3 of \cite{fidaleo} we have $\mathfrak{D}_{\infty}=\mathbb C 1$. 
\section{ Decomposition theorem and Cesaro mean }
In this section we will study the link between the decomposition theorem \ref{teo-dec} and some ergodic results which we recall briefly. 
\\
It is well known the following proposition (see e.g. \cite{krengel} par. 9.1 and \cite{mohari} proposition 2.3). 
\begin{proposition}
\label{ec1}
Let $\{ \mathfrak{M}, \tau, \omega \}$ be a quantum dynamical system. We consider the Cesaro mean 
\begin{equation*}
s_n=\frac{1}{n+1}\sum_{k=0}^n{\tau^k},
\end{equation*}
Then, there is an $\omega$-conditional expectation $\mathcal E$ of  $\mathfrak{M}$ onto fixed point $\mathcal{F}(\tau)=\left\{a\in\mathfrak M : \tau(a)=a \right\}$ such that 
\[
\lim_{n\rightarrow 0} ||\phi \circ s_n -\phi \circ \mathcal E ||=0 \qquad \phi\in\mathfrak{M}_*
\]
\end{proposition}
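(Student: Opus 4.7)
The strategy is to reduce everything to the classical mean ergodic theorem in the GNS Hilbert space and then transfer the convergence back to the predual.

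First, I would pass to the GNS representation $(\mathcal H_\omega,\pi_\omega,\Omega_\omega)$ and consider the linear contraction $U=U_{\tau,\omega}$ defined by equation (\ref{contrazione 1}). By the classical von Neumann mean ergodic theorem, the Ces\`aro averages
\[
\frac{1}{n+1}\sum_{k=0}^n U^k
\]
converge in the strong operator topology on $\mathcal H_\omega$ to the orthogonal projection $P$ onto $\ker(I-U)$. Since $\{\mathfrak M,\tau,\omega\}$ is a quantum dynamical system, $\tau$ admits a $\omega$-adjoint, so by proposition \ref{prop-NSZ} the contraction $U$ commutes with the modular operators $\Delta_\omega^{it}$ and $J_\omega$. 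Hence $P$ also commutes with the modular group, and by Takesaki's theorem (used in the same way as in the proof of theorem \ref{teo-dec}) there is a unique normal $\omega$-preserving conditional expectation $\mathcal E:\mathfrak M\to\mathcal F(\tau)$ determined by $\pi_\omega(\mathcal E(a))\Omega_\omega=P\pi_\omega(a)\Omega_\omega$ for all $a\in\mathfrak M$.

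Second, I would check that $\mathrm{range}(\mathcal E)=\mathcal F(\tau)$. The inclusion $\mathcal F(\tau)\subset\mathrm{range}(\mathcal E)$ is immediate from $\tau(a)=a\Rightarrow U\pi_\omega(a)\Omega_\omega=\pi_\omega(a)\Omega_\omega$. For the reverse, if $\pi_\omega(\mathcal E(a))\Omega_\omega=U\pi_\omega(\mathcal E(a))\Omega_\omega=\pi_\omega(\tau(\mathcal E(a)))\Omega_\omega$, faithfulness of $\omega$ together with the Schwarz inequality forces $\tau(\mathcal E(a))=\mathcal E(a)$.

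Third, I would lift the Hilbert-space convergence to predual-norm convergence. Using the standard form of $\mathfrak M$ associated with the faithful normal state $\omega$ (or Sakai's theorem), any $\phi\in\mathfrak M_*$ admits a representation $\phi(a)=\langle\xi,\pi_\omega(a)\eta\rangle$ with vectors $\xi,\eta\in\mathcal H_\omega$, so that
\[
(\phi\circ s_n-\phi\circ\mathcal E)(a)=\bigl\langle\xi,\pi_\omega(s_n(a)-\mathcal E(a))\eta\bigr\rangle.
\]
The dual map $T:\phi\mapsto\phi\circ\tau$ is a contraction of the Banach space $\mathfrak M_*$, and its Ces\`aro averages $S_n$ satisfy $S_n\phi=\phi\circ s_n$. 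The strategy is to show norm convergence $S_n\phi\to\phi\circ\mathcal E$ on a norm-dense subset of $\mathfrak M_*$, and then to extend by the uniform contraction bound $\|S_n\|\leq 1$. On functionals of the form $\phi_{\xi,\eta}(a)=\langle\xi,\pi_\omega(a)\eta\rangle$ with $\xi,\eta\in\pi_\omega(\mathfrak M)\Omega_\omega$, the Hilbert-space strong convergence of $U^k$ Ces\`aro means yields exactly the required $\mathfrak M_*$-norm convergence uniformly in $\|a\|\leq 1$, because the estimate reduces to $\|\frac{1}{n+1}\sum U^k\eta'-P\eta'\|\cdot\|\xi'\|$ for suitable vectors.

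The main obstacle is this last step: strong convergence on $\mathcal H_\omega$ only gives pointwise convergence $s_n(a)\to\mathcal E(a)$ in $\sigma$-topology, whereas the statement demands the uniform control $\sup_{\|a\|\leq 1}|\phi(s_n(a)-\mathcal E(a))|\to 0$. The density-plus-contraction argument sketched above is the essential input, and it relies on a good supply of vector functionals in $\mathfrak M_*$ coming from the standard form, together with an $\epsilon/3$ splitting that uses $\|S_n\|\leq 1$ to handle the tail.
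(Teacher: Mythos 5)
Note first that the paper does not actually prove this proposition: it is quoted from the literature (Krengel \S 9.1, Mohari Prop.~2.3), where the argument runs through Yosida's mean ergodic theorem applied to the dual contraction on $\mathfrak M_*$, with the needed weak compactness of the orbits $\{\phi\circ s_n\}$ extracted from domination by the faithful invariant state ($0\leq\phi\leq c\,\omega$ propagates under $\tau$ since $\omega\circ\tau=\omega$). Your GNS route is a genuinely different and viable strategy, but as written it has two concrete gaps. First, defining $\mathcal E$ by $\pi_\omega(\mathcal E(a))\Omega_\omega=P\pi_\omega(a)\Omega_\omega$ presupposes that $P\pi_\omega(a)\Omega_\omega$ lies in $\pi_\omega(\mathfrak M)\Omega_\omega$, which is only a dense (not closed) subspace; your second paragraph on $\mathrm{range}(\mathcal E)=\mathcal F(\tau)$ is circular because it assumes $\mathcal E(a)$ already exists in $\mathfrak M$. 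The repair needs $\sigma$-weak compactness: $\|s_n(a)\|\leq\|a\|$, so a subnet $s_{n_j}(a)\to x$ $\sigma$-weakly, normality gives $\pi_\omega(x)\Omega_\omega=P\pi_\omega(a)\Omega_\omega$, and $U\pi_\omega(x)\Omega_\omega=\pi_\omega(x)\Omega_\omega$ forces $\tau(x)=x$ because $\Omega_\omega$ is separating. This simultaneously proves the identification $\ker(I-U)=\overline{\pi_\omega(\mathcal F(\tau))\Omega_\omega}$, without which Takesaki's expectation cannot be matched with $P$. You also use Takesaki for the subalgebra $\mathcal F(\tau)$ without noting that for a Schwartz map the fixed points form a von Neumann algebra only thanks to faithfulness of $\omega$ (for $\tau(a)=a$ one gets $\omega\bigl(\tau(a^*a)-\tau(a^*)\tau(a)\bigr)=0$, hence $\tau(a^*a)=a^*a$), with modular invariance coming from Proposition~\ref{prop-NSZ}.

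Second, and more seriously, the uniform estimate fails for the dense class you chose. For $\phi(x)=\langle\xi,\pi_\omega(x)\pi_\omega(b)\Omega_\omega\rangle$ with $b\in\mathfrak M$ not fixed by $\tau$, you have $\phi(\tau^k(a))=\langle\xi,\pi_\omega(\tau^k(a)\,b)\Omega_\omega\rangle$, and $\tau^k(a)b\neq\tau^k(ab)$: the contraction $U$ implements $\tau^k$ only on vectors of the form $\pi_\omega(x)\Omega_\omega$, so there is no fixed vector $\xi'$ with $\phi(\tau^k(a))=\langle U^{*k}\xi',\pi_\omega(a)\Omega_\omega\rangle$, and the claimed reduction to $\|\frac{1}{n+1}\sum U^k\eta'-P\eta'\|\,\|\xi'\|$ breaks down for $\eta\in\pi_\omega(\mathfrak M)\Omega_\omega$. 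The correct dense class has the second leg in the \emph{commutant} orbit: for $c'\in\pi_\omega(\mathfrak M)'$ one gets $\langle\xi,\pi_\omega(s_n(a))c'\Omega_\omega\rangle=\langle s_n(U)^*c'^*\xi,\pi_\omega(a)\Omega_\omega\rangle$, hence $\|\phi\circ s_n-\phi\circ\mathcal E\|\leq\|(s_n(U^*)-P)c'^*\xi\|\to 0$ by the mean ergodic theorem for $U^*$ (fixed vectors of $U$ and $U^*$ coincide for contractions). Equivalently, work with $\phi_\xi=\langle\xi,\pi_\omega(\cdot)\Omega_\omega\rangle$, $\xi\in\mathcal H_\omega$: these are norm dense in $\mathfrak M_*$ precisely because their annihilator in $\mathfrak M$ is $\{a:\pi_\omega(a)\Omega_\omega=0\}=\{0\}$ ($\Omega_\omega$ separating). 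With that substitution your $\epsilon/3$ argument using $\|\phi\circ s_n\|\leq\|\phi\|$ (valid since $s_n$ is unital positive, so $\|s_n\|=1$) closes the proof, and you obtain an argument that is self-contained where the paper merely cites.
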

A simple consequence of the previous proposition is the following remark:
\begin{remark}
$ \left\{ \mathfrak{M}, \tau, \omega \right\}$ is ergodic if, and only if $ \mathcal{F}(\tau)=\mathbb{C}1$
\end{remark}
Let $( \mathfrak{M},\Phi,\varphi)$ be the previous quantum dynamical system, and $\tau_k:\mathfrak{M} \rightarrow \mathfrak{M}$ the Schwartz map defined in (\ref{taumap}), we have a simple statement:
\begin{proposition}
For each integer $k$ we obtain:
\[
\mathcal{F}(\tau_k)=\mathfrak{D}_{\Phi_k}
\]
\end{proposition}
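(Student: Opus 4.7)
The plan is to prove the two inclusions separately, each using Schwartz's inequality combined with faithfulness of $\varphi$. The preliminary observation is that $\Phi_{-k}$ is precisely the $\varphi$-adjoint of $\Phi_k$: for $k\geq 0$ this is the iteration $(\Phi^k)^\sharp=\Phi^{\sharp k}$, and for $k<0$ one uses $(\Phi^\sharp)^\sharp=\Phi$ symmetrically. Both $\Phi_k$ and $\Phi_{-k}$ are unital Schwartz maps preserving $\varphi$, so their composition $\tau_k$ is also a $\varphi$-preserving unital Schwartz map with $\tau_k(a^*)=\tau_k(a)^*$.

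For the inclusion $\mathfrak{D}_{\Phi_k}\subset\mathcal{F}(\tau_k)$, take $a\in\mathfrak{D}_{\Phi_k}$ and any $b\in\mathfrak{M}$, and compute
\[
\varphi(b\,\tau_k(a)) \;=\; \varphi\bigl(b\,\Phi_{-k}(\Phi_k(a))\bigr) \;=\; \varphi\bigl(\Phi_k(b)\Phi_k(a)\bigr) \;=\; \varphi\bigl(\Phi_k(ba)\bigr) \;=\; \varphi(ba),
\]
using the $\varphi$-adjoint relation in the second step, the multiplicative-domain hypothesis on $a$ in the third, and $\varphi\circ\Phi_k=\varphi$ in the last. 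Since $b$ is arbitrary and $\varphi$ is faithful, $\tau_k(a)=a$.

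For the reverse inclusion, let $\tau_k(a)=a$. Two successive Schwartz inequalities give
\[
\tau_k(a^*a) \;\geq\; \Phi_{-k}\bigl(\Phi_k(a)^*\Phi_k(a)\bigr) \;\geq\; \Phi_{-k}(\Phi_k(a))^*\Phi_{-k}(\Phi_k(a)) \;=\; \tau_k(a)^*\tau_k(a) \;=\; a^*a.
\]
Evaluating $\varphi$ on the outer terms and using $\varphi\circ\tau_k=\varphi$ shows that both positive gaps have zero $\varphi$-expectation. On the first gap, $\varphi$-invariance of $\Phi_{-k}$ reduces this to $\varphi\bigl(\Phi_k(a^*a)-\Phi_k(a)^*\Phi_k(a)\bigr)=0$; faithfulness together with positivity of this element then forces $\Phi_k(a^*a)=\Phi_k(a)^*\Phi_k(a)$. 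Since $\tau_k(a^*)=\tau_k(a)^*=a^*$, the same argument applied to $a^*$ gives $\Phi_k(aa^*)=\Phi_k(a)\Phi_k(a^*)$, hence $a\in\mathfrak{D}_{\Phi_k}$.

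The whole argument is essentially routine; the main thing to keep track of is that Schwartz's inequality is applied to a positive element at each of the two steps, so that the successive collapse of $\varphi$-expectation gaps propagates back to pointwise operator equality. No ingredient beyond Schwartz's inequality, $\varphi$-invariance of $\Phi_{\pm k}$, and faithfulness of $\varphi$ is needed.
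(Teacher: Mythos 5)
Your proposal is correct and takes essentially the same route as the paper: the paper's argument for $\mathcal{F}(\tau_1)\subset\mathfrak{D}_{\Phi}$ is the chain $\varphi(\Phi(x^*)\Phi(x))=\varphi(x^*\tau_1(x))=\varphi(x^*x)=\varphi(\Phi(x^*x))$ followed by Schwartz's inequality and faithfulness (your two-step collapse of positive gaps is the same mechanism, just spelled out), and its unstated ``converse is proved similarly'' is precisely your duality computation with arbitrary $b$. Your only additions are making explicit that $\Phi_{-k}$ is the $\varphi$-adjoint of $\Phi_k$ (so $(\Phi^\sharp)^\sharp=\Phi$, implicit in the paper via Proposition \ref{prop-NSZ}) and the passage from $a$ to $a^*$, both of which the paper elides.
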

\begin{proof}
Without loss of generality we assume $k=1$ then $\tau_1=\Phi^\sharp \circ \Phi$.\\
If $x\in\mathcal{F}(\tau_1)$ we can write $\varphi(\Phi(x^*) \Phi(x))=\varphi(x^*\tau_1(x))=\varphi(x^*x)=\varphi(\Phi(x^*x))$ then $x\in\mathfrak{D}_\Phi$. The converse is proved similarly.
\end{proof}
Now let us ask when the algebra of effectives observables $\mathfrak{D}_\infty$ is trivial (see also \cite{carbone2} proposition 15) . 
\begin{proposition}
If $\mathcal{D}_\infty=\mathbb{C}1$ then the normal state $\varphi$ is of asymptotic equilibrium and the quantum dynamical system $( \mathfrak{M},\Phi,\varphi)$ is ergodic.
\end{proposition}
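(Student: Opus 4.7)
The plan is to reduce both claims to results already established in the excerpt, using the decomposition $\mathfrak M = \mathbb C 1 \oplus \ker\varphi$ that follows from $\mathfrak D_\infty = \mathbb C 1$ (as noted immediately after the definition of \emph{completely irreversible}, with $\mathfrak D_\infty^{\perp_\varphi} = \ker\varphi$).

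First I would dispatch asymptotic equilibrium. Given $a\in\mathfrak M$, decompose $a = \varphi(a)\,1 + a_\bot$ with $a_\bot\in\ker\varphi = \mathfrak D_\infty^{\perp_\varphi}$. Since $\Phi(1)=1$, we have $\Phi^n(a) = \varphi(a)\,1 + \Phi^n(a_\bot)$. The key input is the proposition proved just before (the one stating that for $d_\bot\in\mathfrak D_\infty^{\perp_\varphi}$, $\Phi^n(d_\bot)\to 0$ in $\sigma$-topology); applied to $a_\bot$, this gives $\Phi^n(a)\to \varphi(a)\,1$ in $\sigma$-topology, i.e.\ $\omega(\Phi^n(a))\to \omega(1)\varphi(a)$ for every $\omega\in\mathfrak M_*$. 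This is exactly the asymptotic equilibrium condition stated in Section~1.

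For ergodicity, the cleanest route is to invoke the proposition established earlier that the full system $(\mathfrak M,\Phi,\varphi)$ is ergodic if and only if its reversible part $(\mathfrak D_\infty,\Phi_\infty,\varphi_\infty)$ is ergodic. When $\mathfrak D_\infty = \mathbb C 1$, the reversible part reduces to the trivial dynamical system on $\mathbb C 1$, which is ergodic tautologically: the Ces\`aro mean of $\varphi_\infty(a_\|\Phi_\infty^k(b_\|)) - \varphi_\infty(a_\|)\varphi_\infty(b_\|)$ is identically zero because $a_\| = \varphi(a)\,1$ and $b_\| = \varphi(b)\,1$.

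Alternatively, one can give a direct argument without quoting the ergodicity transfer proposition: writing $b = \varphi(b)\,1 + b_\bot$, one has $\varphi(a\Phi^k(b)) - \varphi(a)\varphi(b) = \varphi(a\,\Phi^k(b_\bot))$, and since $x\mapsto \varphi(a\,x)$ is a normal functional, the $\sigma$-convergence $\Phi^k(b_\bot)\to 0$ yields $\varphi(a\Phi^k(b_\bot))\to 0$ as $k\to\infty$. Any sequence converging to zero has Ces\`aro mean tending to zero, which is ergodicity. I do not expect any real obstacle: both halves are short consequences of the $\sigma$-decay of $\Phi^n$ on $\mathfrak D_\infty^{\perp_\varphi}$, which is the substantive result already in hand. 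The only minor point to verify cleanly is the passage from pointwise $\sigma$-decay to vanishing of the Ces\`aro mean in the definition of ergodicity, which is immediate from boundedness of $\|\Phi^k(b_\bot)\|\leq \|b_\bot\|$ and normality of $\varphi(a\,\cdot\,)$.
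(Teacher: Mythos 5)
Your proposal is correct and follows essentially the same route as the paper: the decomposition $a=\varphi(a)1+a_\bot$ with $a_\bot\in\mathfrak D_\infty^{\perp_\varphi}=\ker\varphi$, so that $\Phi^n(a)=\varphi(a)1+\Phi^n(a_\bot)$, combined with the $\sigma$-decay $\Phi^n(a_\bot)\to 0$ established earlier, which is exactly the paper's proof of asymptotic equilibrium. Your explicit treatment of ergodicity (either via the transfer proposition or the direct observation that $\varphi(a\Phi^k(b))-\varphi(a)\varphi(b)=\varphi(a\Phi^k(b_\bot))\to 0$, hence the Ces\`aro means vanish) merely spells out what the paper leaves implicit, and is sound.
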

\begin{proof}
By decomposition theorem $\mathfrak{M}=\mathbb{C}1 \oplus\mathfrak{D}_\infty^{\perp_\varphi}$ and for each $a\in\mathfrak{M}$  we have $a=\varphi(a)1+a_\perp$ with $a_\perp\in\mathfrak{D}_\infty^{\perp_\varphi}$. 
It follows that 
$$\Phi^n(a)= \varphi(a) 1 + \Phi^n(a_\perp)$$
and  $\Phi^n(a_\perp)\rightarrow 0$ in $\sigma$-top.
\end{proof}
We have a simple consequence of the previous propositions:
\begin{corollary}
If the quantum dynamical system $\{ \mathfrak{M}, \tau_k, \varphi \}$ is ergodic for some integer $k$, then $\mathfrak{D}_{\infty}=\mathbb{C}1$.
\end{corollary}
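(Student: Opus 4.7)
The plan is to chain together the two immediately preceding results: the characterization of ergodicity via fixed points (the remark after Proposition \ref{ec1}) and the identification $\mathcal{F}(\tau_k)=\mathfrak{D}_{\Phi_k}$ (the proposition just above). Once these are combined, the conclusion follows from the very definition of $\mathfrak{D}_\infty$ as an intersection indexed over all integers.

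More concretely, I would proceed as follows. First, assume there exists an integer $k$ such that $\{\mathfrak{M},\tau_k,\varphi\}$ is ergodic. Note that $\tau_k=\Phi_{-k}\circ\Phi_k$ is a unital Schwartz map with $\varphi\circ\tau_k=\varphi$, and $\varphi$ is a faithful normal state, so $\{\mathfrak{M},\tau_k,\varphi\}$ is a genuine quantum dynamical system (even with $\tau_k=\tau_k^\sharp$) to which Proposition \ref{ec1} and the subsequent remark apply. Ergodicity therefore gives $\mathcal{F}(\tau_k)=\mathbb{C}1$.

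Second, invoke the preceding proposition to rewrite the fixed point algebra as the multiplicative domain: $\mathcal{F}(\tau_k)=\mathfrak{D}_{\Phi_k}$. Combining this with the previous step yields $\mathfrak{D}_{\Phi_k}=\mathbb{C}1$.

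Finally, since by definition
\[
\mathfrak{D}_\infty=\bigcap_{j\in\mathbb{Z}}\mathfrak{D}_{\Phi_j}\subset\mathfrak{D}_{\Phi_k},
\]
we get $\mathfrak{D}_\infty\subset\mathbb{C}1$, and since $1\in\mathfrak{D}_\infty$ this forces $\mathfrak{D}_\infty=\mathbb{C}1$. There is no real obstacle here: the corollary is essentially a one-line consequence of the two results quoted just above, and the only thing to double-check is that the chosen $k$ can indeed be any integer (the argument works for $k<0$ as well, using that $\tau_k=\Phi_{-k}\circ\Phi_k=\Phi^{|k|}\circ\Phi^{\sharp|k|}$ and that $\mathfrak{D}_\infty\subset\mathfrak{D}_{\Phi_k}$ for negative $k$ by the intersection defining $\mathfrak{D}_\infty$).
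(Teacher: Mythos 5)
Your proposal is correct and is essentially identical to the paper's own (one-line) proof: ergodicity gives $\mathcal{F}(\tau_k)=\mathbb{C}1$ by the remark following Proposition on Cesaro means, the preceding proposition identifies $\mathcal{F}(\tau_k)=\mathfrak{D}_{\Phi_k}$, and the inclusion $\mathfrak{D}_{\infty}\subset\mathfrak{D}_{\Phi_k}$ then forces $\mathfrak{D}_{\infty}=\mathbb{C}1$. Your additional verifications (that $\{\mathfrak{M},\tau_k,\varphi\}$ is a genuine quantum dynamical system with $\tau_k=\tau_k^{\sharp}$, and that the argument covers negative $k$) are sound details the paper leaves implicit.
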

\begin{proof}
If we have ergodicity then $\mathcal{F}(\tau_k)=\mathfrak{D}_{\Phi_k}=\mathbb{C}1$.
\end{proof}
Summarizing 
$$\tau_1 \ ergodic \quad  \Longrightarrow  \quad \Phi \  completely \ irreversible \quad \Longrightarrow  \quad \Phi \  ergodic$$
We observe that if  $( \mathfrak{M},\Phi,\varphi)$ is a quantum dynamical system with $\Phi$ homomorphism, we have that $\tau_1=\Phi^\sharp \circ \Phi=id$. Hence the dynamical system $\{ \mathfrak{M}, \tau_1, \varphi \}$ is not ergodic (if $\varphi$ is not multiplicative functional), while $( \mathfrak{M},\Phi,\varphi)$ can be.
\\

For each integer $k$ we consider $S_{n,k}=\frac{1}{n+1}\sum \limits_{j=0}^{n} \tau^j_k$.
\\
By previous proposition \ref{ec1} there is a positive map $\mathcal E_k:\mathfrak{M} \rightarrow \mathfrak{M}$ such that
\[
   ||\phi \circ S_{n,k} -\phi \circ \mathcal E_k|| \rightarrow 0 \qquad \phi\in\mathfrak{M}_* 
\]
and $\mathcal E_k$ is the conditional expectation related of von Neumann algebra $\mathcal{D}_{\Phi_k}$ of theorem \ref{teo-dec}.
\\
Therefore $\mathcal E_k :\mathfrak M \rightarrow \mathcal{D}_{\Phi^k}$  and $\varphi \circ \mathcal E_k= \varphi$ for all integers number $k$.
\\
Furthermore we have: 
$$ \mathcal E_h \circ \mathcal E_k= \mathcal E_k \qquad k\geq h \geq 0 $$  
because by the relation \ref{inclusion} we have $\mathcal{D}_{\Phi^k}\subset \mathcal{D}_{\Phi^h}$  for all $k\geq h$.
\\
For each $a\in\mathfrak M$ we have $||\mathcal E_k (a)||\leq ||a||$ for all integers $k$ and apply $\sigma$-compactness property for the bounded net  $ \left\{ \mathcal E_{k} (a) \right\}_{k\in\mathbb N}$ of von Neumann algebra $\mathfrak M$,  we obtain that there is at lest one $\sigma$-limit point $\mathcal E_+ (a)$, therefore there exist a net $ \left\{ \mathcal E_{n_\alpha} (a) \right\}_\alpha$ such that $\mathcal E_+(a)=\sigma-\lim_\alpha \mathcal E_{n_\alpha}(a)$.
\\
We obtain that $\mathcal E_+(a)\in\mathcal{D}_{\Phi^k}$ for all natural number $k$ because for any $a\in\mathfrak M$ we have 
$\mathcal E_h (\mathcal E_{n_\alpha}(a))= \mathcal E_{n_\alpha}(a)$
 when  $n_\alpha\geq h$ and since $\mathcal E_h$ are normal maps follows that
$$\mathcal E_h (  \mathcal E_+(a) ) = \mathcal E_+(a)$$
for all  natural number $h$. 
\\
Furthermore, for any $x\in\mathfrak D^+_\infty$ we have
$$\varphi(xa)=\lim_{\alpha\rightarrow \infty} \varphi (\mathcal E_{n_\alpha}(x a))= \lim_{\alpha\rightarrow \infty} \varphi (x \mathcal E_{n_\alpha}(a)) = \varphi(x  \mathcal E_+(a))$$
it follows that we have a unique $\sigma$-limit point $\mathcal E_+ (a)$ for the net  $ \left\{ \mathcal E_{n} (a) \right\}_{n\in\mathbb N}$.
\\ 
Therefore we obtain a map $\mathcal E_+ :\mathfrak M \rightarrow  \mathfrak{D}_\infty ^+$.
\\
Moreover $\mathcal E_{n_\alpha}( \mathcal E_+(a)) = \mathcal E_+(a)$ for all $\alpha$, then $ \mathcal E_+ ^2 = \mathcal E_+ $ and for Tomiyama \cite{tomiyama57} the positive map $\mathcal E_+$ is a conditional expectation such that $\varphi \circ \mathcal E_+=\varphi$, precisely it is the conditional expectation of relation \ref{teo-dec1}.
\\
We can say something more:
\begin{proposition}
Let $\{ \mathfrak{M}, \Lambda_k, \varphi \}_{k\in\mathbb N}$ be a family of quantum dynamical systems. We consider the contraction   $V_k:\mathcal H_\varphi \rightarrow \mathcal H_\varphi$ defined in (\ref{contrazione 1}) related to Schwartz map $\Lambda_k $:
$$ V_k \pi_\varphi (a)\Omega_\varphi=\pi_\varphi(\Lambda_k (a))\Omega_\varphi \qquad a\in\mathfrak M$$
If $|| \left[ V_k^* - V_h ^* \right]\xi||\rightarrow 0 $ as $h,k\rightarrow \infty$ for all $\xi\in\mathcal H_\varphi$, then there is a unital positive map $\Lambda:\mathfrak M \rightarrow \mathfrak M$ such that 
\begin{equation}
\label{mohari1}
|| \phi \circ \Lambda_k - \phi \circ \Lambda ||\rightarrow 0
\end{equation}
as $k\rightarrow \infty$ for any $\phi\in\mathfrak M_*$ with
$$ \varphi(\Lambda(a^*) \Lambda(a))\leq\varphi(a^*a) \qquad  a\in\mathfrak M$$
and $\varphi \circ \Lambda = \varphi$.
\end{proposition}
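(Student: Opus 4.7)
The strategy is to construct $\Lambda$ on the predual side. Since each $V_k^*$ is a contraction and the hypothesis says $\{V_k^*\}$ is strongly Cauchy, there exists $W \in \mathcal B(\mathcal H_\varphi)$ with $\|W\|\le 1$ and $V_k^* \to W$ strongly; set $V := W^*$, so that $V_k \to V$ in the weak operator topology. I want $\Lambda$ defined implicitly by $\phi(\Lambda(a)) := \lim_k \phi(\Lambda_k(a))$, which requires showing that $\{\phi \circ \Lambda_k\}$ is Cauchy in the predual norm for every $\phi \in \mathfrak M_*$.

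\textbf{Key estimate.} Recall that $\pi_\varphi(\mathfrak M)$ sits in standard form on $\mathcal H_\varphi$ with $\Omega_\varphi$ cyclic and separating, so $\pi_\varphi(\mathfrak M)'\Omega_\varphi$ is dense in $\mathcal H_\varphi$ and the functionals $\omega_{y_1\Omega_\varphi,\,y_2\Omega_\varphi}$ with $y_1,y_2 \in \pi_\varphi(\mathfrak M)'$ form a norm-dense subset of $\mathfrak M_*$. For such a $\phi$, using that $y_i$ commutes with $\pi_\varphi(\mathfrak M)$,
\[
\phi(\Lambda_k(a)) \;=\; \bigl\langle y_2 \Omega_\varphi,\, \pi_\varphi(\Lambda_k(a))\, y_1 \Omega_\varphi\bigr\rangle \;=\; \bigl\langle V_k^* y_1^* y_2 \Omega_\varphi,\, \pi_\varphi(a)\Omega_\varphi\bigr\rangle ,
\]
and Cauchy--Schwarz combined with $\|\pi_\varphi(a)\Omega_\varphi\| \le \|a\|$ yields
\[
\sup_{\|a\|\le 1}\bigl|(\phi\circ\Lambda_k - \phi\circ\Lambda_h)(a)\bigr| \;\le\; \bigl\|(V_k^* - V_h^*)\,y_1^* y_2 \Omega_\varphi\bigr\| \;\longrightarrow\; 0 .
\]
The uniform bound $\|\phi\circ\Lambda_k\|\le\|\phi\|$ (each $\Lambda_k$ is unital and positive, hence norm-contractive) together with an $\varepsilon/3$ argument extends the Cauchy property to every $\phi \in \mathfrak M_*$. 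Denoting by $T:\mathfrak M_* \to \mathfrak M_*$ the pointwise-norm limit of $\phi \mapsto \phi\circ\Lambda_k$, I set $\Lambda := T^* : \mathfrak M \to \mathfrak M$; this adjoint is automatically $\sigma$-weakly continuous, is a contraction, and realises $\phi(\Lambda(a)) = \lim_k \phi(\Lambda_k(a))$ as desired.

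\textbf{Properties of $\Lambda$.} Unitality, positivity and $\varphi$-invariance pass to the limit immediately: apply the limit relation to arbitrary $\phi$, to positive $\phi$ with $a\ge 0$, and to $\phi = \varphi$ respectively. For the Schwartz-type inequality, restrict the limit relation to $\phi = \omega_{y\Omega_\varphi,\Omega_\varphi}$ with $y$ varying in $\pi_\varphi(\mathfrak M)'$; since $\pi_\varphi(\mathfrak M)'\Omega_\varphi$ is dense in $\mathcal H_\varphi$, this forces $\pi_\varphi(\Lambda(a))\Omega_\varphi = V\pi_\varphi(a)\Omega_\varphi$, and therefore
\[
\varphi(\Lambda(a)^*\Lambda(a)) = \|\pi_\varphi(\Lambda(a))\Omega_\varphi\|^2 = \|V\pi_\varphi(a)\Omega_\varphi\|^2 \le \|\pi_\varphi(a)\Omega_\varphi\|^2 = \varphi(a^*a) .
\]

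\textbf{Main obstacle.} The hypothesis controls only $V_k^*$, not $V_k$, so a strong limit of the $V_k$'s or pointwise norm convergence of the $\Lambda_k$ on $\mathfrak M$ is not available. The crux is that the Cauchy--Schwarz splitting above confines the entire $k$-dependence to $(V_k^*-V_h^*)$ acting on the fixed vector $y_1^* y_2\Omega_\varphi$, while the $a$-dependence is absorbed into $\pi_\varphi(a)\Omega_\varphi$; this disentanglement is what converts the hypothesised strong Cauchy property of $V_k^*$ into uniformity in $\|a\|\le 1$, which is the precise content of predual-norm convergence.
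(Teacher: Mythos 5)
Your proof is correct, but it takes a genuinely different route from the paper: the paper's entire proof of this proposition is the single sentence ``A simple consequence of proposition 1.1 of \cite{mohari}'', i.e.\ it outsources the result to Mohari's mean ergodic theorem, whereas you supply a complete self-contained argument. Your key steps all check out: since $\varphi$ is faithful and normal, $\Omega_\varphi$ is cyclic and separating, so $\pi_\varphi(\mathfrak M)'\Omega_\varphi$ is dense in $\mathcal H_\varphi$ and the functionals $\omega_{y_1\Omega_\varphi,y_2\Omega_\varphi}$ with $y_i\in\pi_\varphi(\mathfrak M)'$ are norm-dense in $\mathfrak M_*$; the identity $\phi(\Lambda_k(a))=\langle V_k^*\,y_1^*y_2\Omega_\varphi,\pi_\varphi(a)\Omega_\varphi\rangle$ correctly confines all $k$-dependence to a fixed vector, so the hypothesised strong Cauchy property of $V_k^*$ gives predual-norm Cauchyness uniformly over $\|a\|\le 1$; the bound $\|\phi\circ\Lambda_k\|\le\|\phi\|$ is legitimate because the Schwartz inequality gives $\|\Lambda_k(a)\|^2\le\|\Lambda_k(a^*a)\|\le\|a\|^2$, so the $\varepsilon/3$ extension to all of $\mathfrak M_*$ works (norm limits of normal functionals are normal, as $\mathfrak M_*$ is norm-closed in $\mathfrak M^*$); and defining $\Lambda=T^*$ via the Banach adjoint yields $\phi(\Lambda(a))=\lim_k\phi(\Lambda_k(a))$, from which unitality and $\varphi\circ\Lambda=\varphi$ are immediate, while your verification $\pi_\varphi(\Lambda(a))\Omega_\varphi=V\pi_\varphi(a)\Omega_\varphi$ with $V=W^*$, $\|V\|\le 1$, gives $\varphi(\Lambda(a)^*\Lambda(a))=\|V\pi_\varphi(a)\Omega_\varphi\|^2\le\varphi(a^*a)$ exactly as claimed. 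Two small points you should make explicit in a write-up: positivity of $\Lambda$ in the limit rests on the standard fact that $x\in\mathfrak M$ is positive if and only if $\omega(x)\ge 0$ for every normal state $\omega$ (with self-adjointness extracted first from reality of the values); and strictly speaking it is the linear span of the $\omega_{y_1\Omega_\varphi,y_2\Omega_\varphi}$ whose norm-density you use, unless you invoke the standard-form fact that every normal functional is a vector functional. Compared with the paper's bare citation, your argument makes visible that nothing beyond strong convergence of the adjoint contractions is needed, and it actually delivers slightly more than the statement asks for: your $\Lambda$ is automatically $\sigma$-weakly continuous (normal), not merely unital and positive.
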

\begin{proof}
A simple consequence of proposition 1.1 of \cite{mohari} 
\end{proof}
For each natural number $n$, we consider the following Schwartz map:
$$ Z_n = \frac{1}{2n+1}\sum \limits_{k=-n}^{n} \  \tau_k$$
it is obvious that $\varphi$ is a stationary state for $Z_n$ with $\varphi(x Z_n(y))=\varphi(Z_n (x)y)$ for all $x,y\in\mathfrak M$. 
\\
Moreover for each $a\in\mathfrak M$ we have:
\begin{eqnarray*}
 \pi_\varphi(Z_n(a)) \Omega_\varphi  & = & \frac{1}{2n+1}\sum \limits_{k=-n}^{n} \  \pi_\varphi(\tau_k(a))\Omega_\varphi =
\\
& = &
\frac{1}{2n+1}\sum \limits_{k=0}^{n} \  U_{\Phi,\varphi}^{* k} U_{\Phi,\varphi}^n  \pi_\varphi(a))\Omega_\varphi +
\frac{1}{2n+1}\sum \limits_{k=1}^{n} \  U_{\Phi,\varphi}^k U_{\Phi,\varphi}^{* k}  \pi_\varphi(a))\Omega_\varphi
\end{eqnarray*}
and since $U_{\Phi,\varphi}^{* n} U_{\Phi,\varphi}^n \rightarrow V_+$ and  $U_{\Phi,\varphi}^{n} U_{\Phi,\varphi}^{* n}\rightarrow V_-$ in strong operator topology, we obtain 
$$ \pi_\varphi(Z_n(a)) \Omega_\varphi \rightarrow \frac{1}{2}(V_+ + V_-)  \pi_\varphi(a)\Omega_\varphi  $$
It follows that from previous proposition that there is a $\varphi$ invariant Schwartz map $Z:\mathfrak M \rightarrow \mathfrak M$ such that 
\[
   ||\phi \circ Z_n-\phi \circ Z|| \rightarrow 0 \qquad \phi\in\mathfrak{M}_* 
\]
and 
$$ \pi_\varphi(Z(a)) \Omega_\varphi =  \frac{1}{2}(V_+ + V_-)  \pi_\varphi(a)\Omega_\varphi$$
We consider the decomposition $\mathfrak M = \mathfrak D_\infty \oplus \mathfrak D_\infty^{\perp_\varphi}$ for each $a=a_\| + a_\bot \in\mathfrak M$ result
$$Z(a_\|+a_\bot) = a_\|+Z(a_\bot)$$
with $Z(a_\bot)\in\mathfrak D_\infty^{\perp_\varphi}$.
\\
We observe that if  $\Phi^n(d_\bot)\rightarrow 0$   and $ \Phi^{\sharp n}(d_\bot)\rightarrow 0  $ \ as \ $n\rightarrow \infty$ in $s$-topology for all $d_\bot\in\mathfrak{D}_\infty^{\perp_\varphi}$ (see proposition \ref{s-conv}); then  $Z(d_\bot )=0$ for all $d_\bot\in\mathfrak{D}_\infty^{\perp_\varphi}$ and we have a  $\varphi$ invariant Schwartz map $Z:\mathfrak M \rightarrow \mathfrak D_\infty$ such that 
$$ Z(xa)=xZ(a), \qquad x\in\mathfrak M, \ a\in\mathfrak D_\infty $$
It follows that $Z$ is the conditional expectation $\mathcal E_\infty$ of proposition \ref{teo-dec2}.

\end{document}